\renewcommand{\P}{\mathbb{P}}
\newcommand{\E}{\mathbb E}
\newcommand{\R}{\mathbb R}
\newcommand{\Z}{\mathbb Z}
\DeclareMathOperator{\Var}{Var}
\newcommand{\e}{\varepsilon}
\providecommand{\abs}[1]{\vert#1\vert}
\newcommand{\cA}{\mathcal{A}}
\newcommand{\cB}{\mathcal{B}}
\newcommand{\cC}{\mathcal{C}}
\newcommand{\cD}{\mathcal{D}}
\newcommand{\cE}{\mathcal{E}}
\newcommand{\cF}{\mathcal{F }}
\newcommand{\cG}{\mathcal{G}}
\newcommand{\cH}{\mathcal{H}}
\newcommand{\cI}{\mathcal{I}}
\newcommand{\cL}{\mathcal{L}}
\newcommand{\cO}{\mathcal{O}}
\newcommand{\cR}{\mathcal{R}}
\newcommand{\cU}{\mathcal{U}}
\newcommand{\cV}{\mathcal{V}}
\newcommand{\cY}{\mathcal{Y}}
\newcommand{\cZ}{\mathcal{Z}}
\newcommand{\argmax}[1]{\underset{#1}{\mathrm{argmax}}}
\newtheorem{theorem}{Theorem}[section]
\newtheorem{lemma}[theorem]{Lemma}
\newtheorem{proposition}[theorem]{Proposition}
\newtheorem{corollary}[theorem]{Corollary}
\newtheorem{assumption}[theorem]{Assumption}
\newtheorem{definition}[theorem]{Definition}
\theoremstyle{remark}
\newtheorem{remark}[theorem]{Remark}
\def\arr{a}
\def\serv{s}
\def\depa{d}
\def\arrv{\mathbf\arr}
\def\servv{\mathbf\serv}
\def\depav{\mathbf\depa}
\numberwithin{equation}{section}
\numberwithin{figure}{section}
\author{Ofer Busani\thanks{University of Bristol, School of Mathematics, Fry Building, Woodland Rd., Bristol BS8 1UG, UK. E-mail: {\tt o.busani@bristol.ac.uk}} \and Patrik L.\ Ferrari\thanks{Institute for Applied Mathematics, Bonn University, Endenicher Allee 60, 53115 Bonn, Germany. E-mail: {\tt ferrari@uni-bonn.de}}}
\title{Universality of the geodesic tree in last passage percolation}
\date{\today}
\begin{document}

\sloppy

\maketitle
\begin{abstract}
In this paper we consider the geodesic tree in exponential last passage percolation. We show that for a large class of initial conditions around the origin, the line-to-point geodesic that terminates in a cylinder of width $o(N^{2/3})$ and length $o(N)$ agrees in the cylinder, with the stationary geodesic sharing the same end point. In the case of the point-to-point model, we consider width $\delta N^{2/3}$ and length up to $\delta^{3/2} N/(\log(\delta^{-1}))^3$ and provide lower and upper bound for the probability that the geodesics agree in that cylinder.
\end{abstract}

\section{Introduction}
The last passage percolation model (LPP) is one of the most well studied models in the Kardar-Parisi-Zhang (KPZ) universality class of stochastic growth models. In this model, to each site $(i,j)\in\Z^2$, one associates an independent random variable $\omega_{i,j}$ exponentially distributed with parameter one. In the simplest case, the point-to-point LPP model, for a given point $(m,n)$ in the first quadrant, one defines the last passage time as
\begin{equation}\label{eq1}
 G(m,n)=\max_{\pi:(0,0)\to (m,n)} \sum_{(i,j)\in\pi} \omega_{i,j},
\end{equation}
where the maximum is taken over all up-right paths, that is, paths whose incremental steps are either $(1,0)$ or $(0,1)$. Consider the spatial direction $x$ to be $(1,-1)$ and the time direction $t$ to be $(1,1)$. Then one defines a height function $h(x,t=N)=G(N+x,N-x)$, see~\cite{Jo03b} and also~\cite{PS00,PS02} for a continuous analogue related to the Hammersley process~\cite{Ham72}. The height function has been studied extensively: at time $N$, it has fluctuations of order $N^{1/3}$ and non-trivial correlation over distance $N^{2/3}$, which are the KPZ scaling exponents~\cite{KPZ86,KMH92,BKS85}. Furthermore, both the one-point distributions~\cite{BR99b,BR99,PS00,BDJ99,Jo00b,CFS16} as well as the limiting processes~\cite{Jo03b,PS02,BFPS06,BFS07b,BFP09} are known for a few initial conditions (or geometries in LPP framework). Finally, the correlations in time of the interface are non-trivial over macroscopic distances~\cite{Fer08,CFP10b} and they have been recently partially studied~\cite{Jo18,BG18,JR19,BL17,FO18}.

Another very interesting but less studied aspect of models in the KPZ universality class is the geometrical properties of the geodesics (also known as maximizers). For the LPP model, geodesics are the paths achieving the maximum in \eqref{eq1}. In the case of the exponential random LPP described above, for any end-point $(m,n)$, there is a unique geodesic. Geodesics follow characteristic directions and if the end-point is at distance $\cO(N)$ from the origin, then it has spatial fluctuations of order $\cO(N^{2/3})$ with respect to the line joining the origin with the end-point~\cite{Jo00,BSS14}.

Consider two or more end-points. To each of the end-points there is one geodesic from $(0,0)$ and thus the set of geodesics as seen from the end-points in the direction of the origin, have a non-trivial coalescing structure. Some recent studies of this structure in LPP and related models can be found in~\cite{BBS20,BGHH20,Ham16,Pim16,BSS17,Ham17}. One might expect that on a large scale the coalescing structure is universal and thus not depending on the details of the chosen random variables defining the LPP models (provided, of course, that the model is still in the KPZ class, which rules out, for instance, heavy tailed random variable).

The fact that the height function decorrelates over macroscopic times is reflected in the geometrical behaviour of the geodesics. For instance, taking two end-points at distance $\cO(N^{2/3})$ of each other, the coalescence point of the two geodesics will be at distance of order $\cO(N)$ from the end-points~\cite{FS03b} and have a non-trivial distribution over the full macroscopic scale, as already noticed in some numerical studies in~\cite{FerPhD}. More refined recent results are also available~\cite{SS19,Zha19}.

In the study of the covariance of the time-time correlations~\cite{FO18} it was proven that taking one end-point as $(N,N)$ and the second $(\tau N,\tau N)$, then as $\tau\to 1$, the first order correction to the covariance of the LPP is $\cO((1-\tau)^{2/3}N^{2/3})$ and is completely independent of the geometry of the LPP, i.e., it is the same whether one considers the point-to-point LPP as in \eqref{eq1} or the line-to-point LPP, for which the geodesics start from a point on the antidiagonal crossing the origin. This suggests that the coalescing structure of the end-points in $\{(N+k,N-k),|k|\leq \delta N^{2/3}\}$, for a small $\delta>0$, should be independent of the LPP geometry over a time-span $o(N)$ from the end-points. In particular, the coalescing structure should be locally the same as the one from the stationary model, introduced in~\cite{BCS06}. In~\cite{BBS20} a result in this direction has been proven. Among other results, they showed that the tree of point-to-point geodesics starting from every vertex in a box of side length $\delta N^{2/3}$ going to a point at distance $N$ agree inside the box with the tree of  stationary geodesics.

The goal of this work is to improve on previous results in the following points:
\begin{enumerate}
	\item In the case of point-to-point LPP, we extend previous results by showing (Theorem~\ref{thm:coal}) that the coalescence to the stationary geodesics holds with high probability for any geodesic starting in a large box around the origin and terminating in a cylinder whose width is of order $N^{2/3}$ and its length is of order $N$ (see Figure~\ref{Fig:boxes}). In other words, we obtain the correct dimensions of the cylinder around the point $(N,N)$.
	\item In the case of point-to-point LPP, we improve the lower bound of the coalescence result from exponent $3/8$ to the correct exponent $1/2$ (Theorem~\ref{thm:coal}). In the process of proving it, we provide a simple probabilistic proof (Theorem~\ref{thm:locCylinder}) for the concentration of geodesics around their characteristics with the optimal exponential decay.
	\item In the case of point-to-point LPP, we obtain an upper bound on the coalescence event (Theorem~\ref{thm:LBcoal}) that differs from the lower bound only by a logarithmic factor, i.e., we have indeed obtained the correct exponent.
	\item In the case of general initial conditions we obtain (Theorem~\ref{thm:coalGeneralIC}) a lower bound on the probability that the geodesic tree agree with that of stationary one in a cylinder of width $N^{2/3}$ and length $N$. The order of the lower bound depends on the concentration of the exit point of the geodesics around the origin.
\end{enumerate}

Another problem that is closely related to the coalescence of the point-to-point geodesic with the stationary one is the question of coalescence of point-to-point geodesics. More precisely, consider the probability that two infinite geodesics starting $k^{2/3}$ away from each other will coalesce after $R k$ steps. A lower bound of the order $C R^{-c}$ was obtained in~\cite{Pim16}. Matching upper bound together with the identification of the constant $c=-2/3$ was found in~\cite{BSS17}. The analogue result for the point-to-point coalescence was completed  more recently in~\cite{Zha19}. Finally, in~\cite{BBS20} it is proven that the infinite geodesics in fact coalesce with their point-to-point counterparts and identified the polynomial decay obtained in \cite{Zha19}.

In a second type of coalescence results, one considers the probability that geodesics leaving from two points that are located at distance of order $N^{2/3}$ away from each other and terminate at or around $(N,N)$ coalesce. In the setup of Brownian LPP, one takes $k$ geodesics leaving from a small interval of order $\epsilon N^{2/3}$ and terminating at time $N$ in an interval of the same order. Then, the probability that they are disjoint is of order $\epsilon^{(k^2-1)/2}$ with a subpolynomial correction, see~\cite[Theorem 1.1]{Ham17}. It was conjectured there that the lower bound should have the same exponent. For $k=2$ this is proven in \cite[Theorem 2.4]{BGH19}. Furthermore, an upper bound of order $\tau^{2/9}$ on the probability that two geodesics starting from the points $(0,0)$ and $(0,N^{2/3})$ and terminating at $(N,N)$ do not coalesce by time $(1-\tau)N$ is obtained in~\cite[Theorem 2.8]{BBS20}.

Our Theorem~\ref{thm:coal2} gives the exact exponent $1/2$, for the probability that any two geodesics starting from a large box of dimensions of order $N\times N^{2/3}$ and terminating at a common point in a small box of size $N\times N^{2/3}$ coalesce (see Figure~\ref{Fig:boxes} for  more accurate dimensions). Theorem~\ref{thm:coal2} can be compared with rarity of disjoint geodesics that was considered in \cite{Ham17} although for geodesics starting from a big box rather than a small one.

What is then the reason for the discrepancy in the different exponents (exponent $3/2$ in the results in \cite{Ham17} and the $1/2$ exponent in this paper)? Clearly, the geometry is different as in this paper we consider geodesics starting from a large box around the origin, as opposed to a small box of size $\delta N^{2/3}$ in \cite{Ham17}. Let us try to give a heuristic argument for a possible settlement of this discrepancy. Let us divide the interval $I:=\{(0,i)\}_{0 \leq i \leq N^{2/3}}$ into $\delta^{-1}$ sub-intervals of size $\delta N^{2/3}$. If the event that two geodesics starting from $I$ do not meet by the time they reach the small interval around the point $(N,N)$
 is dominated by the event that the two geodesics leave from the same small sub-interval and if these events decorrelate on the scale of $N^{2/3}$ then by \cite[Theorem 1.1]{Ham17} we have roughly $\delta^{-1}$ decorrelated events of probability (up to logarithmic correction) $\delta^{3/2}$ which would imply that the probability of two geodesics starting from $I$ to not meet by the time they reach a small interval around $(N,N)$ is (up to logarithmic correction) $\delta^{-1}\delta^{3/2}=\delta^{1/2}$.

Concerning the methods used in this paper, one input we use is a control over the lateral fluctuations of the geodesics in the LPP. In Theorem~\ref{thm:locCylinder} we show that the probability that the geodesic of the point-to-point LPP is not localized around a distance $M N^{2/3}$ from the characteristic line decay like $e^{-c M^3}$, which is the optimal power of the decay. This is proven using the approach of~\cite{BSS14}, see Theorem~\ref{prop1}, once the mid-point analogue estimate is derived, see Theorem~\ref{thm:coal}. The novelty here is a simple and short proof of this latter by using only comparison with stationary models. This probabilistic method is much simpler than previous ones.

To prove Theorem~\ref{thm:coal}, the first step is to prove that with high probability the spatial trajectories of both the geodesic of the point-to-point LPP as well as the one of the stationary model with density $1/2$ are sandwiched between the geodesics for the stationary models with some densities $\rho_+>1/2$ and $\rho_-<1/2$ respectively. This then reduces the problem to finding bounds on the coalescing probability only for the two geodesics of the stationary models. This is done using the coupling between different stationary models introduced in~\cite{FS18}. The main ingredient to prove Theorem~\ref{thm:LBcoal} is to show that the geodesics of the stationary models with different densities did not coalesce too early with some positive probability. Here, the application of the queueing representation of the coupling in~\cite{FS18} is more delicate than the one needed for the lower bound, as we now have to force the geodesic away from each other.

\paragraph{Outline of the paper.} In Section~\ref{SectResults} we define the model and state the main results. We recall in Section~\ref{SectPreliminaries} some recurrent notations and basic results on stationary LPP. In Section~\ref{SectLocalStat} we prove first Theorem~\ref{thm:locCylinder} on the localization of the point-to-point geodesics and then show that the geodesics can be sandwiched between two version of the stationary model, see Lemma~\ref{cor:og}. This allows us to prove Theorems~\ref{thm:coal} and~\ref{thm:coalGeneralIC} in Section~\ref{sectLowerBound}. Section~\ref{SectUpperBound} deals with the proof of Theorem~\ref{thm:LBcoal} and Theorem \ref{thm:coal2}.

\paragraph{Acknowledgments.} The authors are grateful to M\'arton Bal\'azs for initial discussions on the topic that led to our collaboration.
O.\ Busani was supported by the EPSRC EP/R021449/1 Standard Grant of the UK. This study did not involve any underlying data. The work of P.L. Ferrari was partly funded by the Deutsche Forschungsgemeinschaft (DFG, German Research Foundation) under Germany’s Excellence Strategy - GZ 2047/1, projekt-id 390685813 and by the Deutsche Forschungsgemeinschaft (DFG, German Research Foundation) - Projektnummer 211504053 - SFB 1060.

\section{Main results}\label{SectResults}
	Let $\omega=\{\omega_x\}_{x\in \Z^2}$ be i.i.d.\ $\textrm{Exp}(1)$-distributed random weights on the vertices of $\Z^2$. For $o\in \Z^2$, define the last-passage percolation (LPP) process on $o+\Z_{\geq0}^2$ by
	\begin{equation}\label{v:G}
	G_{o,y}=\max_{x_{\bullet}\,\in\,\Pi_{o,y}}\sum_{k=0}^{\abs{y-o}_1}\omega_{x_k}\quad\text{ for } y\in o+\Z_{\geq 0}^2.
	\end{equation}
	$\Pi_{o,y}$ is the set of paths $x_{\bullet}=(x_k)_{k=0}^n$ that start at $x_0=o$, end at $x_n=y$ with $n=\abs{y-o}_1$, and have increments $x_{k+1}-x_k\in\{\mathrm{e}_1,\mathrm{e}_2\}$. The a.s.\ unique path $\pi_{o,y}\in \Pi_{o,y}$ that attains the maximum in \eqref{v:G} is the {\it geodesic} from $o$ to $y$.

 Let $\cL=\{x\in\Z^2| x_1+x_2=0\}$ be the antidiagonal crossing through the origin. Given some random variables (in general non independent) $\{h_0(x)\}_{x\in \cL}$ on $\cL$, independent from $\omega$, define the last passage time with initial condition $h_0$ by
	\begin{equation}\label{eq1.2}
G^{h_0}_{\cL,y}=\max_{x_{\bullet}\,\in\,\Pi_{\cL,y}}\bigg(h_0(x_0)+\sum_{k=1}^{\abs{y-o}_1}\omega_{x_k}\bigg)\quad \textrm{for } y>\cL,
	\end{equation}
where $y>\cL$ is meant in the sense of the order on the lattice. We also denote by $Z^{h_0}_{{\cL,y}}$ the point $x_0$ from where the geodesic from $\cL$ to $y$ leaves the line $\cL$, and refer to it as the \emph{exit point} of the last passage percolation with initial condition $h_0$.

One can define stationary models parameterized by a density $\rho\in (0,1)$, both for the LPP on the positive quadrant as for the LPP on the north-east of $\cL$, see Section~\ref{SectStatLPP} for detailed explanations. In that case we denote the stationary LPP by $G^\rho_{o,y}$ or $G^\rho_{\cL,y}$ respectively.

	For $\sigma\in\R_+$ and $0<\tau<1$ we define the cylinder of width $\sigma N^{2/3}$ and length $\tau N$
	\begin{equation}
	\cC^{\sigma,\tau}=\{i\mathrm{e}_4+j\mathrm{e}_3:(1-\tau) N\leq i \leq N,-\tfrac\sigma2 N^{2/3}\leq j \leq \tfrac\sigma2 N^{2/3}\}.
	\end{equation}
	Similarly, for $\sigma\in\R_+$ and $0<\tau<1$ we define a set of width $\sigma N^{2/3}$ and length $\tau N$
	\begin{equation}\label{R}
	\cR^{\sigma,\tau}=\{i\mathrm{e}_4+j\mathrm{e}_3:0 \leq i \leq \tau N,-\tfrac\sigma2 N^{2/3}\leq j \leq \tfrac\sigma2 N^{2/3},|j|<i\}.
	\end{equation}
	\begin{remark}
		Note that the shape of $\cR^{\sigma,\tau}$ in \eqref{R} is somewhat different than the blue cylinder in Figure~\ref{Fig:boxes}. The reason for that is that in this paper we use exit points with respect to the vertical and horizontal axis so that the shape defined in \eqref{R} is easier to work with. We stress that similar results can be obtained for a box as in Figure~\ref{Fig:boxes} by using exit points with respect to the antidiagonal $\cL$.
	\end{remark}
 Due to the correspondence to stochastic growth models in the KPZ universality class, we denote the time direction by $(1,1)$ and the spatial direction by $(1,-1)$. In particular, for any $0<\tau<1$ define the time horizon
	\begin{equation}
	L_\tau=\{\tau N \mathrm{e}_4+i\mathrm{e}_3:-\infty<i<\infty\},
	\end{equation}
	Let $x,y,z\in\Z^2$ be such that $x,y\leq z$. For the geodesics $\pi_{x,z}$ and $\pi_{y,z}$ we define the coalescence point
	\begin{equation}
	C_p(\pi_{x,z},\pi_{y,z})=\inf\{u\in\Z^2:u\in \pi_{x,z}\cap\pi_{y,z}\},
	\end{equation}
where the infimum is with respect to the order $\leq$ on the lattice.
\subsubsection*{Upper and lower bounds on the coalescing point}
The first main result of this paper is that, with probability going to $1$ as $\delta\to 0$, the set of geodesics ending at any point in the cylinder $\cC^{\delta,\tau}$ of the stationary LPP with density $1/2$ is indistinguishable from the geodesics of the point-to-point LPP from the origin for any $\tau\leq \delta^{3/2}/(\log(\delta^{-1}))^3$.
\begin{theorem}\label{thm:coal} Let $o=(0,0)$.
There exist $C,\delta_0>0$ such that for any $\delta\in (0,\delta_0)$ and $\tau\leq\delta^{3/2}/(\log(\delta^{-1}))^3$,
\begin{equation}
\P\Big(C_p(\pi^{1/2}_{o,x},\pi_{y,x})\leq L_{1-\tau} \quad \forall x\in \cC^{\delta,\tau},y\in \cR^{\frac18\log\delta^{-1},1/4}\Big)\geq 1- C\delta^{1/2}\log(\delta^{-1})
\end{equation}
for all $N$ large enough.
\end{theorem}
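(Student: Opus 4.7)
The plan is to compare $\pi^{1/2}_{o,x}$ and $\pi_{y,x}$ to a pair of stationary geodesics of densities $\rho_\pm = 1/2 \pm \e$, for a small parameter $\e$ to be tuned (ultimately of order $\delta^{1/2}$). By planarity and monotonicity of LPP geodesics sharing an endpoint, if the two outer geodesics $\pi^{\rho_-}_{\cdot,x}$ and $\pi^{\rho_+}_{\cdot,x}$ have coalesced before $L_{1-\tau}$, and if both $\pi^{1/2}_{o,x}$ and $\pi_{y,x}$ cross $L_{1-\tau}$ within the interval delimited by the crossings of the outer pair, then all four paths coincide on $[L_{1-\tau},x]$; in particular $\pi^{1/2}_{o,x}$ and $\pi_{y,x}$ coalesce at or before $L_{1-\tau}$. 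This reduces the task to two ingredients: a \emph{sandwich} estimate for the inner pair and an \emph{early coalescence} estimate for the outer pair.

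For the sandwich step the key input is the lateral localization estimate of Theorem~\ref{thm:locCylinder}: the probability that $\pi_{y,x}$ deviates by more than $M N^{2/3}$ from its characteristic line decays like $e^{-cM^3}$. Combined with standard exit-point concentration for the stationary geodesics $\pi^{\rho_\pm}_{\cdot,x}$ (whose exit points from the boundary concentrate on the axes at distance of order $\e N$ from the origin), one can tune parameters so that $\pi^{1/2}_{o,x}$ and $\pi_{y,x}$ lie between $\pi^{\rho_-}_{\cdot,x}$ and $\pi^{\rho_+}_{\cdot,x}$ at $L_{1-\tau}$. The union bound over $y\in\cR^{\frac18\log(\delta^{-1}),1/4}$ is afforded by the cubic-exponential tail with $M\sim\sqrt{\log\delta^{-1}}$, which is exactly why the source region is taken to have logarithmic height $\frac18\log(\delta^{-1})$. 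This step is what Lemma~\ref{cor:og} from the outline encodes.

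For the coalescence step, I would invoke the queueing-theoretic coupling of stationary LPPs of different densities from~\cite{FS18}. There, the symmetric difference of the two stationary geodesics is governed by a second-class-particle-type process whose drift scales with the density gap $2\e$. A careful analysis gives a non-coalescence probability of order $\e$ (up to logarithmic factors) for each fixed endpoint $x$. Discretising $\cC^{\delta,\tau}$ into blocks on which geodesics vary by less than $\tau^{1/2}N^{2/3}$ and union-bounding over the resulting blocks, one arrives at a total bound of order $\delta^{1/2}\log(\delta^{-1})$ after tuning $\e\sim \delta^{1/2}$ and using the restriction $\tau\leq\delta^{3/2}/(\log\delta^{-1})^3$ to absorb the logarithmic overhead.

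The main obstacle is obtaining the sharp $\delta^{1/2}$ exponent, which demands a delicate balance: a smaller $\e$ makes the sandwich step trivial but delays the coalescence of the outer pair, while a larger $\e$ forces quick coalescence but spreads the outer geodesics so much that the inner ones risk leaking out of the sandwich. The logarithmic factor $(\log\delta^{-1})^{3}$ in the allowed range of $\tau$ is precisely what accommodates the two union bounds over the endpoint $x$ and the starting point $y$ after optimisation. The queueing coupling must be analysed more precisely than in~\cite{BBS20}: one needs tight two-sided information on how long the two stationary geodesics remain apart, and it is there that the technical heart of the proof will lie.
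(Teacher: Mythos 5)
Your skeleton---sandwiching $\pi^{1/2}_{o,x}$ and $\pi_{y,x}$ between stationary geodesics of densities $\tfrac12\pm\e$ and then bounding non-coalescence of the outer pair via the coupling of \cite{FS18}---is indeed the paper's strategy, but your parameter tuning $\e\sim\delta^{1/2}$ (in units of $N^{-1/3}$) is not merely suboptimal: it breaks the sandwich step. For $\rho_+=\tfrac12+rN^{-1/3}$ the exit point of $\pi^{\rho_+}_{o,x}$ is positive and of order $rN^{2/3}$ only with probability $1-e^{-cr^3}$ (Lemma~\ref{spc}, Lemma~\ref{lem:sb}), and with $r\sim\delta^{1/2}\to0$ that probability is close to $\tfrac12$ rather than to $1$; the outer geodesics then fail to contain the inner ones with probability bounded away from $0$, and no bound of the form $1-C\delta^{1/2}\log(\delta^{-1})$ can follow. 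The paper takes $r=\tfrac14\log(\delta^{-1})$, i.e.\ a \emph{large} scaled density gap, so that every sandwiching and localization failure costs only $e^{-c(\log\delta^{-1})^3}=o(\delta^{1/2})$. Your stated trade-off is also inverted in both directions: shrinking the gap makes the sandwich \emph{harder} (the outer geodesics hug the inner ones) and makes coalescence of the outer pair \emph{easier} (their Busemann increments agree more often), not the reverse.

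Consequently the exponent $1/2$ does not come from tuning the density gap to $\delta^{1/2}$; it comes from the width $\delta N^{2/3}$ of the target cylinder. After the sandwich, all relevant geodesics cross a single antidiagonal segment $\cI^{s_r,t_r}$ of width $O(\delta N^{2/3})$, and by Lemma~5.9 of \cite{BBS20} the coupled increments $B^{\rho_+}$ and $B^{\rho_-}$ disagree somewhere along the $m\sim\delta N^{2/3}$ boundary edges of the corresponding box with probability $O(r\sqrt{m}\,N^{-1/3})=O(\delta^{1/2}\log(\delta^{-1}))$. On the complement of this one event every geodesic ending in $\cC^{\delta,\tau}$ coalesces simultaneously, so no discretisation of $\cC^{\delta,\tau}$ into blocks and no union bound over $x$ is needed (your block count is in any case not worked out, and a per-endpoint ``non-coalescence probability of order $\e$'' is dimensionally inconsistent with the window-size dependence above). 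Finally, the ``tight two-sided information'' on the queueing coupling that you identify as the technical heart is required only for the matching upper bound, Theorem~\ref{thm:LBcoal}; for the present lower bound the one-sided estimate quoted from \cite{BBS20} suffices.
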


As a direct corollary we have that in a cylinder of spatial width $o(N^{2/3})$ and time width $o(N)$ around the end-point $(N,N)$, the geodesics are indistinguishable from the stationary ones.
\begin{corollary}\label{cor:coal}
For any $\e>0$,
\begin{equation}
\lim_{N\to\infty}
\P\Big(C_p(\pi^{1/2}_{o,x},\pi_{o,x})\leq L_{1-N^{-\e}} \quad \forall x\in \cC^{N^{-\e},N^{-\e}}\Big) =1.
\end{equation}
\end{corollary}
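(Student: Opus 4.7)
}
The strategy is to reduce Corollary~\ref{cor:coal} directly to Theorem~\ref{thm:coal} by a concrete choice of the parameters $\delta$ and $\tau$, and then to bridge the gap between the statement for $\pi_{y,x}$ with $y$ in the bulk box $\cR$ and the statement for the single point-to-point geodesic $\pi_{o,x}$ by invoking the localization result of Theorem~\ref{thm:locCylinder}.

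First, fix $\e>0$ and set $\delta:=N^{-\e/2}$, $\tau:=N^{-\e}$. For all sufficiently large $N$ one has
\[
\frac{\delta^{3/2}}{(\log\delta^{-1})^{3}}=\frac{N^{-3\e/4}}{(\tfrac{\e}{2}\log N)^{3}}\geq N^{-\e}=\tau,
\]
so Theorem~\ref{thm:coal} is applicable with these parameters, and since $\delta\geq N^{-\e}$ and the time-ranges coincide, $\cC^{N^{-\e},N^{-\e}}\subseteq\cC^{\delta,\tau}$. Denote by $A_N$ the event
\[
A_N=\Big\{C_p(\pi^{1/2}_{o,x},\pi_{y,x})\leq L_{1-\tau}\ \text{ for all }x\in\cC^{\delta,\tau},\,y\in\cR^{\tfrac{1}{8}\log\delta^{-1},\,1/4}\Big\};
\]
Theorem~\ref{thm:coal} gives $\P(A_N)\geq 1-CN^{-\e/4}\log N\to 1$.

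Next, introduce the event $B_N$ that for every $x\in\cC^{N^{-\e},N^{-\e}}$ the geodesic $\pi_{o,x}$ meets the time slice $\{i=N/8\}$ at a point $y(x)$ with $|j(y(x))|\leq\tfrac{1}{16}\log\delta^{-1}\cdot N^{2/3}$. For each such $x$, the characteristic from $o$ to $x$ has spatial coordinate at most $N^{2/3-\e}/16\ll(\log N)N^{2/3}$ at time $N/8$, so Theorem~\ref{thm:locCylinder} bounds the probability that $\pi_{o,x}$ deviates from its characteristic by more than $MN^{2/3}$ at time $N/8$ by $C\exp(-cM^{3})$; taking $M\sim\log N$ makes this bound super-polynomially small, and a union bound over the $\mathcal{O}(N^{5/3})$ endpoints in $\cC^{N^{-\e},N^{-\e}}$ yields $\P(B_N)\to 1$. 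The constraint $|j|<i$ in the definition of $\cR^{\sigma,1/4}$ is automatic at $i=N/8$ since $N/8\gg(\log N)N^{2/3}$.

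Finally, on $A_N\cap B_N$ the statement of the corollary holds: for any $x\in\cC^{N^{-\e},N^{-\e}}$, uniqueness of exponential-LPP geodesics together with the prefix/Markov property force the portion of $\pi_{o,x}$ from $y(x)$ to $x$ to coincide with $\pi_{y(x),x}$, so that on $A_N$ the geodesics $\pi_{y(x),x}$ and $\pi^{1/2}_{o,x}$ coalesce at some point $z\leq L_{1-\tau}$; since $\pi_{y(x),x}\subseteq\pi_{o,x}$, the same $z$ bounds $C_p(\pi^{1/2}_{o,x},\pi_{o,x})$. Combining, $\P(A_N\cap B_N)\to 1$ proves the corollary. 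The only delicate ingredient is the polynomial union bound inside the localization estimate — it works comfortably because the cubic-exponential decay $e^{-cM^{3}}$ at scale $M\sim\log N$ is sub-polynomial in $N$.
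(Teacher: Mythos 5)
Your overall strategy is right, and the bridging step (replacing $\pi_{o,x}$ by $\pi_{y(x),x}$ for the point $y(x)\in\cR^{\frac18\log\delta^{-1},1/4}$ where $\pi_{o,x}$ crosses a fixed antidiagonal level, using uniqueness of geodesics and the monotonicity of $C_p$ under passing to a sub-geodesic) is a legitimate and in fact necessary ingredient, since $o\notin\cR^{\sigma,1/4}$ and Theorem~\ref{thm:coal} as stated only covers $y\in\cR^{\sigma,1/4}$. The paper treats this passage as immediate (its proof of Theorem~\ref{thm:coal} sandwiches $\pi_{o,x}$ between $\pi^{\rho_-}_{o,x}$ and $\pi^{\rho_+}_{o,x}$ directly, since the exit point of $G_{o,x}$ is $0$), and otherwise derives the corollary by fixing $\delta$, using the inclusions $\cC^{N^{-\e},N^{-\e}}\subseteq\cC^{\delta,\tau_\delta}$ and $L_{1-\tau_\delta}\leq L_{1-N^{-\e}}$ for large $N$, taking $\liminf_{N\to\infty}$, and only then sending $\delta\to0$.

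There is, however, a genuine gap in your parameter handling, and it creates a circular tension between your two main steps. Theorem~\ref{thm:coal} is quantified as: for any fixed $\delta\in(0,\delta_0)$ and admissible $\tau$, the bound holds \emph{for all $N$ large enough}, where the threshold may depend on $\delta$. Setting $\delta=N^{-\e/2}$ is therefore not licensed by the statement of the theorem; one would have to re-inspect its proof to check uniformity in $\delta$ down to $\delta\sim N^{-\e/2}$ (several steps there, e.g.\ Corollary~\ref{cor:co}, are themselves stated as $N\to\infty$ limits at fixed parameters). The standard repair is to fix $\delta$ and take the double limit as above --- but then your event $B_N$ breaks: with $\delta$ fixed, $M\sim\log\delta^{-1}$ is a constant, the per-endpoint localization bound $e^{-cM^3}$ is a constant, and the union bound over the $\mathcal{O}(N^{5/3})$ endpoints of $\cC^{N^{-\e},N^{-\e}}$ fails completely. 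Your union bound only closes \emph{because} you took $M\sim\log N$, i.e.\ because $\delta$ depends on $N$, which is exactly the unlicensed step. The fix is to drop the union bound and use the ordering of geodesics instead: it suffices to control the two extremal geodesics $\pi_{o,x^1}$ and $\pi_{o,x^2}$ to the corners of the far face of the cylinder (two applications of Theorem~\ref{prop1}, which is the statement you actually want here --- Theorem~\ref{thm:locCylinder} as written concerns a different event), since every other $\pi_{o,x}$, $x\in\cC^{N^{-\e},N^{-\e}}$, is sandwiched between them and hence also crosses $\cR^{\frac18\log\delta^{-1},1/4}$. This gives $\P(B_N)\geq 1-Ce^{-c(\log\delta^{-1})^3}$ uniformly in large $N$ at fixed $\delta$, and then $\liminf_{N}\P(A_N\cap B_N)\geq 1-C\delta^{1/2}\log(\delta^{-1})-Ce^{-c(\log\delta^{-1})^3}$, which tends to $1$ as $\delta\to0$.
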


Theorem~\ref{thm:coal} generalizes to LPP with a large class of initial conditions, i.e., consider LPP from $\cL$ with initial condition $h_0$, like the ones considered in~\cite{CFS16,FO18}.
We make the following assumption on $h_0$. Recall the exit point $Z^{h_0}_{\cL,x}$ mentioned right after \eqref{eq1.2}.
\begin{assumption}\label{ass_IC}
Let $x^1=N\mathrm{e}_4 + \tfrac34 \delta N^{2/3} \mathrm{e}_3$ and $x^2=N\mathrm{e}_4 - \tfrac34 \delta N^{2/3} \mathrm{e}_3$. Assume that
\begin{equation}
\P(Z^{h_0}_{\cL,x^1}\leq \log(\delta^{-1}) N^{2/3})\geq 1-Q(\delta)
\end{equation}
and
\begin{equation}
\P(Z^{h_0}_{\cL,x^2}\geq -\log(\delta^{-1}) N^{2/3})\geq 1-Q(\delta)
\end{equation}
for all $N$ large enough, with a function $Q(\delta)$ satisfying $\lim_{\delta\to 0}Q(\delta)=0$.
\end{assumption}
Under Assumption~\ref{ass_IC} the analogue of Theorem~\ref{thm:coal} (and thus of Corollary~\ref{cor:coal}) holds true.
\begin{theorem}\label{thm:coalGeneralIC}
Under Assumption~\ref{ass_IC}, there exist $C,\delta_0>0$ such that for any $\delta\in (0,\delta_0)$ and $\tau\leq\delta^{3/2}/(\log(\delta^{-1}))^3$,
\begin{equation}
\P\Big(C_p(\pi^{1/2}_{\cL,x},\pi^{h_0}_{\cL,x})\leq L_{1-\tau} \quad \forall x\in \cC^{\delta,\tau}\Big)\geq 1- C\delta^{1/2}\log(\delta^{-1})- Q(\delta)
\end{equation}
for all $N$ large enough.
\end{theorem}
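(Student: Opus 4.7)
}

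The plan is to reduce Theorem~\ref{thm:coalGeneralIC} to Theorem~\ref{thm:coal} by controlling the exit points of both the $h_0$-geodesic and the stationary $\rho=1/2$ geodesic on $\cL$, and then invoking transitivity of coalescence through the reference point-to-point geodesic $\pi^{1/2}_{o,x}$.

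First I would invoke the standard non-crossing (monotonicity) property of LPP geodesics from $\cL$: for each realization of the environment and of $h_0$, the exit point $Z^{h_0}_{\cL,x}$ is monotone in the spatial ($\mathrm{e}_3$) coordinate of the endpoint. Every $x\in\cC^{\delta,\tau}$ has $\mathrm{e}_3$-coordinate in $[-\tfrac12\delta N^{2/3},\tfrac12\delta N^{2/3}]$, which is strictly sandwiched between the $\mathrm{e}_3$-coordinates of $x^2$ and $x^1$, so this monotonicity yields
\begin{equation*}
Z^{h_0}_{\cL,x^2}\leq Z^{h_0}_{\cL,x}\leq Z^{h_0}_{\cL,x^1}\quad\text{for all }x\in\cC^{\delta,\tau}.
\end{equation*}
Combined with Assumption~\ref{ass_IC} and a union bound, one obtains $|Z^{h_0}_{\cL,x}|\leq\log(\delta^{-1})N^{2/3}$ uniformly in $x\in\cC^{\delta,\tau}$ with probability at least $1-2Q(\delta)$. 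An analogous bound for $Z^{1/2}_{\cL,x}$, with error at most $C\delta^{1/2}\log(\delta^{-1})$, follows from the standard exit-point concentration for the stationary model (which is essentially Lemma~\ref{cor:og} and already enters the proof of Theorem~\ref{thm:coal}).

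On the intersection of the two good events, the portion of $\pi^{h_0}_{\cL,x}$ past $\cL$ coincides with the point-to-point geodesic $\pi_{y,x}$ for $y$ one lattice step inside the first quadrant from $Z^{h_0}_{\cL,x}$, and this $y$ lies in $\cR^{\tfrac18\log\delta^{-1},1/4}$. Theorem~\ref{thm:coal} then gives $C_p(\pi^{1/2}_{o,x},\pi^{h_0}_{\cL,x})\leq L_{1-\tau}$ for every $x\in\cC^{\delta,\tau}$, at the cost of an additional error of order $\delta^{1/2}\log(\delta^{-1})$. The identical reasoning applied to $\pi^{1/2}_{\cL,x}$ yields $C_p(\pi^{1/2}_{o,x},\pi^{1/2}_{\cL,x})\leq L_{1-\tau}$. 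Since two geodesics terminating at a common point $x$ that both coalesce with a third common geodesic must coalesce with each other at the later of the two coalescence times, one concludes $C_p(\pi^{1/2}_{\cL,x},\pi^{h_0}_{\cL,x})\leq L_{1-\tau}$. Collecting the probability estimates gives the claimed $1-C\delta^{1/2}\log(\delta^{-1})-Q(\delta)$ bound after absorbing constants.

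The main technical subtlety is the boundary of the set $\cR^{\sigma,\tau}$: the strict condition $|j|<i$ in \eqref{R} formally excludes points on $\cL$, so one must verify that Theorem~\ref{thm:coal} remains valid for starting points $y$ a single lattice step above $\cL$, or equivalently absorb this one-step shift into the argument; in either case the cost is absorbed in constants. A secondary point, more bookkeeping than serious obstacle, is that the exit-point monotonicity must be applied uniformly over the time coordinate of $x$ ranging in the slab $[(1-\tau)N,N]$, which is routine from the planar non-crossing of LPP geodesics.
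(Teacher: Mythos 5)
Your overall strategy --- replace $\pi^{h_0}_{\cL,x}$ past its exit point by a point-to-point geodesic from a point of $\cR^{\frac18\log\delta^{-1},1/4}$, apply Theorem~\ref{thm:coal}, and conclude by transitivity of coalescence through $\pi^{1/2}_{o,x}$ --- is genuinely different from the paper's, which instead reruns the sandwiching argument (Lemmas~\ref{lem:sb} and~\ref{cor:og}) with the stationary models rooted on the line $\cL$ rather than at $o$, so that $\pi^{h_0}_{\cL,x}$ is squeezed directly between $\pi^{\rho_-}_{\cL,x}$ and $\pi^{\rho_+}_{\cL,x}$ once $Z^{h_0}_{\cL,x}\in[-rN^{2/3},rN^{2/3}]$. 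The transitivity step itself is sound. But your reduction has a genuine gap that you misdiagnose as a one-lattice-step boundary technicality. By \eqref{R}, the condition $|j|<i$ makes $\cR^{\sigma,1/4}$ a subset of the open first quadrant $\Z_{>0}^2$. Under Assumption~\ref{ass_IC} the exit point of $\pi^{h_0}_{\cL,x}$ is $(Z,-Z)$ with $|Z|$ as large as $\log(\delta^{-1})N^{2/3}$, so the first point of the geodesic past $\cL$ is $(Z+1,-Z)$ or $(Z,-Z+1)$, which for $Z\neq0$ has a nonpositive coordinate and sits at distance of order $\log(\delta^{-1})N^{2/3}$ from the first quadrant --- nowhere near $\cR^{\frac18\log\delta^{-1},1/4}$, whose spatial half-width is moreover only $\frac1{16}\log(\delta^{-1})N^{2/3}$, a factor $16$ too small even in the spatial direction. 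To land in $\cR$ you would have to control where $\pi^{h_0}_{\cL,x}$ first crosses the coordinate axes, which Assumption~\ref{ass_IC} does not provide; this needs an extra transversal-fluctuation estimate (Theorem~\ref{prop1} applied to the initial segment of the point-to-point geodesic from the exit point) together with a rerun of Theorem~\ref{thm:coal} for a wider $\cR$ --- in substance the same work the paper performs by moving the stationary boundary to $\cL$.

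A second, smaller gap: the uniform bound $|Z^{h_0}_{\cL,x}|\le\log(\delta^{-1})N^{2/3}$ for all $x\in\cC^{\delta,\tau}$ does not follow from monotonicity in the $\mathrm{e}_3$-coordinate alone. Exit points are monotone with respect to the order $\preceq$, and a point of $\cC^{\delta,\tau}$ at time $(1-\tau)N$ is not $\preceq$-comparable to $x^1,x^2$ (which sit at time $N$), since $\tau N=\delta^{3/2}N/(\log\delta^{-1})^3\gg\delta N^{2/3}$. Uniformity over the cylinder is precisely the content of the $D^1,D^2$ confinement argument of Lemma~\ref{lem:cyl2} and Corollary~\ref{cor:up}, which again rests on quantitative transversal fluctuations rather than on planar non-crossing alone.
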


The exponent $1/2$ in Theorem~\ref{thm:coal} is optimal as our next result shows.
\begin{theorem}\label{thm:LBcoal} Let $o=(0,0)$.
	There exist $C,\delta_0>0$ such that for any $\delta\in (0,\delta_0)$ and $\tau\leq\delta^{3/2}/(\log(\delta^{-1}))^3$,
	\begin{equation}
	\P\Big(C_p(\pi^{1/2}_{o,x},\pi_{y,x})\leq L_{1-\tau} \quad \forall x\in \cC^{\delta,\tau},y\in \cR^{(\frac18\log\delta^{-1}),\tau}\Big)\leq 1- C\delta^{1/2}
	\end{equation}
	for all $N$ large enough.
\end{theorem}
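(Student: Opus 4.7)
The plan is to exhibit, with probability at least $C\delta^{1/2}$, a specific pair $(x^*, y^*) \in \cC^{\delta,\tau}\times \cR^{(\frac18\log\delta^{-1}),\tau}$ for which the geodesics $\pi^{1/2}_{o,x^*}$ and $\pi_{y^*,x^*}$ do not coalesce by time $L_{1-\tau}$. Following the roadmap sketched in the introduction, the main input will be a lower bound of the same order $\delta^{1/2}$ on the non-coalescence probability of two stationary geodesics at slightly different densities, obtained via the queueing representation of~\cite{FS18}. In contrast to the proof of Theorem~\ref{thm:coal}, where the queueing coupling is used to drive two queues to synchronise, here one must isolate a positive-probability event on which they stay apart.

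For the reduction, choose $x^* = N\mathrm{e}_4$ at the centre of the end-cylinder and set $\rho_- = \tfrac12 - c\delta^{1/2} N^{-1/3}$ for a small constant $c>0$. With this scaling the exit point $Z^{\rho_-}_{o,x^*}$ of the $\rho_-$-stationary geodesic is displaced from the origin by $\Theta(\delta^{1/2} N^{2/3})$, larger than the local KPZ fluctuation scale $(\tau N)^{2/3}\asymp \delta (\log\delta^{-1})^{-2} N^{2/3}$ near $x^*$. Let $y^*$ be the point on $\pi^{\rho_-}_{o,x^*}$ at diagonal time $\tau N$; the cubic localisation of Theorem~\ref{thm:locCylinder} guarantees $y^* \in \cR^{(\frac18\log\delta^{-1}),\tau}$ with probability $1 - o(\delta^{1/2})$, and by the sub-geodesic property $\pi_{y^*,x^*}$ coincides with the restriction of $\pi^{\rho_-}_{o,x^*}$ to diagonal times $[\tau N, N]$. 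Consequently, if the two stationary geodesics $\pi^{1/2}_{o,x^*}$ and $\pi^{\rho_-}_{o,x^*}$ do not coalesce before $L_{1-\tau}$, then $C_p(\pi^{1/2}_{o,x^*},\pi_{y^*,x^*}) > L_{1-\tau}$ as well.

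The core step is then to prove
\begin{equation*}
\P\big(C_p(\pi^{1/2}_{o,x^*},\pi^{\rho_-}_{o,x^*}) > L_{1-\tau}\big)\geq C\delta^{1/2}.
\end{equation*}
Realize the stationary LPPs at densities $\tfrac12$ and $\rho_-$ on a common probability space via the tandem queue coupling of~\cite{FS18}, and isolate an explicit event in the queue dynamics, of probability $\Theta(\delta^{1/2})$, on which the two queues remain desynchronised over the horizon $(1-\tau)N$: informally, the event that the two exit points land on opposite axes with Baik--Rains-type displacements of order $\delta^{1/2} N^{2/3}$, combined with the cubic localisation of Theorem~\ref{thm:locCylinder} that confines each geodesic to a narrow tube around its own characteristic and hence excludes accidental coalescence through transverse fluctuations. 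The $\delta^{1/2}$ order arises as the natural probability scale of the opposing exit-point shifts.

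The principal difficulty lies in this core estimate. In the proof of Theorem~\ref{thm:coal} the FS18 queueing representation is used to establish queue merging, which is a direction naturally favoured by the coupling; here one must instead produce a positive-probability event that actively prevents merging over a macroscopic horizon $(1-\tau)N$. This is the ``more delicate'' application of~\cite{FS18} alluded to in the introduction, and will require a careful conditioning on the joint tail behaviour of the queue's busy cycles together with the sharp lateral-deviation control of Theorem~\ref{thm:locCylinder}.
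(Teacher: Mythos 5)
Your reduction step is sound and close in spirit to the paper's: the paper also fixes the endpoint $o^2=N\mathrm{e}_4$, introduces perturbed stationary densities, and uses the ordering/sub-geodesic structure to transfer non-coalescence of stationary geodesics to non-coalescence of the pairs appearing in the statement (it uses two starting points $y^1,y^2$ on opposite axes rather than a random point on the perturbed geodesic, but either reduction works). The problem is the core estimate, which you correctly identify as the principal difficulty but whose sketched mechanism does not work, on two counts. First, the scale of the perturbation is wrong: with $\rho_-=\tfrac12-c\delta^{1/2}N^{-1/3}$, the probability that the boundary increments of the two stationary processes differ anywhere on the window of width $O((\tau N)^{2/3})$ through which both geodesics cross $L_{1-\tau}$ is of order $(\tau N)^{1/3}\cdot\delta^{1/2}N^{-1/3}=\tau^{1/3}\delta^{1/2}\asymp\delta/\log(\delta^{-1})$ (this is exactly the computation behind Corollary~\ref{cor:co}), and non-coalescence at $L_{1-\tau}$ forces such a disagreement. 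So for your choice of $\rho_-$ the non-coalescence probability is $O(\delta/\log(\delta^{-1}))$, not $\Theta(\delta^{1/2})$. The paper must, and does, take the density gap of order $\log(\delta^{-1})N^{-1/3}$ (the $\rho'_\pm$ in its proof), matched to the window size $(\tau N)^{2/3}$ so that the product is $\delta^{1/2}$.

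Second, the proposed mechanism --- opposing exit-point shifts at the origin plus the cubic localisation of Theorem~\ref{thm:locCylinder} --- cannot produce non-coalescence all the way up to $L_{1-\tau}$. The two characteristics through $N\mathrm{e}_4$ separate only linearly, so at distance $\tau N$ from the common endpoint they are $O(\tau\,\delta^{1/2}N^{2/3})$ apart, vastly smaller than the transverse fluctuation scale $(\tau N)^{2/3}\asymp\delta(\log\delta^{-1})^{-2}N^{2/3}$ there; the localisation tubes overlap completely near the endpoint, so they exclude nothing. (Likewise, at the origin the deterministic shift $\delta^{1/2}N^{2/3}$ of $Z^{\rho_-}_{o,x^*}$ is negligible compared with its own $N^{2/3}$ fluctuations, so the exit points are not typically on opposite axes, and in any case separation at the origin says nothing about disjointness at $L_{1-\tau}$.) What is actually required is a positive-probability event forcing the two stationary geodesics to cross the line $L_{1-\tau}$ at separated locations. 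The paper constructs this (Proposition~\ref{prop:Lb}) by working directly on $L_{1-t_r}$ with the idle-time identity \eqref{sume} from the queueing coupling of~\cite{FS18}: the exponent $\delta^{1/2}$ arises from the stationary waiting-time law \eqref{des} assigning mass $\asymp r\,t_r^{1/3}\asymp\delta^{1/2}$ to $[0,(t_rN)^{1/3}]$, multiplied by a $\Theta(1)$ random-walk event (Lemmas~\ref{lem:gl}--\ref{lem:Llb2}) that steers the cumulative idle time so that the argmaxes of the two boundary profiles land in disjoint intervals. This construction is the substance of the proof and is entirely absent from, and not replaceable by, the argument you outline.
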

The following result is closely related to Theorem~\ref{thm:coal} and Theorem~\ref{thm:LBcoal}, it considers the question of coalescence of point-to-point geodesics.
\begin{theorem}\label{thm:coal2}
	There exist $C,\delta_0>0$ such that for any $\delta\in (0,\delta_0)$ and $\tau\leq\delta^{3/2}/(\log(\delta^{-1}))^3$,
	\begin{equation}
	1-C\delta^{1/2}\log(\delta^{-1})\leq \P\Big(C_p(\pi_{w,x},\pi_{y,x})\leq L_{1-\tau} \quad \forall x\in \cC^{\delta,\tau},w,y\in \cR^{(\frac18\log\delta^{-1}),\tau}\Big)\leq 1- C\delta^{1/2}
	\end{equation}
	for all $N$ large enough.
\end{theorem}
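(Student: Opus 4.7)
The inequality $\P(\cdot)\ge 1-C\delta^{1/2}\log(\delta^{-1})$ follows directly from Theorem~\ref{thm:coal}. Since $\tau\le 1/4$, we have the inclusion $\cR^{(\frac18\log\delta^{-1}),\tau}\subset \cR^{\frac18\log\delta^{-1},1/4}$, so Theorem~\ref{thm:coal} applies to every candidate starting point in our box. On the high-probability event it furnishes, for every $x\in\cC^{\delta,\tau}$ and every $z$ in the larger starting box, the point-to-point geodesic $\pi_{z,x}$ shares a vertex with the stationary geodesic $\pi^{1/2}_{o,x}$ before $L_{1-\tau}$. In particular both $\pi_{w,x}$ and $\pi_{y,x}$ meet $\pi^{1/2}_{o,x}$ at some vertex before the horizon; by uniqueness of geodesics in exponential LPP, from the later of these two coalescence vertices onward all three geodesics coincide, and hence $\pi_{w,x}$ and $\pi_{y,x}$ themselves coalesce at or before $L_{1-\tau}$.

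\textbf{Upper bound.} For the matching bound $\P(\cdot)\le 1-C\delta^{1/2}$, the plan is to single out a specific pair of starting points $w^\ast,y^\ast\in\cR^{(\frac18\log\delta^{-1}),\tau}$ located near the transversal boundary of the box on opposite sides of the time axis, together with a target $x^\ast\in\cC^{\delta,\tau}$, and to show that
\begin{equation*}
\P\bigl(C_p(\pi_{w^\ast,x^\ast},\pi_{y^\ast,x^\ast})>L_{1-\tau}\bigr)\ge C\delta^{1/2}.
\end{equation*}
The strategy mirrors the proof of Theorem~\ref{thm:LBcoal}. Using the sandwiching Lemma~\ref{cor:og}, each of $\pi_{w^\ast,x^\ast}$ and $\pi_{y^\ast,x^\ast}$ is, up to a negligible error event, trapped between stationary geodesics whose densities are shifted from $1/2$ by $\Theta(\delta^{1/2})$; with an appropriate choice of $w^\ast$ and $y^\ast$ this yields densities $\rho_-<1/2<\rho_+$ with $\rho_+-\rho_-=\Theta(\delta^{1/2})$, one associated with each endpoint. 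The problem is then reduced to showing that the two stationary geodesics of densities $\rho_\pm$ ending at $x^\ast$ fail to coalesce before $L_{1-\tau}$ with probability at least $C\delta^{1/2}$, which is handled by the queueing coupling of~\cite{FS18}.

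The hard part is this last step, which is the ``more delicate'' application of the coupling already flagged for Theorem~\ref{thm:LBcoal}. In contrast to the lower bound, where one uses the coupling to keep two stationary geodesics together, here one must tune the queueing representation to \emph{force them apart}: the argument requires an explicit lower bound, uniform in $N$, on the probability that within a transversal window of order $\delta^{1/2}N^{2/3}$ the exit point of the $\rho_+$-geodesic lies on one axis while the exit point of the $\rho_-$-geodesic lies on the other. Once such a separating event of probability $\gtrsim\delta^{1/2}$ is established, planarity prevents the two stationary geodesics from meeting before $L_{1-\tau}$, and the sandwiching transports this non-coalescence to $\pi_{w^\ast,x^\ast}$ and $\pi_{y^\ast,x^\ast}$, completing the upper bound.
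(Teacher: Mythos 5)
Your lower bound is exactly the paper's argument: the event in Theorem~\ref{thm:coal} forces every $\pi_{z,x}$ with $z$ in the starting box to merge with $\pi^{1/2}_{o,x}$ before $L_{1-\tau}$, hence any two such geodesics merge with each other; nothing to add there.

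For the upper bound you take a genuinely different route from the paper. The paper does \emph{not} re-run the sandwiching/queueing construction for a new pair $w^\ast,y^\ast$. Instead it reduces directly to the already-proved statement of Theorem~\ref{thm:LBcoal} via the inclusion: if $C_p(\pi^{1/2}_{o,x},\pi_{y,x})>L_{1-\tau}$ for some $x,y$ \emph{and} the exit point satisfies $|Z^{1/2}_{o,x}|\leq \tfrac1{16}\log(\delta^{-1})N^{2/3}$, then the stationary geodesic beyond its exit point is itself a point-to-point geodesic started from a point of $\cR^{\frac18\log\delta^{-1},1/4}$, so one obtains a non-coalescing pair of point-to-point geodesics. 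Combined with the exit-point tail bound $e^{-c(\log\delta^{-1})^3}$ (from \eqref{r1}), the upper bound of Theorem~\ref{thm:LBcoal} transfers at the cost of a negligible error. Your plan instead reconstructs the separating event from scratch, which amounts to re-deriving the intermediate display \eqref{cont} in the proof of Theorem~\ref{thm:LBcoal}: there the points $y^1=\tfrac14 rN^{2/3}\mathrm{e}_1$, $y^2=\tfrac14 rN^{2/3}\mathrm{e}_2$ already lie in the starting box, and $\cA'\cap\{\cH^{\rho'_-}\in I_-,\,\cH^{\rho'_+}>0\}$ forces $C_p(\pi_{y^1,o^2},\pi_{y^2,o^2})>L_{1-\tau}$ with probability at least $C\delta^{1/2}$ by Proposition~\ref{prop:Lb}. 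So your strategy is sound and, if anything, slightly more direct than the paper's (it avoids the exit-point localization step), but as written it leaves the entire quantitative core as a pointer to machinery you do not develop, whereas the paper's reduction needs only the \emph{statement} of Theorem~\ref{thm:LBcoal} plus one tail estimate.

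Two imprecisions in your sketch worth flagging. First, the sandwiching densities are $\rho_\pm=\tfrac12\pm rN^{-1/3}$ with $r\asymp\log(\delta^{-1})$, so the density gap is $\Theta(\log(\delta^{-1})N^{-1/3})$, not $\Theta(\delta^{1/2})$; the exponent $1/2$ emerges only from the random-walk/queueing computation in Proposition~\ref{prop:Lb}, roughly as $1-e^{-2rN^{-1/3}(z_1-z_0)^{1/2}}\asymp r\,t_r^{1/3}\asymp\delta^{1/2}$. Second, the separating event is not about exit points on the two coordinate axes in a window of order $\delta^{1/2}N^{2/3}$; it concerns the crossing points $\cH^{\rho_\pm}$ of the single horizontal line at level $(1-t_r)N$, localized in a window of order $t_r^{2/3}N^{2/3}=\delta(\log(\delta^{-1}))^{-2}N^{2/3}$, with $\cH^{\rho_-}$ in the left half and $\cH^{\rho_+}$ in the right. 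Neither slip breaks the architecture of your argument, but they indicate the scales have not been pinned down, and pinning them down is precisely where the $\delta^{1/2}$ comes from.
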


\subsubsection*{Cubic decay of localization}
In order to prove the main theorems we will need some control on the spatial fluctuations of the geodesics for the point-to-point problem. As this estimate has its own interest, we state it below as Theorem~\ref{thm:loc}. For a an up-right path $\gamma$ we denote
\begin{equation}
\begin{aligned}
\Gamma^u_k(\gamma)&=\max\{l:(k,l)\in\gamma\},\\
\Gamma^l_k(\gamma)&=\min\{l:(k,l)\in\gamma\}.
\end{aligned}
\end{equation}
When $\gamma$ is a geodesic associated with a direction, we denote the direction by $\xi=(\xi_1,\xi_2)$ with $\xi_1+\xi_2=1$, and we set
\begin{equation}
\Gamma_k(\gamma)=\max\{|\Gamma^u_k(\gamma)-\tfrac{\xi_2}{\xi_1} k|,|\Gamma^l_k(\gamma)-\tfrac{\xi_2}{\xi_1} k|\}.
\end{equation}

\begin{theorem}\label{thm:locCylinder}
Let $\e\in (0,1]$. Then there exists $N_0(\e)$ and $c_1(\e)$ such that for $\xi$ satisfying $\e\leq \xi_2/\xi_1\leq 1/\e$,
	\begin{equation}
	\P\big(\Gamma_{k}(\pi_{o,\xi N})>M(\tau N)^{2/3}\textrm{ for all }k\in[0,\tau \xi_1 N] \big)\leq e^{-c_1M^3}
	\end{equation}
for all $\tau N\geq N_0$ and all $M\leq (\tau N)^{1/3}/\log(N)$.
\end{theorem}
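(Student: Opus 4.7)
The plan is to follow the two-step strategy outlined in the introduction. First, I establish a single-row (midpoint) cubic-tail estimate for the transversal position of $\pi_{o,\xi N}$ at any fixed column $k$, at the natural scale $k^{2/3}$. Second, I upgrade this to the uniform-in-$k$ tube statement at the enlarged scale $(\tau N)^{2/3}$ by the monotonicity-based argument of~\cite{BSS14} encoded in Theorem~\ref{prop1}.

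For the midpoint estimate, let $\chi=\xi_2/\xi_1$ and let $\rho_\xi$ be the stationary density whose characteristic direction matches $\xi$. Using the sandwich coupling of Lemma~\ref{cor:og}, I compare $\pi_{o,\xi N}$ with the geodesics of two stationary models of densities $\rho_\pm=\rho_\xi\pm c_0 M k^{-1/3}$. A density perturbation of size $M k^{-1/3}$ shifts the stationary characteristic by order $M k^{2/3}$ at column $k$, so an upward (resp.\ downward) deviation of $\pi_{o,\xi N}$ by more than $M k^{2/3}$ at column $k$ forces the exit point of the stationary model of density $\rho_-$ (resp.\ $\rho_+$) from the coordinate axis to lie at anomalous distance of order $M k^{2/3}$ from the origin. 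The cubic tail for stationary exit points in this moderate-deviation regime is classical via Burke-type coupling and variance computations \cite{BCS06,FS18}, yielding
\[
\P\bigl(\Gamma_{k}(\pi_{o,\xi N}) > M k^{2/3}\bigr) \leq e^{-cM^3},
\]
valid while the tilted densities $\rho_\pm$ remain bounded away from $\{0,1\}$.

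To pass from this midpoint estimate to the tube bound, I invoke the monotonicity argument of~\cite{BSS14} (our Theorem~\ref{prop1}). Applied at column $k$ with the enlarged threshold $M(\tau N)^{2/3} = M(\tau N/k)^{2/3}\cdot k^{2/3}$, the midpoint estimate gives the pointwise bound $e^{-cM^3(\tau N/k)^2}$, whose bulk contribution on summing over $k\in[0,\tau\xi_1 N]$ sits near $k=\tau N$ and is controlled by $e^{-cM^3}$. The monotonicity step replaces a naive union bound over columns by a discretization of the tube boundary, exploiting planar monotonicity of geodesics in the endpoint (the geodesic to a shifted endpoint $\xi N+(j,-j)$ is forced outside the tube whenever the original one is), so that only a polynomial prefactor in $\tau N$ appears, which is absorbed by the exponent provided $M\leq(\tau N)^{1/3}/\log N$.

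The main technical obstacle is the calibration between the two steps. The density tilts $\rho_\pm$ must be chosen so that the moderate-deviation regime for the stationary exit point is matched precisely to the desired cubic rate, and the upper bound $M\leq(\tau N)^{1/3}/\log N$ arises both to keep $\rho_\pm$ bounded inside $(0,1)$ and to absorb the polynomial factor produced by the tube discretization. Once these constraints are tracked carefully, the cubic exponent $e^{-c_1M^3}$ emerges directly, without appealing to Tracy--Widom asymptotics; this is the main novelty in the argument, as emphasized in the introduction.
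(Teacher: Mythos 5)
Your two-step architecture (a one-column localization estimate, upgraded to a uniform tube by the dyadic monotonicity argument of~\cite{BSS14}, i.e.\ Theorem~\ref{prop1}) is exactly the paper's, but the justification of the first step has a genuine gap. The implication you assert --- that an upward deviation of $\pi_{o,\xi N}$ by $Mk^{2/3}$ at column $k$ ``forces the exit point of the stationary model of density $\rho_-$ to lie at anomalous distance of order $Mk^{2/3}$ from the origin'' --- is not correct as stated: the stationary geodesic to $\xi N$ is built from boundary weights on the axes that are independent of the bulk, and its exit point is not deterministically tied to the intermediate position of the point-to-point geodesic. Moreover, for a tilt of size $Mk^{-1/3}$ the characteristic of $\rho_\pm$ through $\xi N$ meets the axes at distance of order $MNk^{-1/3}\gg Mk^{2/3}$ when $k\ll N$, so the event you want to call anomalous is in fact typical. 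The paper's actual mechanism (proof of Theorem~\ref{thm:loc}) is different: one compares the \emph{increments} $J_i$ and $\widehat J_i$ of the forward LPP from $o$ and the backward LPP from $\xi N$ along the vertical line at column $\tau\xi_1N$ with the increments of two tilted stationary models (Comparison Lemma~\ref{Lem:Comparison}, valid on the event that the tilted exit points to and from the intermediate point $p^3$ have the right signs, which costs $e^{-cM^3}$ by Lemma~\ref{spc}), and then shows that the random walk $W_i=\sum_k(J_k^{\rho_1}-\widehat J_k^{\rho_2})$ of increment differences stays negative beyond height $\tfrac M2(\tau N)^{2/3}$ by a drift/exponential--Chebyshev computation (Lemma~\ref{lem:crw} and \eqref{eq3.20}). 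This random-walk step is the heart of the proof and is absent from your sketch. Your appeal to Lemma~\ref{cor:og} is also both inapt (it concerns sandwiching near the terminal cylinder $\cC^{s_r/2,t_r}$) and circular, since its proof invokes Theorem~\ref{prop1}, which rests on the very one-column estimate you are constructing.

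Two smaller points on the second step. First, the condition $M\le(\tau N)^{1/3}/\log N$ does not serve to ``absorb a polynomial prefactor'': an upper bound on $M$ cannot beat a factor $\mathrm{poly}(\tau N)$ in front of $e^{-cM^3}$ when $M$ is of order one. The dyadic chaining in Theorem~\ref{prop1} produces no such prefactor at all (the sum $\sum_j 2^j e^{-c_1u_1^32^{j/2}}$ is already $O(e^{-cv^3})$); the constraint on $M$ is needed only so that the tilted densities stay within the moderate-deviation window of Lemma~\ref{spc} and the expansions \eqref{rhod} remain valid. Second, Theorem~\ref{prop1} controls a geodesic over its \emph{entire} time span, so it cannot be applied to $\pi_{o,\xi N}$ directly at scale $(\tau N)^{2/3}$. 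The paper first pins the geodesic at column $\tau\xi_1N$ via Theorem~\ref{thm:loc}, then sandwiches its initial segment between two point-to-point geodesics of length $\tau N$ with endpoints $\tau\xi N\pm\tfrac12M(\tau N)^{2/3}\mathrm{e}_1$, and applies Theorem~\ref{prop1} to those. Your ``chaining over the subinterval'' is morally the same, but you must make this pinning of the right endpoint of the subinterval explicit, since otherwise the chaining has no anchor at $k=\tau\xi_1N$.
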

A statement similar to Theorem~\ref{thm:locCylinder} with Gaussian bound is Proposition~2.1 of~\cite{BG18}. The authors employed Theorems~10.1 and~10.5 of~\cite{BSS14}. Potentially their argument could be improved to get a cubic decay using the bounds from random matrices of~\cite{LR10}, but we did not verify this. Instead, we provide a short and self-contained proof of the localization result using comparison with stationarity only.

\begin{remark}\label{remptptlocal}
Theorem~\ref{thm:locCylinder} states the optimal localization scale for small $\tau$. By symmetry of the point-to-point problem, the same statement holds with $\tau$ replaced by $1-\tau$ and and gives the optimal localization scale for $\tau$ close to $1$.
\end{remark}

\subsubsection*{A family of random initial conditions.}
Now let us consider the family of initial conditions, interpolating between flat initial condition (i.e., point-to-line LPP) and the stationary initial condition, for which the time-time covariance was studied in~\cite{FO18}. For $\sigma\geq 0$, let us define
\begin{equation}\label{eq2.10}
h_0(k,-k)=\sigma \times \left\{
\begin{array}{ll}
\sum_{\ell=1}^k(X_\ell-Y_\ell),&\textrm{ for }k\geq 1,\\
0,&\textrm{ for }k=0,\\
-\sum_{\ell=k+1}^0 (X_\ell-Y_\ell),&\textrm{ for }k\leq -1.
\end{array}\right.
\end{equation}
where $\{X_k,Y_k\}_{k\in\Z}$ are independent random variables $X_k,Y_k\sim\textrm{Exp}(1/2)$. For $\sigma=0$ it corresponds to the point-to-line LPP, while for $\sigma=1$ it is the stationary case with density $1/2$.
\begin{proposition}
For LPP with initial condition \eqref{eq2.10}, Assumption~\ref{ass_IC} holds with
\begin{equation}
 Q(\delta)= C e^{-c (\log(\delta^{-1}))^3}
\end{equation}
for some constants $C,c>0$.
\end{proposition}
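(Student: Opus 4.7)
The plan is to establish the cubic tail
$$\P\bigl(Z^{h_0}_{\cL,x^1} > L N^{2/3}\bigr) \leq C e^{-c L^3}$$
with $L=\log(\delta^{-1})$; the bound at $x^2$ is symmetric. Set $\alpha=\tfrac34 \delta N^{2/3}$, so that $x^1=(N+\alpha,N-\alpha)$, and let $z^*=(\alpha,-\alpha)$ be the characteristic exit point on $\cL$ (the backward characteristic from $x^1$ in direction $(1,1)$). For any candidate exit at $z=(k,-k)$ with $k-\alpha\geq LN^{2/3}$, one must have $h_0(z)+G_{z,x^1}\geq h_0(z^*)+G_{z^*,x^1}$. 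Taylor expanding the explicit exponential-LPP shape $\E G_{(k,-k),x^1}=(\sqrt{N+\alpha-k}+\sqrt{N-\alpha+k})^2$ gives a deterministic deficit $\E G_{z^*,x^1}-\E G_{(k,-k),x^1}\geq c(k-\alpha)^2/N$, of order $L^2 N^{1/3}$ at the innermost scale.

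The event thus forces at least one of three fluctuations to exceed one third of the deficit: the centred upper deviation of $G_{(k,-k),x^1}$, the centred lower deviation of $G_{z^*,x^1}$, or the random-walk displacement $h_0(k,-k)-h_0(z^*)$. The first two are handled by the one-point KPZ tails for exponential LPP, namely $\P(G-\E G>sN^{1/3})\leq Ce^{-cs^{3/2}}$ and $\P(\E G-G>sN^{1/3})\leq Ce^{-cs^3}$; at $s=L^2$ each yields $\leq Ce^{-cL^3}$. For the third contribution, the increments $\sigma(X_\ell-Y_\ell)$ are i.i.d.\ centred sub-exponential with variance $8\sigma^2$, and $h_0(k,-k)-h_0(z^*)$ is a sum of $\sim LN^{2/3}$ such increments. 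Bernstein's inequality at threshold $\tfrac13 L^2 N^{1/3}$ then puts us in the sub-Gaussian regime (the condition $L\leq c N^{1/3}\sigma$ holds comfortably for $L=\log(\delta^{-1})$ and $N$ large), giving tail $\leq Ce^{-cL^3/\sigma^2}$. Thus the pointwise probability of an exit precisely at $z=(k,-k)$ with $k-\alpha\sim LN^{2/3}$ is $\leq Ce^{-cL^3}$.

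To upgrade these pointwise bounds to a uniform estimate over the half-line $k-\alpha>LN^{2/3}$ without picking up a polynomial-in-$N$ factor, I would peel dyadically into annuli $A_j=\{k:2^jLN^{2/3}\leq k-\alpha<2^{j+1}LN^{2/3}\}$ and bound $\P(\text{exit}\in A_j)\leq Ce^{-c(2^jL)^3}$ using two devices: (a) a L\'evy/reflection inequality to replace $\max_{k\in A_j}(h_0(k,-k)-h_0(z^*))$ by the value at the outer endpoint of $A_j$; and (b) the stationary-LPP sandwich of Lemma~\ref{cor:og} with densities $\rho_\pm=\tfrac12\pm\kappa(2^jL)N^{-1/3}$, which bounds $\max_{k\in A_j}(G_{(k,-k),x^1}-\E G_{(k,-k),x^1})$ from above by stationary-LPP quantities whose tails are already known with the correct cubic exponent. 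Summing the geometric series $\sum_{j\geq 0}e^{-c(2^jL)^3}$ is dominated by the innermost scale $j=0$ and yields the uniform bound $Ce^{-cL^3}$; setting $L=\log(\delta^{-1})$ produces $Q(\delta)=Ce^{-c(\log\delta^{-1})^3}$. The main obstacle is this last uniform-in-$k$ step; it is handled cleanly by inheriting the stationary-sandwich machinery already built in the preceding sections of the paper, together with standard random-walk reflection.
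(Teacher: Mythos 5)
Your argument is essentially sound, but it is worth knowing that the paper does not prove this proposition from scratch: its entire proof is a citation to Lemma~5.2(b) of~\cite{FO18} with the parameter substitutions $\tau=1$, $M=2^{-2/3}\tfrac34\delta$, $\alpha=2^{-2/3}\log(\delta^{-1})$, yielding $Q(\delta)\leq Ce^{-c(\alpha-M)^3}$. What you have written is, in effect, a reconstruction of the argument inside that cited lemma: the quadratic shape deficit $\E G_{z^*,x^1}-\E G_{(k,-k),x^1}\gtrsim (k-\alpha)^2/N$ (which in fact holds exactly, since the shape function along $\cL$ is $2N+2\sqrt{N^2-(k-\alpha)^2}$), the three-way split into upper/lower LPP tails and the random-walk fluctuation of $h_0$, the exponents $s^{3/2}$ at $s=L^2$ and the Bernstein sub-Gaussian regime both landing on $e^{-cL^3}$, and the dyadic peeling with Doob/reflection for the walk — all of this matches the structure of \cite{FO18}. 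The one step you should tighten is device (b) of the uniformity argument: Lemma~\ref{cor:og} is a statement about the ordering of \emph{geodesics} inside the cylinder, not a bound on $\max_{k\in A_j}(G_{(k,-k),x^1}-\E G_{(k,-k),x^1})$. The correct tool is the comparison at the level of passage times: for a tilted density $\rho_j=\tfrac12+\kappa 2^jLN^{-1/3}$ one has $h_0^{\rho_j}(k,-k)+G_{(k,-k)+e,x^1}\leq G^{\rho_j}_{\cL,x^1}$ for every $k$, so the maximum over the annulus is dominated by a single stationary line-to-point passage time minus a random walk with a favourable drift, whose moderate-deviation tails (Lemma~\ref{spc} / \cite{EJS20}) give $e^{-c(2^jL)^3}$ without any union bound over lattice points. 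With that substitution your sketch is a complete and correct, self-contained alternative to the paper's citation.
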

\begin{proof}
The estimates leading to the proofs are all contained in the proof of Lemma~5.2 of~\cite{FO18}, see part \emph{(b) Random initial conditions}. Replacing in that proof $\tau=1$, $M=\tilde M = 2^{-2/3} \tfrac34 \delta$ and $\alpha=2^{-2/3} \log(\delta^{-1})$ we obtain $Q(\delta)\leq \min\{C e^{-c (\alpha-M)^3},C e^{-c (\alpha-M)^4/\alpha}\}$. As $\alpha-M\simeq \alpha$ for small $\delta$, the result follows.
\end{proof}
\begin{figure}
\begin{center}
	\begin{tikzpicture}[scale=0.65, every node/.style={transform shape}]
	node/.style={transform shape}]
	\def\s{1.2}
	\draw [color=red,fill=red!20] (10,10) -- (10.5,9.5) -- (7.5,6.5)--(6.5,7.5) --(9.5,10.5) --(10,10) ;
	\draw [color=blue,fill=blue!20] (0,0) -- (-1.5,1.5) -- (2,5)--(5,2) --(1.5,-1.5) -- (0,0) ;
	\draw[red,line width=2pt] plot [smooth,tension=0.8] coordinates{(-1,1) (0,2) (2,3.6) (3,5) (4,5.5) (5,6) (6.2,6.7) (8,8) (9,9.2)};
	\draw[red,line width=2pt] plot [smooth,tension=0.8] coordinates{(-1,1) (0,2) (2,3.6) (3,5) (4,5.5) (5,6) (6.2,6.5) (8,7.5) (9,8.5)} ;
	\draw[red,line width=2pt] plot [smooth,tension=0.8] coordinates{(-1,1) (0,2) (2,3.6) (3,5) (4,5.5) (5,6) (6.2,6.7) (8,8.5) (8.7,9.6)};
	\draw[red,line width=2pt] plot [smooth,tension=0.8] coordinates{ (8,8) (9,8.7) (9.5,9.2)};
	
	\draw[line width=0.5pt] plot [smooth,tension=0.8] coordinates{(2,2) (3,4) (4,5) (5,6) (6.2,6.7) (8,8) (9,9.2)};
	\draw[line width=0.5pt] plot [smooth,tension=0.8] coordinates{(2,2) (3,4) (4,5) (5,6) (6.2,6.5) (8,7.5) (9,8.5)} ;
	\draw[line width=0.5pt] plot [smooth,tension=0.8] coordinates{(2,2) (3,4) (4,5) (5,6) (6.2,6.7) (8,8.5) (8.7,9.6)};
	\draw[line width=0.5pt] plot [smooth,tension=0.8] coordinates{ (8,8) (9,8.7) (9.5,9.2)};
	\draw [<->](9.8,10.8)--(10.8,9.8);
	\node [scale=\s,rotate=-45] at (10.6,10.6) {$\delta N^{2/3}$};
	\draw [<->](6.3,7.7)--(9.3,10.7);
	\node [scale=\s,rotate=45] at (7.5,9.5) {$\delta^{3/2}\log(\delta^{-1}) N$};
	\draw[<->] (2.2,5.2) -- (5.2,2.2);
	\node [scale=\s,rotate=-45] at (4.7,3.5) {$\log(\delta^{-1}) N^{2/3}$};
	\draw[<->] (-1.8,1.8) -- (1.7,5.3);
	\node [scale=\s,rotate=45] at (-0.7,3.7) {$N/4$};
	\draw[<->] (1.7,-1.7) -- (11.7,8.3);
	\node [scale=\s,rotate=45] at (6.5,2.5) {$N$};
	\draw [<->](0,10)--(0,0) -- (10,0);
 	\end{tikzpicture}
	\caption{Illustration of Theorems~\ref{thm:coal} and~\ref{thm:LBcoal}. With high probability, any geodesic tree (black curve) consisting of all geodesics starting from a fixed point in the blue cylinder and terminating at any point in the red cylinder, will agree in the red box with the stationary tree (red curve) of intensity $1/2$.}\label{Fig:boxes}
\end{center}
\end{figure}
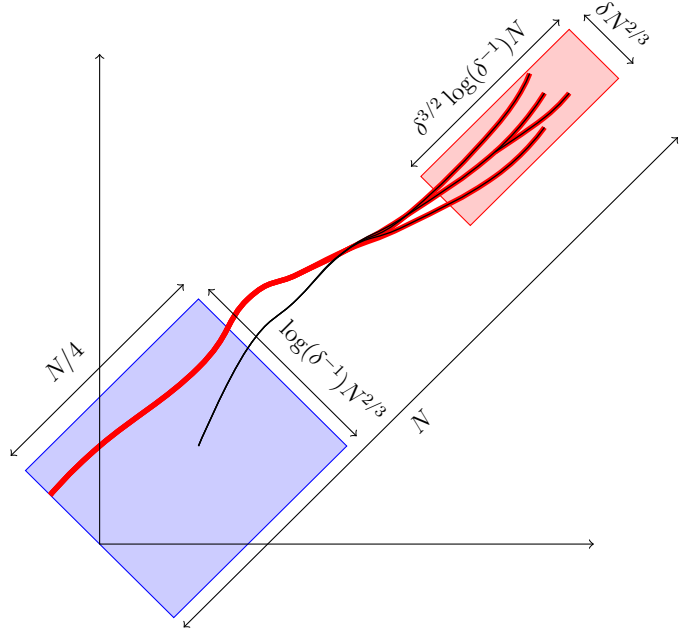

\section{Preliminaries}\label{SectPreliminaries}

\subsection{Some general notation}
We mention here some of the notations which will be used throughout the paper. We denote
$\Z_{\ge0}=\{0,1,2,3, \dotsc\}$ and $\Z_{>0}=\{1,2,3,\dotsc\}$. We use four standard vectors in $\R^2$, namely $\mathrm{e}_1=(1,0)$ and $\mathrm{e}_2=(0,1)$. We also denote $\mathrm{e}_3=(1,-1)$ and $\mathrm{e}_4=(1,1)$. The direction $\mathrm{e}_3$ represents the \emph{spatial direction}, while $\mathrm{e}_4$ represents the \emph{temporal direction}. Furthermore, for a point $x=(x_1,x_2)\in\R^2$ the $\ell^1$-norm is $\abs{x}=\abs{x_1} + \abs{x_2}$. We also use the partial ordering on $\R^2$: for $x=(x_1,x_2)\in\R^2$ and $y=(y_1,y_2)\in\R^2$, we write $x\le y$ if $x_1\le y_1$ and $x_2\le y_2$. Given two points $x,y\in\Z^2$ with $x\leq y$, we define the box $[x,y]=\{z\in\Z^2 | x\leq z\leq y\}$. For $u\in\Z^2$ we denote  $\Z^2_{\geq u}=\{x:x\geq u\}$.

Finally, for $\lambda>0$, $X\sim\mathrm{Exp}(\lambda)$ denotes a random variable $X$ which has exponential distribution with rate $\lambda$, in other words $P(X>t)=e^{-\lambda t}$ for $t\ge 0$, and thus the mean is $\E(X)=\lambda^{-1}$ and variance $\Var(X)=\lambda^{-2}$. To lighten notation, we do not write explicitly the integer parts, as our results are insensitive to shifting points by order $1$. For instance, for a $\xi=(\xi_1,\xi_2)\in\R^2$, $\xi N$ means $(\lfloor \xi_1 N\rfloor,\lfloor \xi_2 N\rfloor)$.

\subsection{Ordering of paths}
We construct two partial orders on directed paths in $\Z^2$.
	\begin{enumerate}
		\item[$\leq$:] For $x,y\in \Z^2$ we write $x \leq y$ if $y$ is above and to the right of $x$, i.e.
		\begin{equation}
		x_1 \leq y_1 \quad \text{and} \quad x_2\leq y_2.
		\end{equation}
		We also write $x < y$ if
		\begin{equation}
		x \leq y \quad \text{and} \quad x\neq y
		\end{equation}
		An up-right path is a (finite or infinite) sequence $\cY=(y_k)_{k}$ in $\Z^2$ such that $y_k-y_{k-1}\in\{\mathrm{e}_1,\mathrm{e}_2\}$ for all $k$. Let $\cU\cR$ be the set of up-right paths in $\Z^2$.
		If $A,B\subset \Z^2$, we write $A \leq B$ if
		\begin{equation}\label{og3}
		x \leq y \quad \forall x\in A\cap\cY,y\in B\cap\cY \quad \forall\cY\in \cU\cR.
		\end{equation}
		where we take the inequality to be vacuously true if one of the intersections in \eqref{og3} is empty.

		\item[$\preceq$:] For $x,y\in \Z^2$ we write $x \preceq y$ if $y$ is below and to the right of $x$, i.e.
		\begin{equation}
		x_1 \leq y_1 \quad \text{and} \quad x_2\geq y_2.
		\end{equation}
		We also write $x\prec y$ if
		\begin{equation}
		x \preceq y \quad \text{and} \quad x\neq y
		\end{equation}
		A down-right path is sequence $\cY=(y_k)_{k\in\Z}$ in $\Z^2$ such that $y_k-y_{k-1}\in\{\mathrm{e}_1,-\mathrm{e}_2\}$ for all $k\in\Z$. Let $\cD\cR$ be the set of infinite down-right paths in $\Z^2$.
		If $A,B\subset \Z^2$, we write $A \preceq B$ if
		\begin{equation}\label{og1}
		x \preceq y \quad \forall x\in A\cap\cY,y\in B\cap\cY \quad \forall\cY\in \cD\cR.
		\end{equation}
		where we take the inequality to be vacuously true if one of the intersections in \eqref{og1} is empty (see Figure~\ref{fig:ogs}).
	\end{enumerate}
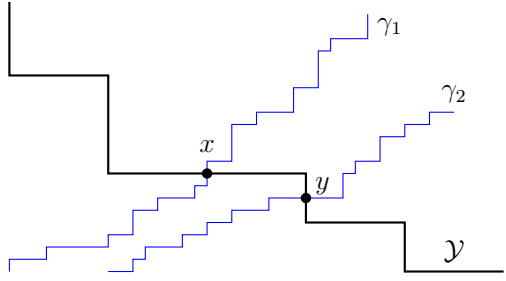
\begin{figure}[t]
\begin{center}
	\begin{tikzpicture}[scale=0.65, every node/.style={transform shape}]
	node/.style={transform shape}]
 \def\s{1.33}
 \draw [thick] (0,5.5) -- (0,4) -- (2,4) -- (2,2) -- (6,2) -- (6,1) -- (8,1) -- (8,0) -- (10,0);
 \node [scale=\s,above] at (9,0) {$\mathcal{Y}$};
 \draw [thin, color=blue] (0,0) -- (0,0.25)-- (0.25,0.25) -- (0.75,0.25) -- (0.75,0.5) -- (2,0.5) -- (2,0.75) -- (2.5,0.75) -- (2.5,1.25) -- (3,1.25) -- (3,1.5) -- (3.5,1.5) -- (3.75,1.5) -- (3.75,1.75) -- (4,1.75) -- (4,2.25) -- (4.5,2.25) -- (4.5,3) -- (5,3) -- (5,3.25) -- (5.75,3.25) -- (5.75,3.75) -- (6.25,3.75) -- (6.25,4.5) -- (6.5,4.5) -- (6.5,4.75) -- (7.25,4.75) -- (7.25,5.25);
 \node [scale=\s,right] at (7.25,5) {$\gamma_1$};
 \fill (4,2) circle (3pt);
 \node [scale=\s,above] at (4,2.25) {$x$};
 \draw [thin, color=blue] (2,0) -- (2.5,0)-- (2.5,0.25) -- (2.75,0.25) -- (2.75,0.5) -- (3.5,0.5) -- (3.5,0.75) -- (4,0.75) -- (4,1) -- (4.5,1) -- (4.5,1.25) -- (5.25,1.25) -- (5.25,1.5) -- (6.75,1.5) -- (6.75,2) -- (7,2) -- (7,2.25) -- (7.5,2.25) -- (7.5,2.75) -- (8,2.75) -- (8,3) -- (8.5,3) -- (8.5,3.25) -- (9,3.25);
 \node [scale=\s,above] at (9,3.25) {$\gamma_2$};
 \fill (6,1.5) circle (3pt);
 \node [scale=\s,right] at (6,1.75) {$y$};
	\end{tikzpicture}
\caption{The two geodesics $\gamma_1$ and $\gamma_2$ are ordered i.e., $\gamma_1\prec \gamma_2$. For any down-right path $\cY$ in $\Z^2$ the set of points $x=\cY\cap \gamma_1$ and $y=\cY\cap \gamma_2$ are ordered, i.e., $x\prec y$.}
\label{fig:ogs}
\end{center}
\end{figure}

\subsection{Stationary LPP}\label{SectStatLPP}
Stationary LPP on $\Z_{\geq 0}^2$ has been introduced in~\cite{PS01} by adding boundary terms on the $\mathrm{e}_1$ and $\mathrm{e}_2$ axis. In~\cite{BCS06} it was shown that it can be set up by using more general boundary domains. In this paper we are going to use two of them.

\paragraph{Boundary weights on the axis.}
For a base point $o=(o_1,o_2)\in \Z^2$ and a parameter value $\rho\in(0,1)$ we introduce the stationary last-passage percolation process $G^\rho_{o,\bullet}$ on $o+\Z_{\ge0}^2$. This process has boundary conditions given by two independent sequences
	\begin{equation}\label{IJ}
	\{I^\rho_{o+i\mathrm{e}_1}\}_{i=1}^{\infty} \quad\text{and}\quad
	\{J^\rho_{o+j\mathrm{e}_2}\}_{j=1}^{\infty}
	\end{equation}
	of i.i.d.\ random variables with $I^\rho_{o+\mathrm{e}_1}\sim\textrm{Exp}(1-\rho)$ and $J^\rho_{o+\mathrm{e}_2}\sim\textrm{Exp}(\rho)$.
 Put $G^\rho_{o,o}=0$ and on the boundaries
	\begin{equation}\label{Gr1} G^\rho_{o,\,o+\,k\mathrm{e}_1}=\sum_{i=1}^k I_{o+i\mathrm{e}_1}
	\quad\text{and}\quad
	G^\rho_{o,\,o+\,l\mathrm{e}_2}= \sum_{j=1}^l J_{o+j\mathrm{e}_2}.
 \end{equation}
	Then in the bulk
	for $x=(x_1,x_2)\in o+ \Z_{>0}^2$,
	\begin{equation}\label{Gr2}
	G^\rho_{o,\,x}= \max_{1\le k\le x_1-o_1} \; \Bigl\{ \;\sum_{i=1}^k I_{o+i\mathrm{e}_1} + G_{o+k\mathrm{e}_1+\mathrm{e}_2, \,x} \Bigr\}
	\bigvee
	\max_{1\le \ell\le x_2-o_2}\; \Bigl\{ \;\sum_{j=1}^\ell J_{o+j\mathrm{e}_2} + G_{o+\ell \mathrm{e}_2+\mathrm{e}_1, \,x} \Bigr\} .
	\end{equation}

\paragraph{Boundary weights on antidiagonal.}
The stationary model with density $\rho$ can be realized by putting boundary weights on $\cL$ as follows. Let $\{X_k,k\in\Z\}$ and $\{Y_k,k\in\Z\}$ be independent random variables with $X_k\sim\textrm{Exp}(1-\rho)$ and $Y_k\sim\textrm{Exp}(\rho)$. Then, define
\begin{equation}
h_0(k,-k)=\left\{
\begin{array}{ll}
\sum_{\ell=1}^k(X_\ell-Y_\ell),&\textrm{ for }k\geq 1,\\
0,&\textrm{ for }k=0,\\
-\sum_{\ell=k+1}^0 (X_\ell-Y_\ell),&\textrm{ for }k\leq -1.
\end{array}\right.
\end{equation}
Then the LPP defined by \eqref{eq1.2} with initial condition $h_0$ is stationary, that is, the increments $G^{h_0}_{\cL,x+\mathrm{e_1}}-G^{h_0}_{\cL,x}\sim\textrm{Exp}(1-\rho)$ as well as $G^{h_0}_{\cL,x+\mathrm{e_2}}-G^{h_0}_{\cL,x}\sim\textrm{Exp}(\rho)$ for all $x>\cL$.

 Next we define the \emph{exit points} of geodesics, these will play an important role in our analysis.
\begin{definition}[Exit points]\label{DefExitPoints}$ $ \\
(a) For a point $p\in o+\Z_{>0}^2$, let $Z_{o,p}$ be the signed exit point of the geodesic $\pi_{o,p}$ of $G_{o,p}$ from the west and south boundaries of $o+\Z_{>0}^2$. More precisely,
	\begin{equation}\label{exit2}
	Z^\rho_{o,p}=
	\begin{cases}
	\argmax{k} \bigl\{ \,\sum_{i=1}^k I_{o+i\mathrm{e}_1} + G_{o+k\mathrm{e}_1+\mathrm{e}_2, \,x} \bigr\} &\textrm{if } \pi_{o,p}\cap o+\mathrm{e}_1\neq\emptyset,\\
	-\argmax{\ell}\bigl\{ \;\sum_{j=1}^\ell J_{o+j\mathrm{e}_2} + G_{o+\ell \mathrm{e}_2+\mathrm{e}_1, \,x} \bigr\} &\textrm{if } \pi_{o,p}\cap o+\mathrm{e}_2\neq\emptyset.
	\end{cases}
	\end{equation}
(b) For a point $p>\cL$, we denote by $Z^{h_0}_{\cL,p}\in\Z$ the exit point of the LPP from $\cL$ with initial condition $h_0$, if the starting point of the geodesic from $\cL$ to $p$ is given by $(Z^{h_0}_{\cL,p},-Z^{h_0}_{\cL,p})$. In the case of the stationary model with parameter $\rho$, the exit point is denoted by $Z^{\rho}_{\cL,p}$.
\end{definition}

	As a consequence, the value $G^\rho_{o,x}$ can be determined by \eqref{Gr1} and the recursion
	\begin{equation}\label{recu}
	G^\rho_{o,x}=\omega_x+G^\rho_{o,x-\mathrm{e}_1}\vee G^\rho_{o,x-\mathrm{e}_2}.
	\end{equation}

	\subsection{Backward LPP}
	Next we consider LPP maximizing down-left paths. For $y\leq o$, define
	\begin{equation}\label{v:Gr}
	\widehat{G}_{o,y}=G_{y,o},
	\end{equation}
	and let the associated geodesic be denoted by $\hat{\pi}_{o,y}$. For each $o=(o_1,o_2)\in\Z^2$ and a parameter value $\rho\in(0,1)$ define a stationary last-passage percolation processes $\widehat{G}^\rho$ on $o+\Z^2_{\le0}$, with boundary variables on the north and east, in the following way. Let
	\begin{equation}\label{IJ hat}
	\{\hat{I}^\rho_{o-i\mathrm{e}_1}\}_{i=1}^{\infty}
	\quad\text{and}\quad
	\{\hat{J}^\rho_{o-j\mathrm{e}_2}\}_{j=1}^{\infty}
	\end{equation}
	be two independent sequences of i.i.d.\ random variables with marginal distributions $\hat{I}^\rho_{o-i\mathrm{e}_1}\sim\textrm{Exp}(1-\rho)$ and $\hat{J}^\rho_{o-j\mathrm{e}_2}\sim\textrm{Exp}(\rho)$. The boundary variables in \eqref{IJ} and those in \eqref{IJ hat} are taken independent of each other. Put $\widehat{G}^\rho_{o,\,o}=0$ and on the boundaries
	\begin{equation}\label{Gr11} \widehat{G}^\rho_{o,\,o-k\mathrm{e}_1}=\sum_{i=1}^k \hat{I}_{o-i\mathrm{e}_1}
	\quad\text{and}\quad
	\widehat{G}^\rho_{o,\,o-l\mathrm{e}_2}= \sum_{j=1}^l \hat{J}_{o-j\mathrm{e}_2}. \end{equation}
	Then in the bulk
	for $x=(x_1,x_2)\in o+ \Z_{<0}^2$,
	\begin{equation}\label{Gr12}
	\widehat{G}^\rho_{o,\,x}= \max_{1\le k\le o_1-x_1} \; \Bigl\{ \;\sum_{i=1}^k \hat{I}_{o-i\mathrm{e}_1} + \widehat{G}_{ \,o-k\mathrm{e}_1-\mathrm{e}_2,x} \Bigr\}
	\bigvee
	\max_{1\le \ell\le o_2-x_2}\; \Bigl\{ \;\sum_{j=1}^\ell \hat{J}_{o-j\mathrm{e}_2} + \widehat{G}_{ \,o-\ell \mathrm{e}_2-\mathrm{e}_1,x} \Bigr\} .
	\end{equation}
	For a southwest endpoint $p\in o+\Z_{<0}^2$, let $\widehat{Z}^\rho_{o,p}$ be the signed exit point of the geodesic $\hat{\pi}_{o,p}$ of $\widehat{G}^\rho_{o,p}$ from the north and east boundaries of $o+\Z_{<0}^2$. Precisely,
	\begin{equation}\label{exit}
	\widehat{Z}^\rho_{o,\,x}=
	\begin{cases}
	\argmax{k} \bigl\{ \,\sum_{i=1}^k \hat{I}_{o-i\mathrm{e}_1} + \widehat{G}_{\,o-k\mathrm{e}_1-\mathrm{e}_2,x} \bigr\}, &\text{if } \hat{\pi}_{o,x}\cap o-\mathrm{e}_1\neq\emptyset,\\
	-\argmax{\ell}\bigl\{ \;\sum_{j=1}^\ell \hat{J}_{o-j\mathrm{e}_2} + \widehat{G}_{\,o-\ell \mathrm{e}_2-\mathrm{e}_1,x} \bigr\}, &\text{if } \hat{\pi}_{o,x}\cap o-\mathrm{e}_2\neq\emptyset.
	\end{cases}
	\end{equation}

\subsection{Comparison Lemma}\label{SectCompLemma}
We are going to use the comparison between point-to-point LPP and stationary LPP using the lemma by Cator and Pimentel.
\begin{lemma}\label{Lem:Comparison}
Let $o=(0,0)$ and consider two points $p^1\preceq p^2$.\\
If $Z^\rho_{o,p^1}\geq 0$, then
\begin{equation}
 G_{o,p^2}-G_{o,p^1}\leq G^\rho_{o,p^2}-G^\rho_{o,p^1}.
\end{equation}
 If $Z^\rho_{o,p^2}\leq 0$, then
\begin{equation}
 G_{o,p^2}-G_{o,p^1}\geq G^\rho_{o,p^2}-G^\rho_{o,p^1}.
\end{equation}
\end{lemma}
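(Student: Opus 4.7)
The plan is to follow the classical Cator--Pimentel comparison argument. By symmetry (exchange of the $\mathrm{e}_1$ and $\mathrm{e}_2$ axes), it suffices to prove the first inequality. I set $z:=Z^\rho_{o,p^1}\geq 1$, so that the stationary geodesic to $p^1$ exits through the horizontal axis at $(z,0)$. This yields the decomposition
\[
G^\rho_{o,p^1}=\sum_{i=1}^{z}I^\rho_{i\mathrm{e}_1}+G_{z\mathrm{e}_1+\mathrm{e}_2,\,p^1},
\]
while using the same exit point at $(z,0)$ for $p^2$ is only one specific path strategy, giving the lower bound
\[
G^\rho_{o,p^2}\geq\sum_{i=1}^{z}I^\rho_{i\mathrm{e}_1}+G_{z\mathrm{e}_1+\mathrm{e}_2,\,p^2}.
\]
Subtracting the two relations cancels the boundary contribution, and the first inequality is reduced to the bulk ``crossing'' inequality: with $u:=o$ and $v:=z\mathrm{e}_1+\mathrm{e}_2$ (hence $u\leq v$), I need to show
\[
G_{v,p^2}+G_{u,p^1}\geq G_{v,p^1}+G_{u,p^2}.
\]

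I would then prove this crossing inequality by a path-exchange argument on the bulk geodesics $\pi_{u,p^2}$ and $\pi_{v,p^1}$. Tracking their horizontal coordinates along successive antidiagonals in the common range, the orderings $u\leq v$ and $p^1\preceq p^2$ force $\pi_{u,p^2}$ to begin to the left of $\pi_{v,p^1}$ but to end to its right, so by a discrete intermediate-value argument the two geodesics must share at least one lattice vertex $q$. Splitting each geodesic at $q$ and re-joining the two halves in the opposite pairing produces up-right paths $u\to p^1$ and $v\to p^2$ whose combined $\omega$-sum equals that of the pair $(\pi_{u,p^2},\pi_{v,p^1})$. Since these reassembled paths are merely candidates in the maxima defining $G_{u,p^1}$ and $G_{v,p^2}$, the swap gives $G_{u,p^2}+G_{v,p^1}\leq G_{u,p^1}+G_{v,p^2}$, i.e.\ the crossing inequality and hence the lemma.

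The hard part will be the geodesic-intersection step: two generic up-right paths with the given endpoints need not share a vertex, so one must carefully exploit \emph{both} the componentwise order $u\leq v$ and the down-right order $p^1\preceq p^2$ to force the horizontal-coordinate difference between $\pi_{u,p^2}$ and $\pi_{v,p^1}$ to change sign along the common antidiagonals. The second inequality of the lemma (under $Z^\rho_{o,p^2}\leq 0$) is handled by the mirror argument, decomposing instead along the $\mathrm{e}_2$-axis using the exit point of the stationary geodesic to $p^2$; the sign of the resulting inequality flips because now it is $G^\rho_{o,p^1}$ that is bounded below by the same-exit strategy rather than $G^\rho_{o,p^2}$.
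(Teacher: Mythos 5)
The paper does not prove this lemma itself (it cites Lemma~1 of \cite{CP15b} and its relatives), so your proposal has to stand on its own, and it does not: the reduction to the ``crossing inequality'' $G_{v,p^2}+G_{u,p^1}\geq G_{v,p^1}+G_{u,p^2}$ with $u=(0,0)$ and $v=z\mathrm{e}_1+\mathrm{e}_2$ is a reduction to a \emph{false} statement. The intersection step fails because $u\leq v$ is the wrong order for forcing two up-right paths to meet: one needs the starting points ordered by $\preceq$ opposite to the endpoints, and here $(0,0)\not\preceq(z,1)$ since the second coordinates go the wrong way. Concretely, $\pi_{u,p^2}$ can pass strictly \emph{below} $v$ along the row $x_2=0$ and never touch $\pi_{v,p^1}$. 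Take $z=1$, so $v=(1,1)$, and $p^1=(1,2)$, $p^2=(2,1)$, with all weights equal to $1$ except $\omega_{(2,0)}=1000$. Then $\pi_{v,p^1}=\{(1,1),(1,2)\}$, $\pi_{u,p^2}=\{(0,0),(1,0),(2,0),(2,1)\}$ are disjoint, and
\begin{equation*}
G_{v,p^2}+G_{u,p^1}=2+4=6<1005=2+1003=G_{v,p^1}+G_{u,p^2},
\end{equation*}
so your inequality fails by an arbitrarily large amount. Your closing remark that one ``must carefully exploit both orders to force the horizontal-coordinate difference to change sign along the common antidiagonals'' is exactly the step that cannot be carried out: on the antidiagonal through $v$ the path $\pi_{u,p^2}$ may already be to the \emph{right} of $v$ (at $(z+1,0)$ in the example), so there is no sign change.

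The standard argument avoids this by comparing the \emph{bulk portions} of the two relevant geodesics, both of which live in $\{x_1,x_2\geq1\}$: the stationary geodesic to $p^1$ enters the bulk at $(z,1)$, and the point-to-point geodesic to $p^2$ enters the bulk at $(\zeta,1)$ or $(1,\eta)$, either of which \emph{is} $\preceq(z,1)$ provided its exit point from the axes satisfies $\zeta\leq z$; in that case the two bulk paths must share a vertex $q$ (neither can escape below the row $x_2=1$ or above the column $x_1=p^1_1$), and the path-swap at $q$ — performed on the full decompositions $G^\rho_{o,p^1}=\sum_{i\leq z}I_i+G_{(z,1),p^1}$ and $G_{o,p^2}=G_{o,q}+G_{q,p^2}-\omega_q$ — yields the claim. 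The case $\zeta>z$, which is exactly your counterexample scenario, has to be treated separately and is where the precise convention for the weights on the axes matters; this is why the general form of the comparison (the paper's Lemma~\ref{Lem:ComparisonB}, and Lemma~2.1 of \cite{Pim18}) puts both boundary data on a common down-right path and assumes the two-sided condition $Z^\rho_{\cL,p^1}\geq Z^{h_0}_{\cL,p^2}$, which for the delta initial condition reads $Z^\rho_{\cL,p^1}\geq 0$. Reformulating Lemma~\ref{Lem:Comparison} on the antidiagonal $\cL$ (or comparing with $G_{(1,1),\cdot}$ rather than $G_{(0,0),\cdot}$) removes the obstruction; as written, your route cannot be repaired without this case analysis.
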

Clearly by reversion of the space we can use this comparison lemma also for backwards LPP.

\begin{lemma}\label{Lem:ComparisonB}
Consider two points $p^1\preceq p^2$ with $p^1,p^2>\cL$.\\
If $Z^\rho_{\cL,p^1}\geq Z^{h_0}_{\cL,p^2}$, then
\begin{equation}
 G^{h_0}_{\cL,p^2}-G^{h_0}_{\cL,p^1}\leq G^\rho_{\cL,p^2}-G^\rho_{\cL,p^1}.
\end{equation}
 If $Z^\rho_{\cL,p^2}\leq Z^{h_0}_{\cL,p^1}$, then
\begin{equation}
 G^{h_0}_{\cL,p^2}-G^{h_0}_{\cL,p^1}\geq G^\rho_{\cL,p^2}-G^\rho_{\cL,p^1}.
\end{equation}
\end{lemma}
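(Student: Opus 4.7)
The plan is to mirror the Cator-Pimentel crossing-of-geodesics argument behind Lemma~\ref{Lem:Comparison}: the only novelty is that the role played by the origin in the point-to-point setting is now taken by the two random exit points on $\cL$. Write $Y_i:=Z^{h_0}_{\cL,p^i}$ and $Z_i:=Z^\rho_{\cL,p^i}$, and start from the variational formula
\[
G^{h_0}_{\cL,p}=\max_{k}\bigl\{h_0(k,-k)+G_{(k,-k),p}-\omega_{(k,-k)}\bigr\},
\]
with the maximum taken over those $k$ with $(k,-k)\le p$ and attained at $k=Z^{h_0}_{\cL,p}$ (and analogously for the $\rho$ model).

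For the first claim, I would plug $k=Y_2$ into this formula: equality at $p^2$ and a lower bound at $p^1$; subtracting kills the $h_0(Y_2,-Y_2)$ and $\omega_{(Y_2,-Y_2)}$ terms and produces
\[
G^{h_0}_{\cL,p^2}-G^{h_0}_{\cL,p^1}\le G_{(Y_2,-Y_2),p^2}-G_{(Y_2,-Y_2),p^1}.
\]
The symmetric step with $k=Z_1$ in the stationary formula gives
\[
G^{\rho}_{\cL,p^2}-G^{\rho}_{\cL,p^1}\ge G_{(Z_1,-Z_1),p^2}-G_{(Z_1,-Z_1),p^1}.
\]
Before writing either display one should check that all four bulk LPP terms are well defined, i.e.\ that both $(Y_2,-Y_2)$ and $(Z_1,-Z_1)$ lie southwest of both $p^1$ and $p^2$: this is immediate from the hypothesis $Z_1\ge Y_2$, from $p^1\preceq p^2$, and from the fact that $Y_2$ (resp.\ $Z_1$) is a valid exit for $p^2$ (resp.\ $p^1$).

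It remains to chain the two bounds via the standard planarity/super-additivity inequality for bulk LPP: whenever $o\preceq o'$ and $q\preceq q'$ (with all points admissibly ordered under $\le$),
\[
G_{o,q}+G_{o',q'}\ge G_{o,q'}+G_{o',q},
\]
proved by the familiar observation that the geodesics from $o$ to $q'$ and from $o'$ to $q$ must share a lattice point $c$, so swapping their sub-paths past $c$ produces valid up-right paths from $o$ to $q$ and from $o'$ to $q'$. Applying this with $o=(Y_2,-Y_2)\preceq(Z_1,-Z_1)=o'$ (the ordering is exactly $Y_2\le Z_1$) and $q=p^1\preceq p^2=q'$ gives
\[
G_{(Y_2,-Y_2),p^2}-G_{(Y_2,-Y_2),p^1}\le G_{(Z_1,-Z_1),p^2}-G_{(Z_1,-Z_1),p^1},
\]
and concatenating the three inequalities yields the first claim.

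The second claim is the mirror image: plug $k=Z_2$ into the $\rho$ formula and $k=Y_1$ into the $h_0$ one (equalities at $p^2$ and $p^1$, respectively), then apply the same planarity inequality to $(Z_2,-Z_2)\preceq(Y_1,-Y_1)$ (which is exactly the hypothesis $Z_2\le Y_1$) with endpoints $p^1\preceq p^2$, to obtain the reverse chain. No step is genuinely difficult; the only subtlety is keeping track of which exit is the optimiser and of the direction of each inequality, but these are fully dictated by the hypothesised ordering of the exit points.
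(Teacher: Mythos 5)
Your proof is correct: the variational decomposition over exit points, the admissibility check that $(Y_2,-Y_2)$ and $(Z_1,-Z_1)$ (resp.\ $(Z_2,-Z_2)$ and $(Y_1,-Y_1)$) lie southwest of both endpoints, and the quadrangle/crossing inequality all chain together exactly as you claim. The paper itself gives no proof of Lemma~\ref{Lem:ComparisonB} --- it only cites Lemma~2.1 of~\cite{Pim18} and Lemma~3.5 of~\cite{FGN17} --- and your argument is essentially the standard Cator--Pimentel crossing-of-geodesics proof used in those references, so this is a faithful, self-contained substitute.
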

For $p^1_2=p^2_2$, Lemma~\ref{Lem:Comparison} is proven as Lemma~1 of~\cite{CP15b}, while Lemma~\ref{Lem:ComparisonB} is Lemma~2.1 of~\cite{Pim18}. The generalization to the case of geodesics starting from $\cL$ (or from any down-right paths) is straightforward, see e.g.\ Lemma~3.5 of~\cite{FGN17}.

\section{Local Stationarity}\label{SectLocalStat}

\subsection{Localization over a time-span $\tau N$.}
In this section we are going to prove Theorem~\ref{thm:locCylinder}.
We shall need the following estimates on the tail of the exit point of a stationary process. For a density $\rho\in (0,1)$ we associate a direction
\begin{equation}
\xi(\rho)=\left(\frac{(1-\rho)^2}{(1-\rho)^2+\rho^2},\frac{\rho^2}{(1-\rho)^2+\rho^2}\right)
\end{equation}
and, vice versa, to each direction $\xi=(\xi_1,\xi_2)$ corresponds a density
\begin{equation}
 \rho(\xi)=\frac{\sqrt{\xi_2}}{\sqrt{\xi_1}+\sqrt{\xi_2}}.
\end{equation}

\begin{lemma}[Theorem 2.5 and Proposition 2.7 of \cite{EJS20}]\label{spc}
 Let $\e\in (0,1]$. Then there exists $N_0(\e)$, $c_0(\e)$, $r_0(\e)>0$ such that for every direction $\xi$ with $\e\leq \xi_2/\xi_1\leq 1/\e$, $N\geq N_0$ and $r\geq r_0$:
	\begin{align}
		\P(|Z^{\nu}_{o,\xi N}|> rN^{2/3})&\leq e^{-c_0r^3},\label{spc1}\\
		\P(Z^{\nu}_{o,\xi N-rN^{2/3}\mathrm{e}_3}> 0)&\leq e^{-c_0r^3},\label{spc2}\\
		\P(Z^{\nu}_{o,\xi N+rN^{2/3}\mathrm{e}_3}< 0)&\leq e^{-c_0r^3},\label{spc3}
	\end{align}
for all densities $\nu$ satisfying $|\nu-\rho(\xi)|\leq N^{-1/3}$.
\end{lemma}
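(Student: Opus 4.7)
The plan is to reduce all three exit-point bounds to one-point cubic moderate deviation estimates on scale $N^{1/3}$ for point-to-point and stationary exponential LPP, namely upper tails of the form $\exp(-c s^{3/2})$ and lower tails of the form $\exp(-c s^3)$ on deviations of size $s N^{1/3}$. These follow from the Laguerre-ensemble representation of $G_{o,\xi N}$ together with the bounds of~\cite{LR10}.

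I would treat \eqref{spc1} first at the strictly characteristic density $\nu=\rho(\xi)$, by comparing the stationary LPP at $\nu$ with a shifted stationary LPP at a nearby density $\nu' = \nu + c\,r N^{-1/3}$ coupled via common boundary weights. The characteristic direction of $\nu'$ is $\xi+\Theta(r N^{-1/3})\mathrm{e}_3$, so $\xi N$ is off-characteristic for $\nu'$ by a spatial distance $\Theta(r N^{2/3})$. Using the Cator--Pimentel comparison (Lemma~\ref{Lem:Comparison}) together with the explicit expectation $\E[G^\nu_{o,\xi N}]=\xi_1 N/(1-\nu) + \xi_2 N/\nu$ — whose derivative in $\nu$ vanishes exactly at $\nu=\rho(\xi)$ — one sees that the mean gap $\E[G^{\nu'}_{o,\xi N}]-\E[G^\nu_{o,\xi N}]$ is $\Theta(r^2 N^{1/3})$. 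On the event $\{Z^\nu_{o,\xi N}>r N^{2/3}\}$, a rearrangement via the Cator--Pimentel sandwich forces this deterministic gap to be overcome by random fluctuations; using the upper-tail of $G^{\nu'}$ and the cubic lower-tail of the point-to-point LPP $G_{o,\xi N}$ from~\cite{LR10}, the probability is bounded by $\exp(-c r^3)$. The symmetric estimate $\P(Z^\nu<-r N^{2/3})\leq e^{-c r^3}$ uses $\nu'=\nu-c\,r N^{-1/3}$ and the $I\leftrightarrow J$ symmetry.

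For the off-characteristic band $|\nu-\rho(\xi)|\leq N^{-1/3}$ the characteristic direction is shifted by only $\Theta(N^{-1/3})$, which adds at most $r N^{1/3}=o(r^2 N^{1/3})$ to the deterministic mean gap, so \eqref{spc1} survives uniformly in this band with a slightly smaller constant. For \eqref{spc2} and \eqref{spc3}, shifting the endpoint $\xi N$ by $\mp r N^{2/3}\mathrm{e}_3$ shifts the associated characteristic density by $\Theta(r N^{-1/3})$; matching $\nu$ to this shifted characteristic by monotonicity of $Z$ in $\nu$ (from the standard queueing coupling of boundary weights), the events $\{Z^\nu_{o,\xi N - r N^{2/3}\mathrm{e}_3}>0\}$ and $\{Z^\nu_{o,\xi N + r N^{2/3}\mathrm{e}_3}<0\}$ reduce to variants of \eqref{spc1} at the shifted target, controlled by the same cubic bound.

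\textbf{Main obstacle.} The crux is producing the \emph{cubic} decay $e^{-c r^3}$ rather than the weaker Gaussian or $e^{-c r^{3/2}}$ bound one would obtain using only the upper-tail one-point estimate. This requires the sharp \emph{lower}-tail bound of~\cite{LR10} for exponential LPP, which is the deepest analytic input and controls the event that the point-to-point LPP value is atypically small. The rest of the argument is careful bookkeeping of the Taylor expansion of $\nu\mapsto \xi_1/(1-\nu)+\xi_2/\nu$ near its minimum and of the monotone couplings across densities, uniformly over $\xi$ with $\xi_2/\xi_1\in[\e,1/\e]$.
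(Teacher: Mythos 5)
First, a point of comparison: the paper does not prove Lemma~\ref{spc} at all --- it is quoted directly from Theorem~2.5 and Proposition~2.7 of \cite{EJS20} --- so there is no internal argument to measure your sketch against. Judged on its own, your proposal has the right skeleton: the second-order Taylor expansion of $\nu\mapsto \xi_1/(1-\nu)+\xi_2/\nu$ at its minimum $\rho(\xi)$ does convert a density perturbation of order $rN^{-1/3}$ (equivalently an endpoint shift of order $rN^{2/3}$) into a deterministic gap of order $r^2N^{1/3}$, your reduction of \eqref{spc2}--\eqref{spc3} to \eqref{spc1} by translation invariance along the axes is legitimate (the paper uses the same device in the proof of Lemma~\ref{lem:sb}), and one-point tail estimates evaluated at deviation $r^2N^{1/3}$ are indeed what closes the argument.

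The genuine gap is the central step for \eqref{spc1}. You assert that on $\{Z^\nu_{o,\xi N}>rN^{2/3}\}$ ``a rearrangement via the Cator--Pimentel sandwich forces this deterministic gap to be overcome by random fluctuations,'' but no inequality is exhibited, and Lemma~\ref{Lem:Comparison} is not the right tool: it compares \emph{increments} of two LPP processes conditionally on the sign of an exit point, and does not by itself turn the event $\{Z^\nu>rN^{2/3}\}$ into a statement about passage-time values. What is actually needed is the decomposition of $G^\nu_{o,\xi N}$ over its exit point (or the coupled-boundary comparison $I^{\lambda}_i\ge I^{\nu}_i$ for $\lambda>\nu$), which yields an explicit inclusion of $\{Z^\nu>k\}$ into an event of the form ``a sum of $k=rN^{2/3}$ centered boundary weights plus centered passage-time terms exceeds $cr^2N^{1/3}$,'' followed by a union bound over the three terms; without some such inequality the argument does not close. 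Relatedly, you misattribute the source of the cubic exponent: it does \emph{not} come from the $e^{-cs^3}$ lower tail of \cite{LR10}, which at the relevant deviation $s=r^2$ would give the (unnecessarily strong) $e^{-cr^6}$. The exponent $r^3$ arises from (i) the Gaussian-scale large deviation of the boundary-weight sum, $\exp(-c(r^2N^{1/3})^2/(rN^{2/3}))=e^{-cr^3}$, and (ii) the \emph{right}-tail bound $e^{-cs^{3/2}}$ at $s=r^2$; any lower-tail control with exponent at least $3/2$ suffices for the remaining term. Indeed, the point of \cite{EJS20} is precisely to obtain these estimates from stationarity and right-tail considerations without the integrable input of \cite{LR10}, so your route is heavier than necessary and mislabels which tail is doing the work, even though the final bound would come out correctly if the missing inequality were supplied.
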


Using Lemma~\ref{spc} one can control the path of a point-to-point geodesic.
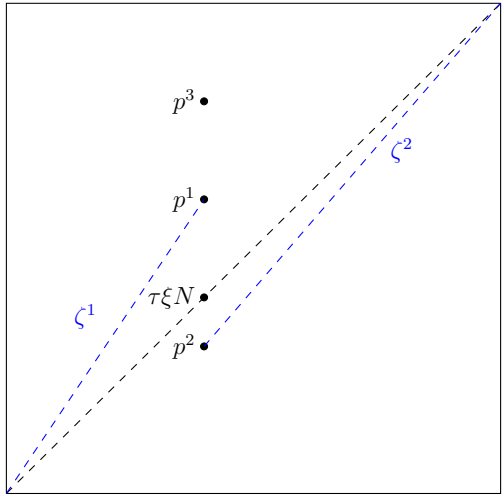
\begin{figure}
	\begin{center}
		\begin{tikzpicture}[scale=0.65, every node/.style={transform shape}]
		node/.style={transform shape}]
		\def\s{1.2}
		\draw  (0,0) -- (0,10) -- (10,10)--(10,0)--(0,0);
		\draw [dashed](0,0)--(10,10);
		\filldraw (4,4) circle (2pt);
		\node [scale=\s,left] at (4,4) {$\tau\xi N$};
		\filldraw (4,6) circle (2pt);
		\node [scale=\s,left] at (4,6) {$p^1$};
		\draw [dashed,blue] (0,0)--(4,6);
		\node [scale=\s,left,blue] at (2,3.6) {$\zeta^1$};
		\filldraw (4,8) circle (2pt);
		\node [scale=\s,left] at (4,8) {$p^3$};
		\filldraw (4,3) circle (2pt);
		\node [scale=\s,left] at (4,3) {$p^2$};
		\draw [dashed,blue] (4,3)--(10,10);
		\node [scale=\s,blue] at (8,7) {$\zeta^2$};
		\end{tikzpicture}
		\caption{Illustration of the geometry of the points and characteristics that appear in the proof of Theorem \ref{thm:loc}.}\label{fig:hfig}
	\end{center}
\end{figure}
\begin{theorem}\label{thm:loc}
Let $\e\in (0,1]$. Then there exist $N_0(\e)$ and $c_1(\e)$ such that for $\xi$ satisfying $\e\leq \xi_2/\xi_1\leq 1/\e$,
	\begin{equation}
	\P\big(\Gamma_{ \tau \xi_1 N}(\pi_{o,\xi N})>M(\tau N)^{2/3} \big)\leq e^{-c_1M^3}
	\end{equation}
for all $\tau N\geq N_0$ and all $M\leq (\tau N)^{1/3}/\log(N)$.
\end{theorem}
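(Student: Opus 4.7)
My plan is to reduce the midpoint deviation event for the point-to-point geodesic to exit-point events in two auxiliary stationary LPPs (one forward from $o$, one backward from $\xi N$), for which Lemma~\ref{spc} delivers the cubic-exponential tail. By a union bound and the forward/backward symmetry of point-to-point LPP, it is enough to bound by $\tfrac12 e^{-c_1 M^3}$ the probability that $\pi_{o,\xi N}$ passes through some $(\tau\xi_1 N,k)$ with $k\geq\tau\xi_2 N+M(\tau N)^{2/3}$; the downward deviation is handled analogously.

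First I set up the auxiliary points $p^1=\tau\xi_1 N\,\mathrm{e}_1+(\tau\xi_2 N+M(\tau N)^{2/3})\mathrm{e}_2$ and $p^2=\tau\xi_1 N\,\mathrm{e}_1+(\tau\xi_2 N-M(\tau N)^{2/3})\mathrm{e}_2$, and pick densities $\zeta^1=\rho(p^1/\abs{p^1})$ (so the forward $\zeta^1$-characteristic from $o$ hits $p^1$) and $\zeta^2=\rho((\xi N-p^2)/\abs{\xi N-p^2})$ (so the backward $\zeta^2$-characteristic from $\xi N$ hits $p^2$); these are exactly the $\zeta^1,\zeta^2$ in the figure. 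A direct computation gives $|\zeta^1-\rho(\xi)|,|\zeta^2-\rho(\xi)|=\Theta(M/(\tau N)^{1/3})$. Under the hypothesis $M\leq(\tau N)^{1/3}/\log N$ this lies within $(\tau N)^{-1/3}$ of $\rho(\xi)$, so Lemma~\ref{spc} applies in both auxiliary models at scale $\tau N$.

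The core step will be the deterministic inclusion
$$\{\pi_{o,\xi N}\text{ passes above }p^1\text{ at column }\tau\xi_1 N\}\subseteq\{Z^{\zeta^1}_{o,p^1}<0\}\cup\{\widehat{Z}^{\zeta^2}_{\xi N,p^2}>0\},$$
which I would derive via Lemma~\ref{Lem:Comparison}. On $\{Z^{\zeta^1}_{o,p^1}\geq0\}$, the comparison lemma forces $G_{o,q}-G_{o,p^1}\leq G^{\zeta^1}_{o,q}-G^{\zeta^1}_{o,p^1}$ for all $q\succeq p^1$, which pins $\pi_{o,\xi N}$ close to the $\zeta^1$-characteristic running through $p^1$ and prevents it from climbing far above $p^1$ at column $\tau\xi_1 N$; symmetrically, on $\{\widehat{Z}^{\zeta^2}_{\xi N,p^2}\leq0\}$ the backward comparison prevents the geodesic from bending down sharply after this column so as to end at $\xi N$. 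Granted the inclusion, applying \eqref{spc2}--\eqref{spc3} with $r\asymp M$ at scale $\tau N$ to each of the two exit-point events produces the target bound $e^{-c_1 M^3}$.

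The hard part will be the inclusion itself. The comparison lemma yields useful increment inequalities only on the event that a specific exit point has a definite sign, so one has to combine the forward and backward comparisons to sandwich $\pi_{o,\xi N}$ between the two stationary geodesics $\pi^{\zeta^1}_{o,\cdot}$ and $\pi^{\zeta^2}_{\cdot,\xi N}$ in a neighbourhood of column $\tau\xi_1 N$, and then use the geodesic ordering $\preceq$ to read off the height of $\pi_{o,\xi N}$ at that column; the choice of $\zeta^1,\zeta^2$ together with the $O((\tau N)^{2/3})$ localization of stationary geodesics around their characteristics (again by Lemma~\ref{spc}) then forces the height to lie in $[\tau\xi_2 N-M(\tau N)^{2/3},\tau\xi_2 N+M(\tau N)^{2/3}]$. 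Everything else—matching scales in Lemma~\ref{spc}, the union bound over upper and lower tails, and the symmetry argument—is routine.
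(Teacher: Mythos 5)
Your first-layer reduction --- sandwiching the point-to-point geodesic between a forward stationary model from $o$ and a backward one from $\xi N$, and invoking Lemma~\ref{spc} for the exit points --- is indeed the strategy of the paper's proof, but there is a genuine gap exactly where you declare the remainder routine. The claimed inclusion
\begin{equation*}
\{\pi_{o,\xi N}\text{ passes above }p^1\text{ at column }\tau\xi_1N\}\subseteq\{Z^{\zeta^1}_{o,p^1}<0\}\cup\{\widehat Z^{\zeta^2}_{\xi N,p^2}>0\}
\end{equation*}
is not deterministic, and is in fact false as an event inclusion. On the good exit-point event the comparison lemma only yields a domination of increments along the column: writing $J_i$ (resp.\ $\widehat J_i$) for the upward increments of $G_{o,\cdot}$ (resp.\ of the backward field $\widehat G_{\xi N,\cdot}$), one gets $J_i-\widehat J_i\leq J^{\rho_1}_i-\widehat J^{\rho_2}_i$, so the deviation event is contained in $\{\sup_{i\geq \frac M2(\tau N)^{2/3}}W_i>0\}$ with $W_i=\sum_k(J^{\rho_1}_k-\widehat J^{\rho_2}_k)$ a random walk whose drift is negative but only of order $(\rho_1-\rho_2)\asymp M(\tau N)^{-1/3}$. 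That this supremum stays nonpositive is a probabilistic event, not a consequence of the sandwiching; proving $\P(\sup_i W_i>0)\leq e^{-cM^3}$ is the heart of the paper's argument (an exponential Tchebishev bound for $W_{\frac M2(\tau N)^{2/3}}$ together with the ladder-height estimate of Lemma~\ref{lem:crw} for the subsequent supremum, see \eqref{ub5}--\eqref{eq3.20}), and this step is entirely absent from your proposal.

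A second, related defect is your choice of densities. If $\zeta^1$ is tuned so that the forward characteristic from $o$ hits $p^1$ exactly, then $Z^{\zeta^1}_{o,p^1}$ is essentially symmetric about $0$ and $\P(Z^{\zeta^1}_{o,p^1}<0)$ is of order $1/2$, not $e^{-cM^3}$: the bounds \eqref{spc2}--\eqref{spc3} are useful only when the endpoint at which the exit point is evaluated is displaced by $rN^{2/3}$ \emph{away from} the characteristic target. This is why the paper aims the two characteristics at points at heights $+\tfrac M4(\tau N)^{2/3}$ and $-\tfrac M8\tfrac{1-\tau}{\tau}(\tau N)^{2/3}$ but evaluates both exit points at a third point $p^3$ at height $+\tfrac M2(\tau N)^{2/3}$; these $\Theta(M(\tau N)^{2/3})$ offsets simultaneously produce the $e^{-cM^3}$ exit-point bounds and create the density gap $\rho_1>\rho_2$ that makes $W$ drift downward. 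As written, your two exit-point events have probability of order one and the union bound is vacuous, so both the quantitative input and the key random-walk step need to be repaired.
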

\begin{proof}
We will show in detail that
\begin{equation}
	\P\Big(\Gamma^u_{\tau \xi_1 N}(\pi_{o^1,o^2})>\tau \xi_2N +M (\tau N)^{2/3}\Big)\leq e^{-c_1M^3},
\end{equation}
where $o^1=o$ and $o^2=\xi N$. Similarly one proves
\begin{equation}
	\P\Big(\Gamma^l_{\tau \xi_1 N}(\pi_{o^1,o^2})<\tau \xi_2N -M (\tau N)^{2/3}\Big)\leq e^{-c_1M^3}.
\end{equation}
Then Theorem~\ref{thm:loc} follows directly from the definition of $\Gamma_{\tau \xi_1 N}(\pi_{o,\xi N})$. Set the points (see Figure \ref{fig:hfig})
\begin{equation}
\begin{aligned}
	p^1&=\tau N \xi+\tfrac M4(\tau N)^{2/3}\mathrm{e}_2,\\
	p^2&=\tau N \xi-\tfrac M8 \tfrac{1-\tau}{\tau} (\tau N)^{2/3}\mathrm{e}_2,\\
	p^3&=\tau N \xi+\tfrac M2(\tau N)^{2/3}\mathrm{e}_2,
\end{aligned}
\end{equation}
and the characteristics associated with $(o^1,p^1)$ and $(o^2,p^2)$
\begin{equation}
\begin{aligned}
	\zeta^1&=(\tau \xi_1 N, \tau \xi_2 N + \tfrac M4 (\tau N)^{2/3}),\\
	\zeta^2&=((1-\tau) \xi_1 N, (1-\tau) \xi_2 N + \tfrac M8 \tfrac{1-\tau}{\tau} (\tau N)^{2/3}).
\end{aligned}
\end{equation}
The associated densities are
\begin{equation}
\begin{aligned}\label{den}
	\rho_1&=\frac{\sqrt{\tau \xi_2 N+\tfrac M4 (\tau N)^{2/3}}}{\sqrt{\tau \xi_1 N}+\sqrt{\tau \xi_2 N+\tfrac M4 (\tau N)^{2/3}}},\\
	\rho_2&=\frac{\sqrt{(1-\tau) \xi_2 N +\tfrac M8 \tfrac{1-\tau}{\tau} (\tau N)^{2/3}}}{\sqrt{(1-\tau) \xi_1 N}+\sqrt{(1-\tau) \xi_2 N + \tfrac M8 \tfrac{1-\tau}{\tau} (\tau N)^{2/3}}}.
\end{aligned}
\end{equation}
Note that by \eqref{spc2}--\eqref{spc3} there exists $c_0>0$ such that
\begin{equation}
\begin{aligned}\label{zub}
	\P\big(Z^{\rho_1}_{o^1,p^3}>0\big)\leq e^{-c_0M^{3}},\\
	\P\big(\hat{Z}^{\rho_2}_{o^2,p^3}<0\big)\leq e^{-c_0M^{3}}.
\end{aligned}
\end{equation}
Define, for $i\geq 0$,
\begin{equation}
\begin{aligned}
	J_i&=G_{o^1,p^3+(i+1)\mathrm{e}_2}-G_{o^1,p^3+i\mathrm{e}_2},\\
	\widehat{J}_i&=G_{o^2,p^3+\mathrm{e}_1+i\mathrm{e}_2}-G_{o^2,p^3+\mathrm{e}_1+(i+1)\mathrm{e}_2},\\
	J^{\rho_1}_i&=G^{\rho_1}_{o^1,p^3+(i+1)\mathrm{e}_2}-G^{\rho_1}_{o^1,p^3+i\mathrm{e}_2},\\
	\widehat{J}^{\rho_2}_i&=G^{\rho_2}_{o^2,p^3+\mathrm{e}_1+i\mathrm{e}_2}-G^{\rho_2}_{o^2,p^3+\mathrm{e}_1+(i+1)\mathrm{e}_2}.
\end{aligned}
\end{equation}
Then, by the Lemma~\ref{Lem:Comparison}, it follows from \eqref{zub} that with probability $1-2e^{-c_0M^{3}}$
\begin{equation}
	J_i\leq J_i^{\rho_1} \textrm{ and }\widehat{J}_i^{\rho_2} \leq \widehat{J}_i
\end{equation}
for all $i\geq 0$, and therefore that
\begin{equation}
	J_i-\widehat{J}_i\leq J_i^{\rho_1}-\widehat{J}_i^{\rho_2}\label{rwc}
\end{equation}
for all $i\geq 0$. Set $\rho=\rho(\xi)$. Note that for $M\leq (\tau N)^{1/3}/\log(N)$ , using series expansion we have
\begin{equation}
\begin{aligned}\label{rhod}
	\rho_1&=\rho+\kappa(\rho)\frac M8(\tau N)^{-1/3}+o((\tau N)^{-1/3}),\\
	\rho_2&=\rho+\kappa(\rho)\frac M{16}(\tau N)^{-1/3}+o((\tau N)^{-1/3}),\\
	\rho_1-\rho_2&=\kappa(\rho)\frac M{16}(\tau N)^{-1/3}+o((\tau N)^{-1/3})>0,
\end{aligned}
\end{equation}
with $\kappa(\rho)=(1-\rho)(1-2\rho(1-\rho))/\rho>0$ for all $\rho\in (0,1)$.

Define
\begin{equation}
S_i=\sum_{k=0}^{i}J_k-\widehat{J}_k\quad \textrm{and}\quad W_i=\sum_{k=0}^{i}J^{\rho_1}_k-\widehat{J}^{\rho_2}_k
\end{equation}
so that by \eqref{rwc}
\begin{equation}
	S_i\leq W_i\textrm{ for }i\geq 0.
\end{equation}
Note that
\begin{equation}
	\{\Gamma^u_{\tau \xi_1 N}(\pi_{o^1,o^2})>\tau \xi_2 N +M (\tau N)^{2/3}\} \subseteq \bigg\{\sup_{i\geq \tfrac M2(\tau N)^{2/3}} S_i>0\bigg\} \subseteq \bigg\{\sup_{i\geq \tfrac{M}{2} (\tau N)^{2/3}} W_i>0\bigg\}.
\end{equation}
It follows that it is enough to show that there exists $c_1>0$ such that
\begin{equation}\label{ub5}
	\P\bigg(\sup_{i\geq \tfrac M2(\tau N)^{2/3}} W_i>0\bigg) \leq e^{-c_1M^{3}}.
\end{equation}

Note that
\begin{equation}\label{ub4}
\begin{aligned}
 	\P\bigg(\sup_{i\geq \tfrac M2 (\tau N)^{2/3}} W_i>0\bigg)& \leq \P\Big(W_{\tfrac M2 (\tau N)^{2/3}}>-\frac{ \chi(\rho)M^2}{64}(\tau N)^{1/3}\Big)\\ &+\P\bigg(\sup_{i\geq \tfrac M2 (\tau N)^{2/3}} W_i-W_{\tfrac M2 (\tau N)^{2/3}}>\frac{ \chi(\rho)M^2}{64}(\tau N)^{1/3}\bigg)
\end{aligned}
\end{equation}
for $\chi(\rho)=\kappa(\rho)/\rho^2$.

Plugging \eqref{rhod} in Lemma~\ref{lem:crw}
\begin{equation}
\begin{aligned}\label{ub3}
	\P\bigg(\sup_{i\geq \tfrac M2 (\tau N)^{2/3}} W_i-W_{\tfrac M2 (\tau N)^{2/3}}>\frac{ \chi(\rho)M^2}{64}(\tau N)^{1/3}\bigg)
&\leq \frac{\rho_1}{\rho_2}e^{-(\rho_1-\rho_2)\frac {\chi(\rho)M^2}{64 }(\tau N)^{1/3}}\\
&\leq 2e^{-\chi(\rho) \kappa(\rho) M^3/1024}
\end{aligned}
\end{equation}
for all $\tau N$ large enough.

Next, using exponential Tchebishev inequality, we show that
\begin{equation}\label{eq3.20}
 \P\Big(W_{\tfrac M2(\tau N)^{2/3}}> -\frac{ \chi(\rho)M^2}{64}(\tau N)^{1/3}\Big)\leq 2 e^{-M^3 \chi(\rho)\kappa(\rho)/8192}
\end{equation}
for all $\tau N$ large enough, which completes the proof. Indeed, using
\begin{equation}
\left(\frac{1}{\rho_1}-\frac{1}{\rho_2}\right)\tfrac M2(\tau N)^{2/3} = -\frac{M^2\chi(\rho)}{32}(\tau N)^{1/3}+o((\tau N)^{1/3}),
\end{equation}
we get, using also the independence of the $J$'s and $\widehat J$'s, that
\begin{equation}
\begin{aligned}
\eqref{eq3.20} &= \P\Bigg(\sum_{k=0}^{\tfrac M2(\tau N)^{2/3}}(J_k^{\rho_1}-\rho_1^{-1}-\widehat{J}_k^{\rho_2}+\rho_2^{-1})> \frac{ \chi(\rho)M^2}{64}(\tau N)^{1/3}+o((\tau N)^{1/3})\Bigg)\\
&\leq \inf_{\lambda>0} \frac{\E\Big(e^{\lambda (J_1^{\rho_1}-\rho_1^{-1}-\widehat{J}_1^{\rho_2}+\rho_2^{-1})}\Big)^{\tfrac M2(\tau N)^{2/3}}}{e^{\lambda [\frac{ \chi(\rho)M^2}{64}(\tau N)^{1/3}+o((\tau N)^{1/3})]}}\\
&=\inf_{\mu>0} e^{-M(M\kappa(\rho)-32 \mu)\mu/(64\rho^2)+o(1)} \leq 2 e^{-M^3\kappa(\rho)\chi(\rho)/8192},
\end{aligned}
\end{equation}
for all $\tau N$ large enough, where in the third step we set $\lambda=\mu (\tau N)^{-1/3}$ and performed simple computations.

\end{proof}

\begin{theorem}\label{prop1}
Let $o=(0,0)$ and $\e\in (0,1]$. There there exists $N_1(\e)$, $c(\e)$, $C(\e)$ such that for every direction $\xi$ with $\e\leq \xi_2/\xi_1\leq 1/\e$, and $v\leq N^{1/3}/\log(N)$, for $N>N_1$
	\begin{equation}
	\P\Big(\max_{k\in[0, \xi_1N]}\Gamma_k(\pi_{o,\xi N})< v N^{2/3}\Big)\geq 1 - Ce^{-c v^3}.
	\end{equation}
\end{theorem}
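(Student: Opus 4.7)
The plan is to promote the pointwise midpoint estimate of Theorem~\ref{thm:loc} into a uniform-in-$k$ estimate via a multi-scale chaining argument in the spirit of~\cite{BSS14}.

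First, by the reversal symmetry of the point-to-point geodesic (the geodesic from $o$ to $\xi N$ run backwards is the geodesic from $\xi N$ to $o$) together with the involution $k \mapsto \xi_1 N - k$, it suffices to bound $\max_{k \in [0,\xi_1 N/2]} \Gamma_k(\pi_{o,\xi N})$; the bound on the complementary half follows from the same argument applied to the reversed geodesic, as indicated in Remark~\ref{remptptlocal}.

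Second, I decompose $[0,\xi_1 N/2]$ into dyadic blocks $I_j = [2^{-j-1}\xi_1 N, 2^{-j}\xi_1 N]$ for $j=0,1,\ldots,J$ with $J \lesssim \log_2 N$. On $I_j$, the natural transversal scale appearing in Theorem~\ref{thm:loc} (applied with $\tau = 2^{-j}$) is $\sigma_j = (2^{-j}N)^{2/3}$, so requiring $\Gamma_k > vN^{2/3}$ amounts to asking for a deviation of $M_j = v \cdot 2^{2j/3}$ natural scales, with per-point tail
\begin{equation*}
\P\bigl(\Gamma_{k}(\pi_{o,\xi N}) > vN^{2/3}\bigr) \leq \exp(-c v^3 2^{2j}), \qquad k \in I_j.
\end{equation*}

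Third, on each $I_j$ I place a deterministic grid of anchor points. The key geometric observation is that, because the geodesic is up-right and the characteristic is a straight line, if the path visits a vertex $(k,\ell)$ with vertical excess $\Delta := \ell - (\xi_2/\xi_1) k > vN^{2/3}$, then at the nearest anchor $k'>k$ the excess is at least $\Delta - (\xi_2/\xi_1)(k'-k)$. Choosing the anchor spacing at each scale small enough that this deterministic loss is at most $\Delta/2$ translates a sup-deviation $> vN^{2/3}$ into a pointwise deviation $\geq vN^{2/3}/2$ at some anchor. A union bound over anchors within $I_j$ and over dyadic scales $j$, combined with the cubic-exponential per-point bound, then produces the estimate.

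The main obstacle is to balance the anchor density against the exponential bound at each scale so that the polynomial-in-$N$ prefactor accumulated in the union bound is absorbed by the cubic exponential $\exp(-cv^3 2^{2j})$. The use of cubic (rather than Gaussian) decay in Theorem~\ref{thm:loc} is essential here, and the restriction $v \leq N^{1/3}/\log(N)$ matches precisely the range where this absorption can be carried out. For $v$ below an absolute constant threshold, the claimed bound $1 - Ce^{-cv^3}$ is made vacuous by choosing the prefactor $C$ large, so the non-trivial content lies at $v$ above this threshold, in which regime the cubic exponential dominates all polynomial factors in $N$ that the chaining produces.
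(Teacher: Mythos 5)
Your first two steps are consistent with the paper (the symmetry reduction, and the identification of the per\nobreakdash-point tail $e^{-cv^32^{2j}}$ on the block $I_j$ via Theorem~\ref{thm:loc}; the up\nobreakdash-right monotonicity argument for transferring a sup\nobreakdash-deviation to a nearby anchor is also correct). The gap is in the third step, and your proposed resolution of the ``main obstacle'' is wrong. The Lipschitz transfer forces the anchor spacing to be $O(vN^{2/3})$ at every scale, so a coarse block with $j=O(1)$ (which covers a constant fraction of the path) carries of order $N^{1/3}/v$ anchors, each with per-anchor probability only $e^{-cv^3}$. The resulting contribution $N^{1/3}v^{-1}e^{-cv^3}$ is not $O(e^{-c'v^3})$ uniformly in $N$ unless $v\gtrsim(\log N)^{1/3}$: for $v$ equal to any fixed constant above your threshold, $e^{-cv^3}$ is a fixed constant while $N^{1/3}\to\infty$, so a cubic exponential in $v$ alone cannot absorb a polynomial factor in $N$. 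The regime $v_0\le v\le(\log N)^{1/3}$ --- precisely where the statement has nontrivial content --- is therefore not covered, and no choice of anchor density repairs this, since any single bulk anchor genuinely has deviation probability of order $e^{-cv^3}$ and you need polynomially many of them. (A secondary issue: at the finest blocks, applying Theorem~\ref{thm:loc} with $M_j=v2^{2j/3}$ violates its hypothesis $M\le(\tau N)^{1/3}/\log N$ once $2^{-j}\lesssim v\log N\,N^{-1/3}$; those scales must instead be handled by the deterministic step-counting bound.)

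The fix, which is what the paper does following~\cite{BSS14}, is a genuinely hierarchical chaining rather than a flat union bound. One controls the geodesic at the dyadic points $k2^{-j}N$ with slowly increasing thresholds $u_jN^{2/3}$, where $u_1=v/10$ and $u_j-u_{j-1}=u_12^{-(j-1)/2}$, and bounds $\P(B_{j,k}\cap A_{j-1})$ by using the ordering of geodesics to dominate the path on the window $[(k-1)2^{-j}N,(k+1)2^{-j}N]$ by an auxiliary point-to-point geodesic between shifted endpoints, to which Theorem~\ref{thm:loc} is applied with $\tau=1/2$. Conditioning on control at scale $j-1$ means the required excess at scale $j$ is only the increment $u_j-u_{j-1}$, which measured in the window's own KPZ units is $M\sim u_12^{j/6}$; this gives a per-point bound $e^{-cu_1^32^{j/2}}$ that decays in $j$ fast enough to beat the $2^j$ points at that scale, and $\sum_j2^je^{-cu_1^32^{j/2}}\le Ce^{-cv^3}$ with no polynomial-in-$N$ prefactor. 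Without this conditioning on the coarser scale, your argument cannot reach the stated uniform bound.
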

\begin{proof}
	The proof follows the approach of \cite{BSS14}, using the pointwise control of the fluctuations of the geodesic around the characteristic from Theorem~\ref{thm:loc}. Let $m=\min\{j: 2^{-j}N \leq N^{1/2}\}$. Choose $u_1<u_2<\ldots$ with $u_1=v/10$ and $u_j-u_{j-1}=u_1 2^{-(j-1)/2}$. We define
\begin{equation}
	u(k)=\Gamma^u_k(\pi_{o,\xi N})-\tfrac{\xi_2}{\xi_1} k, \quad k\in [0,\xi_1 N]
\end{equation}
and the following events
	\begin{equation}
	\begin{aligned}
	A_j&=\{u(k 2^{-j} N) \leq u_j N^{2/3}, 1\leq k \leq 2^j-1\},\\
	B_{j,k}&=\{u(k 2^{-j} N) > u_j N^{2/3}\}, \quad k=1,\ldots,2^j-1,\\
	L&=\{\sup_{x\in[0,1]} |u((k+x) 2^{-m} N)-u(k 2^{-m} N)|\leq \tfrac12 v N^{2/3}, 0\leq k \leq 2^m-1\},\\
	G&=\{u(k)\leq v N^{2/3}\textrm{ for all }0\leq k\leq \xi_1N\}.
	\end{aligned}
	\end{equation}
	Notice that $A_j^c=\bigcup_{k=1}^{2^j-1} B_{j,k}$. Also, since $\lim_{j\to\infty} u_j\leq v/2$, we have
	\begin{equation}
	\bigcup_{j=1}^m\bigcup_{k=1}^{2^j-1} (B_{j,k}\cap A_{j-1})\supseteq \{u(k 2^{-m} N)\geq \tfrac12v N^{2/3}\textrm{ for some }k=1,\ldots,2^m-1\}.
	\end{equation}
	This implies that
	\begin{equation}
	G\supseteq \Bigg(\bigcup_{j=1}^m\bigcup_{k=1}^{2^j-1} (B_{j,k}\cap A_{j-1})\Bigg)^c\cap L.
	\end{equation}
	Thus we have
	\begin{equation}\label{eq2}
	\P(G)\leq \P(L^c)+\sum_{j=1}^m\sum_{k=1}^{2^j-1}\P(B_{j,k}\cap A_{j-1}).
	\end{equation}

	Since the geodesics have discrete steps, in $n$ time steps a geodesic can wonder off by at most $n$ steps from its characteristic. For all $N$ large enough, $N^{1/2}<\tfrac12 v N^{2/3}$ and therefore $\P(L)=1$. Thus we need to bound $\P(B_{j,k}\cap A_{j-1})$ only. As for even $k$ the two events are incompatible, we consider odd $k$.
	
	If $A_{j-1}$ holds, then the geodesic at $t_1=(k-1)2^{-j}N$ and $t_2=(k+1) 2^{-j}N$ satisfies
	\begin{equation}
	 	u(t_1)\leq u_{j-1} N^{2/3}\quad \text{ and } \quad u(t_2)\leq u_{j-1} N^{2/3}.
	\end{equation}
	 Consider the point-to-point LPP from $\hat o^1$ to $\hat o^2$ with
	\begin{equation}
	 \hat o^1=(t_1,t_1\tfrac{\xi_2}{\xi_1}+u_{j-1}) \quad \text{and} \quad \hat o^2=(t_2, t_2 \tfrac{\xi_2}{\xi_1}+u_{j-1}).
\end{equation}
Let $\hat u(i)=\Gamma^u_i(\pi_{\hat o^1,\hat o^2})$ for $i\in[t_1,t_2]$. Then, by the order of geodesics
	\begin{equation}
	 u(i)\leq \hat u(i)\textrm{ for } i\in[t_1,t_2],
\end{equation}
	 so that
	\begin{equation}
	 	\{u(i)>u_jN^{2/3}\} \subseteq \{\hat u(i)>u_jN^{2/3}\}\textrm{ for } i\in[t_1,t_2].
\end{equation}
This gives
\begin{equation}
\P(B_{j,k}\cap A_{j-1}) \leq \P\big(\hat u(k 2^{-j}N)>u_jN^{2/3}).
\end{equation}
Since the law of $\hat u$ is the one of a point-to-point LPP over a time distance $t_2-t_1=2^{-j+1}N$, we can apply Theorem~\ref{thm:loc} with $\tau=1/2$, $N=t_2-t_1$ $M$ satisfying $(u_{j}-u_{j-1}) N^{2/3}=M (\tfrac12(t_j-t_{j-1}))^{2/3}$. This gives
\begin{equation}
\P\big(\hat u(k 2^{-j}N)>u_jN^{2/3}) \leq e^{-c_1 (u_1 2^{-(j-1)/2} 2^{2j/3})^3}\leq e^{-c_1 u_1^3 2^{j/2}}.
\end{equation}
This bound applied to \eqref{eq2} leads to $\P(G)\leq C e^{-c v^3}$ for some constants $C,c>0$.
\end{proof}

Now we have all the ingredients to prove Theorem~\ref{thm:locCylinder}.
\begin{proof}[Proof of Theorem~\ref{thm:locCylinder}]
Theorem~\ref{thm:loc} implies that with probability at least $1-e^{-c_1 M^3/8}$, the geodesic from $o$ to $\xi N$ does not deviate more than $\tfrac12 M (\tau N)^{2/3}$ away from the point $\tau \xi N$. Given this event, by order of geodesics, the geodesic from $o$ to $\xi N$ is sandwiched between the geodesics from $\tfrac12 M (\tau N)^{2/3} \mathrm{e}_1$ to $\tau \xi N+\tfrac12 M (\tau N)^{2/3} \mathrm{e}_1$ and the one from $-\tfrac12 M(\tau N)^{2/3}\mathrm{e}_1$ to $\tau \xi N-\tfrac12 M (\tau N)^{2/3} \mathrm{e}_1$. By Theorem~\ref{prop1}, the latter two geodesics fluctuates no more than $\tfrac12 M (\tau N)^{2/3}$, with probability at least $1-C e^{-cM^3/\tau^2}$, which implies the claim.
\end{proof}

\subsection{Localization of the exit point}
In this subsection, we estimate the location of the exit point for densities slightly larger or smaller than $1/2$. This will allow us to sandwich the point-to-point geodesics by those of the stationary. Notice that to apply Lemma~\ref{Lem:Comparison}, it would be enough to set in the event $\cA_1$ (resp.\ $\cA_2$) below that the exit point is positive (resp.\ negative) and bounded by $15 r N^{2/3}$ (resp.\ $-15 r N^{2/3}$) as the exit point for the LPP $G_{o,x}$ is $0$. However, with this slight modification (that the exit point is $rN^{2/3}$ from the origin), the proof is then applicable for more general initial conditions provided the exit points of the LPP with initial conditions $h_0$ on $\cL$ is localized in a $[-r N^{2/3},r N^{2/3}]$ with high probability.

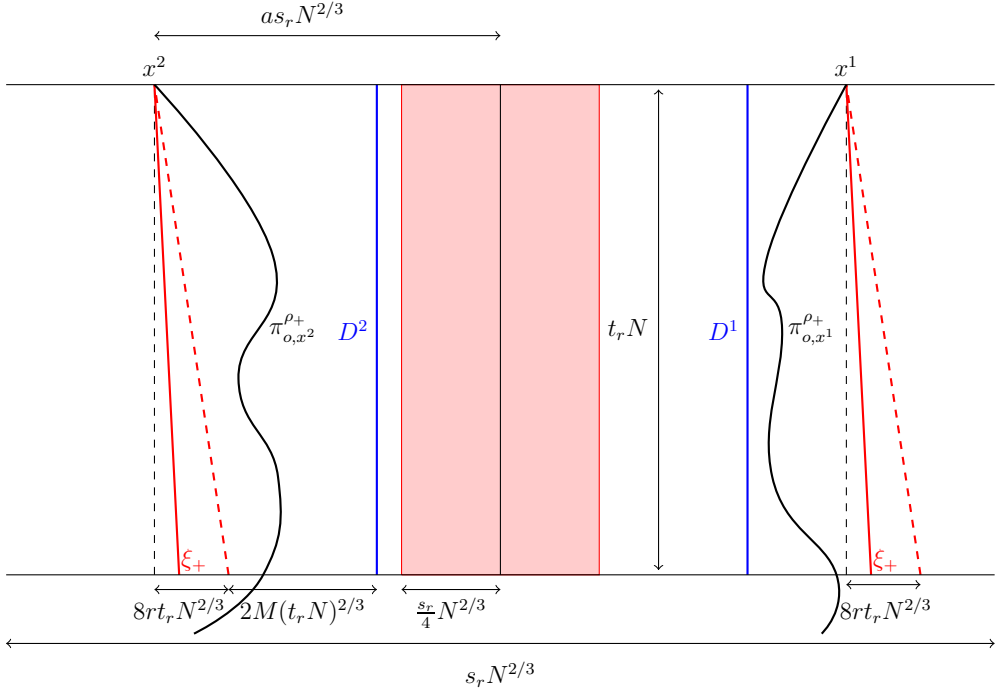
\begin{figure}
\begin{center}
	\begin{tikzpicture}[scale=0.65, every node/.style={transform shape}]
	node/.style={transform shape}]
	\def\s{1.2}
	\draw [color=red,fill=red!20] (8,0) -- (12,0) -- (12,10)--(8,10) --(8,0);
	\node [scale=\s,above] at (3,10) {$x^2$};
	\node [scale=\s,above] at (17,10) {$x^1$};
	\draw [dashed] (3,0)--(3,10);
	\draw [dashed,red,thick] (4.5,0)--(3,10);
	\draw [red,thick] (3.5,0)--(3,10);
	\node [scale=\s,red] at (3.8,0.3) {$\xi_+$};
	\draw [dashed] (17,0)--(17,10);
	\draw [dashed,red,thick] (18.5,0)--(17,10);
	\draw [red,thick] (17.5,0)--(17,10);
	\node [scale=\s,red] at (17.8,0.3) {$\xi_+$};
	\draw [blue,thick] (7.5,10)--(7.5,0);
	\node [scale=\s,left,blue] at (7.5,5) {$D^2$};
	\draw [blue,thick] (15,10)--(15,0);
	\node [scale=\s,left,blue] at (15,5) {$D^1$};
	\draw [<->] (3,11)--(10,11);
	\node [scale=\s,above] at (6,11) {$as_r N^{2/3}$};
	\draw [<->,right] (13.2,9.9)--(13.2,0.1);
	\node [scale=\s,right] at (12,5) {$t_r N$};
	\draw [<->] (8,-0.3)--(10,-0.3);
	\node [scale=\s,below] at (9,-0.3) {$\frac{s_r}4 N^{2/3}$};
	\draw [<->] (3,-0.3)--(4.5,-0.3);
	\node [scale=\s,below] at (3.5,-0.3) {$8rt_r N^{2/3}$};
	\draw [<->] (17,-0.2)--(18.5,-0.2);
	\node [scale=\s,below] at (17.8,-0.3) {$8rt_r N^{2/3}$};
	\draw [<->] (4.5,-0.3)--(7.5,-0.3);
	\node [scale=\s,below] at (6,-0.3) {$2M(t_r N)^{2/3}$};
	\draw [<->] (0,-1.4)--(20,-1.4);
	\node [scale=\s,below] at (10,-1.6) {$s_r N^{2/3}$};
	\draw[black,thick] plot [smooth,tension=0.8] coordinates{(3.8,-1.2)(5.2,0) (5.5,2) (4.7,4) (5.4,6.5) (3,10)};
	\draw[black,thick] plot [smooth,tension=0.8] coordinates{(16.5,-1.2)(16.8,0) (15.5,2) (15.7,5) (15.4,6.5) (17,10)};
	\node [scale=\s,black] at (5.8,5) {$\pi^{\rho_+}_{o,x^2}$};
	\node [scale=\s,black] at (16.3,5) {$\pi^{\rho_+}_{o,x^1}$};
	\draw (0,10)--(20,10);
	\draw (0,0)--(20,0);
 \draw (10,0)--(10,10);
	\end{tikzpicture}
	\caption{Illustration of the geometry around the end-point $(N,N)$ magnified and rotated by $\pi/4$. Choosing $t_r,s_r$ properly forces the geodesic $\pi^{\rho_+}_{o,x^2}$ to traverse to the left of $D^2$ and $\pi^{\rho_+}_{o,x^1}$ to the right of $D^1$.}\label{fig:dim}
\end{center}
\end{figure}

Fix $r>0$ and let $0<s_r,t_r$. $s_r$ and $t_r$ will be determined later and represent the dimensions of space and time respectively. Let $0<a<1$. Define the points
\begin{equation}
\begin{aligned}\label{ep}
	x^1&=N\mathrm{e}_4 + a s_r N^{2/3} \mathrm{e}_3,\\
	x^2&=N\mathrm{e}_4 - a s_r N^{2/3} \mathrm{e}_3.
\end{aligned}
\end{equation}
Define the densities
\begin{equation}
\rho_+ = \tfrac12 + r N^{-1/3},\quad \rho_- = \tfrac12 - r N^{-1/3},
\end{equation}
and, with $o=(0,0)$, the events
\begin{equation}\label{A}
\begin{aligned}
	\cA_1&=\big\{Z^{\rho_+}_{o,x^2}\geq rN^{2/3} ,Z^{\rho_+}_{o,x^1}\leq 15rN^{2/3}\big\},\\
	\cA_2&=\big\{Z^{\rho_-}_{o,x^2}\geq -15rN^{2/3} ,Z^{\rho_-}_{o,x^1}\leq -rN^{2/3}\big\},\\
	\cA&=\cA_1\cup\cA_2.
\end{aligned}
\end{equation}

The event $\cA$ is highly probable for large $r$ as shows the following lemma.
\begin{lemma}\label{lem:sb}
Assume $0\leq s_r\leq 2r$ and $0<a<1$. There exists $c,N_0>0$ such that for $N>N_0$ and $0<r<N^{1/3}/\log(N),$	
	\begin{equation}
		\P(\cA)\geq 1-e^{-cr^3}.
	\end{equation}
\end{lemma}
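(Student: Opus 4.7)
Since $\cA\supseteq\cA_1$, it suffices to show $\P(\cA_1^c)\leq e^{-cr^3}$. A union bound reduces the task to bounding $\P(Z^{\rho_+}_{o,x^2}<rN^{2/3})$ and $\P(Z^{\rho_+}_{o,x^1}>15rN^{2/3})$, each by $e^{-cr^3}$ (with a suitable constant). The guiding picture is geometric: for $\rho_+=\tfrac12+rN^{-1/3}$ one has $\xi(\rho_+)=(\tfrac12-2rN^{-1/3},\tfrac12+2rN^{-1/3})+O(r^2N^{-2/3})$, so the characteristic ray from $o$ of density $\rho_+$ meets the time horizon $L_1$ at the point $N\mathrm{e}_4-4rN^{2/3}\mathrm{e}_3$. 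Under the hypotheses $s_r\leq 2r$ and $a<1$, both $x^1$ and $x^2$ lie to the right of this characteristic, at signed $\mathrm{e}_3$-offsets $(4r+as_r)N^{2/3}$ and $(4r-as_r)N^{2/3}\geq 2rN^{2/3}$. Extending the characteristic line backwards from $x^i$ to the $\mathrm{e}_1$-axis identifies the typical exit location as $\mu_1\approx(2as_r+8r)N^{2/3}\in[8r,12r]N^{2/3}$ and $\mu_2\approx(8r-2as_r)N^{2/3}\in[4r,8r]N^{2/3}$, so both bad events correspond to deviations of at least $3rN^{2/3}$ from the typical value of the exit point.

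To make this rigorous I plan to apply Lemma~\ref{spc} with $\xi=\xi(\rho_+)$ and $\nu=\rho_+$, so that the compatibility condition $|\nu-\rho(\xi)|\leq N^{-1/3}$ holds with equality. Writing $x^i = 2N\xi(\rho_+)+r_i(2N)^{2/3}\mathrm{e}_3$ yields $r_i=(4r\pm as_r)/2^{2/3}\geq 2^{1/3}r$, and estimate \eqref{spc3} immediately gives $\P(Z^{\rho_+}_{o,x^i}<0)\leq e^{-c_0 r_i^3}\leq e^{-cr^3}$. To upgrade these exit-sign bounds into the quantitative $\{Z<rN^{2/3}\}$ and $\{Z>15rN^{2/3}\}$ bounds that we actually need, I translate the origin to $o'=(kN^{2/3},0)$ for $k\in\{r,15r\}$: translation invariance of the stationary-LPP distribution gives $Z^{\rho_+}_{o',p}\stackrel{d}{=}Z^{\rho_+}_{o,p-o'}$, and a Burke-type coupling of the two stationary LPPs (which resamples the boundary weights on the new vertical axis emanating from $o'$) reduces $\{Z^{\rho_+}_{o,x^i}\gtrless kN^{2/3}\}$ to the exit-sign event for the shifted endpoint $x^i-o'$. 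This shifted endpoint still sits at $\mathrm{e}_3$-offset of order $rN^{2/3}$ from the characteristic of $\rho_+$ at the modified parameter $2N-kN^{2/3}$, so Lemma~\ref{spc} once more supplies a cubic tail of the form $e^{-c'r^3}$.

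The main obstacle is precisely this Burke-type coupling, since the stationary LPPs from $o$ and from $o'$ use independent boundary weights on the vertical axis through $o'$ (bulk weights in the original model, fresh stationary boundaries in the shifted one). Resolving it exploits the Burke stationarity of the horizontal-line increments in the original model, which lets one identify the joint law of the ``as seen from $o'$'' boundary weights with the fresh stationary boundaries in the shifted LPP, thereby promoting the translation-invariance identity into a comparison strong enough to transfer the Lemma~\ref{spc} estimate from the shifted to the original exit event. Once this coupling is in place, combining the two cubic bounds yields $\P(\cA_1^c)\leq e^{-cr^3}$, which gives the desired lower bound $\P(\cA)\geq 1-e^{-cr^3}$ since $\cA\supseteq\cA_1$; an analogous argument (or the symmetry $\mathrm{e}_1\leftrightarrow\mathrm{e}_2,\ \rho\leftrightarrow 1-\rho$) controls $\cA_2$ and would give an alternative route to the same conclusion.
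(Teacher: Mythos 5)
Your proposal is correct and follows essentially the same route as the paper: reduce to $\cA_1$ (the paper phrases this via a union bound, you via $\cA\supseteq\cA_1$), convert the quantitative events $\{Z^{\rho_+}_{o,x^2}<rN^{2/3}\}$ and $\{Z^{\rho_+}_{o,x^1}>15rN^{2/3}\}$ into exit-sign events for the shifted endpoints $x^2-rN^{2/3}\mathrm{e}_1$ and $x^1-15rN^{2/3}\mathrm{e}_1$ using Burke stationarity (the paper's one-line ``by stationarity of the model''), and then apply \eqref{spc2}--\eqref{spc3} at the modified parameter $\tilde N$, checking via $s_r\leq 2r$ and $a<1$ that the shifted endpoints sit at distance of order $rN^{2/3}$ from the $\rho_+$-characteristic. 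The Burke-type restart you flag as the main obstacle is exactly the mechanism the paper invokes implicitly, and your resolution of it is the standard and correct one.
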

\begin{proof}
	We will show the claim for $\cA_1$, one can similarly prove the claim for $\cA_2$. The result would then follow from union bound. To prove the claim for $\cA_1$, by union bound it is enough to show that
	\begin{align}
		&\P\big(Z^{\rho_+}_{o,x^2}\geq rN^{2/3}\big)\geq 1-e^{-cr^3}\label{lb1},\\
		&\P\big(Z^{\rho_+}_{o,x^1}\leq 15rN^{2/3}\big)\geq 1-e^{-cr^3}\label{lb2}.
	\end{align}
Let $x^3=x^2-r N^{2/3} \mathrm{e}_1$. Then by stationarity of the model,
\begin{equation}\label{eq3.44}
 \P\big(Z^{\rho_+}_{o,x^2}< rN^{2/3}\big) = \P\big(Z^{\rho_+}_{o,x^3}< 0\big).
\end{equation}
Now we want to use \eqref{spc3}. For that denote $\tilde N=N-\frac{r}{2} N^{2/3}$ and write
\begin{equation}
 x^3=\xi(\rho_+) 2\tilde N+\tilde r \tilde N^{2/3} \mathrm{e_3}.
\end{equation}
Solving with respect to $\tilde r$ we obtain
\begin{equation}
 \tilde r=\frac{7}{2}r-a s_r +\cO(r^2/N^{1/3}).
\end{equation}
Applying \eqref{spc3} with $\nu\to \rho_+$, $N\to \tilde N$ and $r\to \tilde r$ gives
\begin{equation}
 \P\big(Z^{\rho_+}_{o,x^3}< 0\big) \leq e^{-c_0\tilde r^3}.
\end{equation}
Since $s_r\leq 2r$ and $r\leq N^{1/3}/\log(N)$, we have $\tilde r\geq r$ for all $N$ large enough, which proves \eqref{lb1}.

Let $x^4=x^1-15 r N^{2/3} \mathrm{e}_1$. Then by stationarity of the model,
\begin{equation}\label{eq3.48}
 \P\big(Z^{\rho_+}_{o,x^1}>15 rN^{2/3}\big) = \P\big(Z^{\rho_+}_{o,x^4}> 0\big).
\end{equation}
We will apply this time \eqref{spc2}. Denote $\tilde N = N - \frac{15}{2} r N^{2/3}$ and write
\begin{equation}
 x^4=\xi(\rho_+) 2\tilde N-\hat r \tilde N^{2/3} \mathrm{e_3}.
\end{equation}
Solving with respect to $\hat r$ we obtain
\begin{equation}
 \hat r=\frac{7}{2}r-a s_r +\cO(r^2/N^{1/3})\geq r
\end{equation}
for all $N$ large enough. Applying \eqref{spc2} with $\nu\to \rho_+$, $N\to \tilde N$ and $r\to \hat r$ proves \eqref{lb2}.
\end{proof}

\subsection{Uniform sandwiching of geodesics terminating in $\cC^{s_r/2,t_r}$.}
Consider the following assumption.
\begin{assumption}\label{ass}
Let $M_0>0$, $a=3/8$, $s_r\leq \min\{r,4\}$ and make the following assumptions on the parameters:
\begin{equation}\label{eq3.67}
r \leq N^{1/3}/\log(N),\quad M_0\leq M\leq \tfrac1{16} s_r t_r^{-2/3}-4 r t_r^{1/3}.
\end{equation}
\end{assumption}
We shall later discuss this assumption in Remark~\ref{rem:assumptions} below.
Under Assumption~\ref{ass}, the geodesics $\pi^{1/2}_{o,x}$ and $\pi_{y,x}$, for $y\in\cR^{r/2,1/4}$, are controlled by the ones with densities $\rho_+$ and $\rho_-$ for all $x\in \cC^{s_r/2,t_r}$. This is the content of the following result whose proof we defer to the end of this section.
\begin{lemma}\label{cor:og}
Under Assumption~\ref{ass}, there exists $C,c>0$ such that
	\begin{equation}\label{eq3.78}
		\P\Big(\pi^{\rho_-}_{o,x}\preceq \pi^{1/2}_{o,x},\pi_{y,x} \preceq \pi^{\rho_+}_{o,x} \quad \forall x\in \cC^{s_r/2,t_r},y\in \cR^{r/2,1/4}\Big)\geq 1-Ce^{-cM^3}-2e^{-cr^3}
	\end{equation}
for all $N$ large enough.
\end{lemma}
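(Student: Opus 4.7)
My approach splits the event in Lemma~\ref{cor:og} into three pieces: an exit-point estimate, a transversal-fluctuation estimate, and a non-crossing argument. First, by Lemma~\ref{lem:sb} applied separately to $\cA_1$ and $\cA_2$ (its proof in fact gives each with probability at least $1 - 2 e^{-c r^3}$), I obtain $\P(\cA_1 \cap \cA_2) \geq 1 - 4 e^{-c r^3}$; on this event the four stationary geodesics $\pi^{\rho_+}_{o, x^j}$ and $\pi^{\rho_-}_{o, x^j}$, $j = 1, 2$, have exit points in the explicit windows dictated by \eqref{A}. The standard monotonicity of stationary exit points in the endpoint under the $\preceq$-order (a consequence of the non-crossing of geodesics) extends this control uniformly: since $a = 3/8$ makes $x^2 \preceq x \preceq x^1$ for every $x \in \cC^{s_r/2, t_r}$ (as $(3/8) s_r > s_r/4$), one deduces $Z^{\rho_+}_{o, x} \in [r N^{2/3}, 15 r N^{2/3}]$ and $Z^{\rho_-}_{o, x} \in [-15 r N^{2/3}, -r N^{2/3}]$ for all such $x$.

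Next I control the transversal fluctuations. On $\cA_1 \cap \cA_2$, each of the four stationary geodesics, viewed from its exit point onwards, is a point-to-point geodesic in the bulk noise with direction $\xi$ satisfying $\xi_2/\xi_1$ in a fixed compact subset of $(0, \infty)$. Applying Theorem~\ref{prop1} to each of them, with the parameter $v$ chosen so that the resulting exponent matches $M^3$, yields an event $E$ of probability at least $1 - C e^{-c M^3}$ on which the four geodesics deviate transversally from their characteristic lines by at most $M (t_r N)^{2/3}$ inside the cylinder region. The key geometric check is that, say, the characteristic of $\pi^{\rho_+}_{o, x^1}$ enters the back face time $(1 - t_r) N$ of the cylinder at antidiagonal coordinate $\approx (3/8) s_r N^{2/3}$, with a correction of order at most $4 r t_r N^{2/3}$ from the uncertainty $Z \in [0, 15 r N^{2/3}]$ of its exit point; the upper bound $M \leq \tfrac{1}{16} s_r t_r^{-2/3} - 4 r t_r^{1/3}$ in Assumption~\ref{ass} is tuned precisely so that this characteristic, plus the fluctuation margin $M(t_r N)^{2/3}$, still lies outside the right edge $(s_r/4) N^{2/3}$ of the cylinder. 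Mirror statements hold for $\pi^{\rho_+}_{o, x^2}$ (trapping the cylinder from the other side) and for the pair $\pi^{\rho_-}_{o, x^{1, 2}}$.

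Given this envelope, the sandwich $\pi_{y, x} \preceq \pi^{\rho_+}_{o, x}$ for $y \in \cR^{r/2, 1/4}$ and $x \in \cC^{s_r/2, t_r}$ follows from the non-crossing of LPP geodesics with common endpoint $x$: it suffices to exhibit a time $T$ in the common domain of both paths at which the $\mathrm{e}_3$-coordinate of $\pi_{y, x}(T)$ is at most that of $\pi^{\rho_+}_{o, x}(T)$. If the time of $y$ is at most $Z^{\rho_+}_{o, x}$, one can take $T$ to be the time of $y$: on the $\mathrm{e}_1$-axis portion of $\pi^{\rho_+}_{o, x}$, its $\mathrm{e}_3$-coordinate equals its time, and since $y_2 \geq 0$ one automatically has $y_1 - y_2 \leq y_1 + y_2$. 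Otherwise, the envelope computation above gives a margin of order $r N^{2/3}$ between the two coordinates at the time of $y$, which strictly dominates the fluctuation budget. The three remaining inequalities in \eqref{eq3.78} are handled analogously; for the comparison with $\pi^{1/2}_{o, x}$ one additionally uses Lemma~\ref{Lem:Comparison} to place its exit point within the window delimited by those of $\pi^{\rho_+}$ and $\pi^{\rho_-}$ on the good event. The main technical difficulty is the envelope step: the constants $3/8$, $1/4$, $1/16$, and $4$ must be tracked precisely so that the fluctuation budget allowed by Assumption~\ref{ass} never bridges the narrow gap between the outer characteristic tubes and the cylinder's edges, uniformly over all $y$ and $x$ in the specified regions.
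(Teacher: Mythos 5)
Your overall strategy (exit-point windows from Lemma~\ref{lem:sb}, transversal fluctuation control, then geodesic ordering) is the same as the paper's, but two steps do not go through as written. First, the claim that $x^2\preceq x\preceq x^1$ for every $x\in\cC^{s_r/2,t_r}$ is false: writing $x=i\mathrm{e}_4+j\mathrm{e}_3$, the condition $x_2\geq x^1_2$ forces $N-i\leq \tfrac38 s_r N^{2/3}-j$, which fails for most of the cylinder since its temporal depth is $t_rN\gg s_rN^{2/3}$. Consequently you cannot deduce the uniform exit-point windows for all $x$ in the cylinder from pointwise monotonicity of $Z^{\rho_\pm}_{o,\cdot}$; the paper obtains them (event $\cO$ in Corollary~\ref{cor:up}) only \emph{after} establishing the path ordering $\pi^{\rho_+}_{o,x^2}\preceq\pi^{\rho_+}_{o,x}\preceq\pi^{\rho_+}_{o,x^1}$, which rests on Lemma~\ref{lem:cyl2} (the lines $D^1,D^2$ straddling the cylinder). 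Your steps 1 and 2 must therefore be run in the opposite order.

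Second, and more seriously, the case in which the time of $y$ exceeds $Z^{\rho_+}_{o,x}$ is not covered by anything you have proved. Your envelope in step 2 controls the stationary geodesics only near the endpoint, over the last $t_rN$ steps and with budget $M(t_rN)^{2/3}$; it says nothing about their position at times of order $N/4$, which is where the points of $\cR^{r/2,1/4}$ live. To separate $\pi^{\rho_\pm}_{o,x}$ from the whole region $\cR^{r/2,1/4}$ one needs a fluctuation bound of order $rN^{2/3}$ over the first $\cO(N)$ steps, together with the elementary but essential computation that the straight line from a point at distance $rN^{2/3}$ from the origin on the axis to $x^1$ clears the boundary line of $\cR^{r/2,1/4}$ by at least $\tfrac1{16}rN^{2/3}$ uniformly over $u\in[0,N/4]$ (this uses $s_r\leq r$ and is exactly \eqref{lb}). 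The paper implements this by introducing the auxiliary point-to-point geodesic $\pi_{(0,rN^{2/3}),x^1}$, applying Theorem~\ref{prop1} to it with $v\sim r$ (whence the error $e^{-cr^3}$), and chaining $\pi^{\rho_-}_{o,x}\preceq\pi^{\rho_-}_{o,x^1}\preceq\pi_{(0,rN^{2/3}),x^1}\preceq\cR^{r/2,1/4}$. This auxiliary construction is the actual content of the lemma and is missing from your argument. Two smaller points: for two geodesics with common endpoint it does not suffice to exhibit \emph{some} common time $T$ at which they are ordered (they may have crossed earlier and merged); you need the ordering at the first common time, i.e.\ at the time of $y$. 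And the exit point of $\pi^{1/2}_{o,x}$ is located via \eqref{spc1} of Lemma~\ref{spc}, not via Lemma~\ref{Lem:Comparison}, which compares increments of passage times rather than exit points.
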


Define
\begin{equation}
\begin{aligned}\label{c}
c^1&=\pi^{\rho_+}_{o,x^1}\cap L_{1-t_r},\\
c^2&=\pi^{\rho_+}_{o,x^2}\cap L_{1-t_r}.
\end{aligned}
\end{equation}
To ease the notation we also denote
\begin{equation}
\begin{aligned}
	w^2&=(1-t_r)N\mathrm{e}_4- as_rN^{2/3}\mathrm{e}_3=x^2-t_rN\mathrm{e}_4, \\
	w^1&=(1-t_r)N\mathrm{e}_4+ as_rN^{2/3}\mathrm{e}_3=x^1-t_rN\mathrm{e}_4.
\end{aligned}
\end{equation}
\begin{lemma}\label{lem:cyl}
	There exists $c,N_0,M_0>0$ such that for $t_r N>N_0$, $r\leq N^{1/3}/\log(N)$, $M\geq M_0$
	\begin{align}
		&\P\Big(w^2-M(t_rN)^{2/3}\mathrm{e}_3 \preceq c^2\preceq w^2+(8rt_rN^{2/3}+M(t_rN)^{2/3})\mathrm{e}_3\Big) \geq 1-e^{-cM^3},\label{lb3}\\
		&\P\Big(w^1-M(t_rN)^{2/3}\mathrm{e}_3 \preceq c^1 \preceq w^1+(8rt_rN^{2/3}+M(t_rN)^{2/3})\mathrm{e}_3 \Big) \geq 1-e^{-cM^3}.\label{lb4}
	\end{align}
\end{lemma}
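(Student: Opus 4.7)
My plan is to sandwich the stationary geodesic $\pi^{\rho_+}_{o,x^i}$ between two point-to-point geodesics anchored at the $\mathrm{e}_1$-axis near the typical $\rho_+$-exit point, and then apply Theorem~\ref{thm:locCylinder} (in its time-reversed form from Remark~\ref{remptptlocal}) to localize each bounding geodesic. I detail the proof of \eqref{lb3}; the case \eqref{lb4} is identical up to the obvious sign change in $s_r$.

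A routine Taylor expansion of $\xi(\rho_+)$ around $\rho=\tfrac12$ shows that the backwards $\rho_+$-characteristic through $x^2$ crosses $L_{1-t_r}$ at spatial coordinate $w^2_1+4rt_rN^{2/3}$ and, continued further, hits the $\mathrm{e}_1$-axis at $Z^2_*:=(8r-\tfrac{3s_r}{4})N^{2/3}$. Using Lemma~\ref{spc} after the rescaling device from the proof of Lemma~\ref{lem:sb} (replacing $N$ by a shifted $\tilde N$ so that $\rho_+$ matches $\rho(\xi)$ of the rescaled direction), I will show
\[
\P\bigl(|Z^{\rho_+}_{o,x^2}-Z^2_*|>qN^{2/3}\bigr)\leq e^{-cq^3}
\]
for every $q$ above a fixed threshold. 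Taking $q:=\max(8r,Mt_r^{-1/3})$ makes this failure probability at most $e^{-cM^3}$, since $t_r<1$ gives $(Mt_r^{-1/3})^3\geq M^3$.

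On the good exit-point event, the bulk of $\pi^{\rho_+}_{o,x^2}$ coincides with the point-to-point geodesic from $(Z^{\rho_+}_{o,x^2},0)$ to $x^2$, and monotonicity of geodesics in their starting point yields the deterministic sandwich
\[
\pi_{(Z^2_*-qN^{2/3},0),\,x^2}\preceq \pi^{\rho_+}_{o,x^2}\big|_{\text{bulk}}\preceq \pi_{(Z^2_*+qN^{2/3},0),\,x^2}.
\]
A direct parametrization of these straight lines shows that their $L_{1-t_r}$-crossings have first coordinate in $w^2_1+4rt_rN^{2/3}\pm\tfrac{qt_r}{2}N^{2/3}$, an interval of width $\leq 4rt_rN^{2/3}+\tfrac{M}{2}(t_rN)^{2/3}$ by the choice of $q$. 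Next, Theorem~\ref{thm:locCylinder} (reversed) applied to each bounding point-to-point geodesic over its last $t_r$-fraction of horizontal distance gives, with probability $\geq 1-e^{-cM^3}$, a deviation at most $M(t_rN)^{2/3}$ between the geodesic's $L_{1-t_r}$-crossing and the straight-line crossing. Adding the sandwich width to the fluctuation bound pins $c^2$ into the interval $[w^2_1-M(t_rN)^{2/3},\,w^2_1+8rt_rN^{2/3}+M(t_rN)^{2/3}]$ claimed in \eqref{lb3}.

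The main technical obstacle is tuning the rescaled application of Lemma~\ref{spc}: the rescaling must be arranged so that $\rho_+=1/2+rN^{-1/3}$ matches $\rho(\xi)$ of the rescaled target direction to within the required $\tilde N^{-1/3}$ tolerance, and the leading-order corrections to $Z^2_*$ and to the straight-line crossing must be tracked consistently in $r$, $s_r$ and $t_r$. Choosing $M_0$ large enough to absorb the explicit constants in these leading-order computations (in particular the discrepancy between $(Z^2_*+qN^{2/3})t_r/2$ and $8rt_rN^{2/3}$) then closes the estimate.
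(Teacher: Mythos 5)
Your argument is correct in outline but takes a genuinely different, and heavier, route than the paper. The paper never localizes the geodesic in the bulk: it uses the fact that the stationary process restarted at a base point of $L_{1-t_r}$ (say $z^2+M(t_rN)^{2/3}\mathrm{e}_3$ with $z^2=w^2+8rt_rN^{2/3}\mathrm{e}_3$) is again a stationary LPP with density $\rho_+$, so the event $\{c^2\preceq z^2+M(t_rN)^{2/3}\mathrm{e}_3\}$ reduces, after replacing $x^2$ by the shifted endpoint $x^2+4rt_rN^{2/3}\mathrm{e}_3$ and using the ordering of geodesics, to the sign of a single exit point over the short scale $t_rN$, which \eqref{spc2} (resp.\ \eqref{spc3}) bounds by $e^{-c_0M^3}$ directly. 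You instead concentrate the exit point on the original axis at scale $qN^{2/3}$, $q=\max(8r,Mt_r^{-1/3})$, sandwich the bulk of $\pi^{\rho_+}_{o,x^2}$ between two point-to-point geodesics, and control their transversal fluctuations at $L_{1-t_r}$ via Theorem~\ref{thm:loc}/Theorem~\ref{thm:locCylinder}. Each step is sound: the two-sided exit-point concentration does follow from \eqref{spc2}--\eqref{spc3} with the translation and rescaling used in Lemma~\ref{lem:sb}, the sandwich is the standard geodesic ordering (up to an $O(1)$ shift at the bulk entry point), and your conversion $q\mapsto qt_r/2$ of the axis window into the $L_{1-t_r}$ window is right. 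Two repairs are needed, though. First, your sandwich already costs up to $\tfrac M2(t_rN)^{2/3}$ to the left of $w^2$, so adding a further fluctuation of $M(t_rN)^{2/3}$ overshoots the stated interval by a factor $3/2$; enlarging $M_0$ does not cure this --- you must run the fluctuation estimate with $M/2$ (or smaller) and absorb the resulting constant into $c$. Second, Theorem~\ref{thm:locCylinder} carries the hypothesis $M\leq(\tau N)^{1/3}/\log N$, so your proof does not literally cover $M$ in the window between $(t_rN)^{1/3}/\log N$ and the deterministic threshold of order $(t_rN)^{1/3}$, whereas the paper's route is unconditional in $M$. Neither issue affects the applications (where $M$ is logarithmic in $\delta^{-1}$), but the paper's restart-on-$L_{1-t_r}$ argument is shorter and avoids both.
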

\begin{proof}
Let $p^2$ be the point of intersection of the characteristic $\xi_+$ starting from $x^2$ with the line $L_{1-t_r}$. We have
\begin{equation}
 p^2=w^2+(4r t_r N^{2/3}+\cO(r^3 t_r))\mathrm{e}_3=w^2+4r t_r N^{2/3}(1+o(1))\mathrm{e}_3
\end{equation}
for $r\leq N^{1/3}/\log(N)$ and $N$ large enough, implying
\begin{equation}
	w^2 \preceq p^2\preceq w^2 + 8rt_r N^{2/3}\mathrm{e}_3=:z^2.
\end{equation}

By the order on geodesics $c^2\preceq \pi^{\rho_+}_{o,x^2+4r t_r N^{2/3}\mathrm{e}_3}$ and if $Z^{\rho_+}_{z^2+M(t_r N)^{2/3}\mathrm{e}_3,x^2+4r t_r N^{2/3}\mathrm{e}_3}<0$, then $\pi^{\rho_+}_{o,x^2+4r t_r N^{2/3}\mathrm{e}_3}\preceq z^2+M(t_r N)^{2/3}\mathrm{e}_3$. Thus
\begin{equation}
 \P(c^2\preceq z^2+M(t_r N)^{2/3}\mathrm{e}_3)\geq \P(Z^{\rho_+}_{z^2+M(t_r N)^{2/3}\mathrm{e}_3,x^2+4r t_r N^{2/3}\mathrm{e}_3}<0).
\end{equation}
Using \eqref{spc2}, the latter is bounded from above by $1-e^{-c_0 M^3}$ provided $M\geq M_0$ and $t_r N\geq N_0$.

A similar bound can be obtained for
\begin{equation}
 \P(w^2-M(t_r N)^{2/3}\mathrm{e}_3\preceq c^2)
\end{equation}
using \eqref{spc3}. Thus we have shown that \eqref{lb3} holds. The proof of \eqref{lb4} is almost identical and thus we do not repeat the details.
\end{proof}

Set
\begin{equation}
\begin{aligned}\label{q}
	q^1&=(1-t_r)N\mathrm{e}_4+ N^{2/3}(a s_r-2 M t_r^{2/3})\mathrm{e}_3,\\
	q^2&=(1-t_r)N\mathrm{e}_4+N^{2/3}(8 r t_r-a s_r+2M t_r^{2/3})\mathrm{e}_3
\end{aligned}
\end{equation}
and define the lines (see Figure~\ref{fig:dim})
\begin{equation}
\begin{aligned}
	D^1&=\{q^1+\alpha t_r N\mathrm{e}_4:0<\alpha<1\},\\
	D^2&=\{q^2+\alpha t_r N\mathrm{e}_4:0<\alpha<1\}.
\end{aligned}
\end{equation}

\begin{lemma}\label{lem:cyl2}
	 There exist $N_1,c,C>0$ such that for every $N\geq N_1$ and $M\leq N^{1/3}/\log(N)$, $r\leq N^{1/3}/\log(N)$,
	\begin{align}
	&\P\Big(D^1 \preceq \pi^{\rho_+}_{o,x^1}\Big)\geq 1- Ce^{-c M^3},\label{ub7}\\
	&\P\Big(\pi^{\rho_+}_{o,x^2} \preceq D^2\Big)\geq 1- Ce^{-c M^3}.\label{ub8}
	\end{align}
\end{lemma}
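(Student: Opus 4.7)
The plan is to prove \eqref{ub7} in detail; \eqref{ub8} follows by an entirely symmetric argument using the upper bound on $c^2$ from Lemma~\ref{lem:cyl}~\eqref{lb3} in place of the lower bound on $c^1$. The strategy combines the localization of the intersection point $c^1=\pi^{\rho_+}_{o,x^1}\cap L_{1-t_r}$ provided by Lemma~\ref{lem:cyl}~\eqref{lb4} with the optimal point-to-point fluctuation estimate of Theorem~\ref{thm:locCylinder}.

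Let $\cE_1:=\{c^1\succeq w^1-M(t_rN)^{2/3}\mathrm{e}_3\}$; by Lemma~\ref{lem:cyl}~\eqref{lb4}, $\P(\cE_1)\geq 1-e^{-cM^3}$. Unpacking coordinates, on $\cE_1$ one has $(c^1)_1\geq q^1_1+M(t_rN)^{2/3}$ and $(c^1)_2\leq q^1_2-M(t_rN)^{2/3}$, so $c^1$ sits strictly to the right of $D^1$ in the spatial direction $x_1-x_2$ by at least $2M(t_rN)^{2/3}$, and moreover $(c^1)_2$ is strictly below $q^1_2$. The $2M$ buffer built into the definition~\eqref{q} of $q^1$ is engineered precisely to yield these strict separations. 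Next, condition on the $\sigma$-algebra generated by the weights in $\{x_1+x_2<2(1-t_r)N\}$ (which determines $c^1$ and the part of $\pi^{\rho_+}_{o,x^1}$ below $L_{1-t_r}$); the segment of $\pi^{\rho_+}_{o,x^1}$ above $L_{1-t_r}$ is then the point-to-point geodesic from $c^1$ to $x^1$ in the independent bulk of i.i.d.\ $\mathrm{Exp}(1)$ weights. Under Assumption~\ref{ass} and on $\cE_1$, the transverse mismatch $|(x^1-c^1)_1-(x^1-c^1)_2|$ is of order $rt_rN^{2/3}+M(t_rN)^{2/3}\ll t_rN$, so the direction of this geodesic is bounded away from the axes, and Theorem~\ref{thm:locCylinder} applied at scale $\sim t_rN$ with $\tau=1$ yields an event $\cE_2$ of probability $\geq 1-e^{-cM^3}$ on which this geodesic stays within $M(t_rN)^{2/3}$ of the straight segment from $c^1$ to $x^1$.

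On $\cE_1\cap\cE_2$ the two endpoints of this segment have spatial positions $\geq(q^1_1-q^1_2)+2M(t_rN)^{2/3}$ (for $c^1$) and $=(q^1_1-q^1_2)+4M(t_rN)^{2/3}$ (for $x^1$); any convex combination inherits the lower bound $(q^1_1-q^1_2)+2M(t_rN)^{2/3}$, so after subtracting the fluctuation $M(t_rN)^{2/3}$ the geodesic itself has spatial position $>q^1_1-q^1_2$ throughout the time slab $[(1-t_r)N,N]$ and so lies strictly to the right of $D^1$. To promote this to the full relation $D^1\preceq\pi^{\rho_+}_{o,x^1}$, it remains to exclude a bad pair $p\prec d$ with $p\in\pi^{\rho_+}_{o,x^1}$ located below $L_{1-t_r}$ and $d\in D^1$; any such $p$ lies on the up-right subpath from $o$ to $c^1$, hence $p_2\leq(c^1)_2<q^1_2\leq d_2$, which already forbids $p\preceq d$. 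Combining the two events yields \eqref{ub7} with probability $\geq 1-Ce^{-cM^3}$. The only mildly delicate bookkeeping is verifying the strict inequality $(c^1)_2<q^1_2$ (and its mirror $(c^2)_1<q^2_1$ for \eqref{ub8}), which however follows directly from the $2M(t_rN)^{2/3}$ buffer in $q^1,q^2$; once this is in hand, the rest is a direct combination of Lemma~\ref{lem:cyl} and Theorem~\ref{thm:locCylinder}.
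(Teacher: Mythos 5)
Your geometric bookkeeping (the $2M(t_rN)^{2/3}$ buffer in $q^1$, the treatment of the portion of $\pi^{\rho_+}_{o,x^1}$ below $L_{1-t_r}$ via $(c^1)_2<q^1_2$) is correct and matches the intended picture, but the central step controlling the portion of the geodesic above $L_{1-t_r}$ contains a genuine gap. You condition on the weights in $\{x_1+x_2<2(1-t_r)N\}$ and assert that this $\sigma$-algebra ``determines $c^1$''. It does not: $c^1$ is the crossing point of the \emph{full} geodesic $\pi^{\rho_+}_{o,x^1}$, i.e.\ essentially the maximizer of $z\mapsto G^{\rho_+}_{o,z}+G_{z,x^1}$ over $z\in L_{1-t_r}$, and the second summand depends on the weights \emph{above} the line. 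Consequently, conditioned on the crossing location, the environment above $L_{1-t_r}$ is tilted, and you cannot apply Theorem~\ref{prop1} (or Theorem~\ref{thm:locCylinder}) to the geodesic from $c^1$ to $x^1$ as if $c^1$ were deterministic and the bulk above were fresh i.i.d.\ weights. A union bound over the possible values of $c^1$ does not rescue this either, since the bound $e^{-cM^3}$ must hold for fixed $M\geq M_0$ while the number of candidate crossing points grows with $N$.

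The paper's proof avoids this by never localizing around the random point: it fixes the \emph{deterministic} anchor $u^2=q^2-M(t_rN)^{2/3}\mathrm{e}_3$ (which is exactly the extreme point of the localization window for $c^2$ from Lemma~\ref{lem:cyl}), applies Theorem~\ref{prop1} to the geodesic $\pi_{u^2,x^2}$ between two deterministic points to get $\pi_{u^2,x^2}\preceq D^2$, and then uses Lemma~\ref{lem:cyl} together with the ordering of geodesics sharing the endpoint $x^2$ to conclude $\pi^{\rho_+}_{o,x^2}\preceq\pi_{u^2,x^2}$ above $L_{1-t_r}$ on the event $\{c^2\preceq u^2\}$. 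Your argument becomes correct if you make the same substitution: replace the conditioning step by ``on $\cE_1$ one has $u^1:=q^1+M(t_rN)^{2/3}\mathrm{e}_3\preceq c^1$, hence $\pi_{u^1,x^1}\preceq\pi^{\rho_+}_{o,x^1}$ above $L_{1-t_r}$, and $D^1\preceq\pi_{u^1,x^1}$ by Theorem~\ref{prop1}''. Everything else in your write-up can stay.
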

\begin{proof}
	We will show \eqref{ub8} as \eqref{ub7} can be proven similarly. Let
	\begin{equation}
	u^2=q^2-M(t_rN)^{2/3}\mathrm{e}_3.
	\end{equation}
	By Theorem~\ref{prop1} we have
	\begin{equation}\label{or}
		\P\big(\pi_{u^2,x^2}\preceq D^2\big)\geq 1- Ce^{-c M^3}.
	\end{equation}
	Recall the definition \eqref{c} of $c^1$. By Lemma~\ref{lem:cyl}
	\begin{equation}
	\P\big(c^2\preceq u^2\big)\geq 1- e^{-c M^3},
	\end{equation}
	which implies that
	\begin{equation}\label{or2}
			\P\big(\pi^{\rho_+}_{o,x^2}\preceq \pi_{u^2,x^2}\big)\geq 1- Ce^{-c M^3}.
	\end{equation}
	\eqref{or} and \eqref{or2} imply \eqref{ub8}.
\end{proof}

\begin{remark}\label{rem:assumptions}
Now we can discuss the origin of the conditions in Assumption~\ref{ass}.
The bound on $r$ comes from Lemma~\ref{lem:cyl2}. The condition on $M$ is a consequence of the conditions $q^2\preceq (1-t_r)N\mathrm{e}_4-\tfrac14 s_r N^{2/3} \mathrm{e_3}$ and also $(1-t_r)N\mathrm{e}_4+\tfrac14 s_r N^{2/3} \mathrm{e}_3\preceq q^1$. As we want  $M$ to grow to infinity we need to take $t_r \ll s_r / r$.
\end{remark}

For $0<\tau<1$ and $\sigma\in\R_+$, define the anti-diagonal segment
\begin{equation}
\cI^{\sigma,\tau}=\{(1-\tau) N\mathrm{e}_4+ i\mathrm{e}_3, i\in [-\tfrac\sigma{2} N^{2/3},\tfrac{\sigma}{2}N^{2/3}]\},
\end{equation}
located right below the cylinder $\cC^{\sigma,\tau}$.
Define the events
\begin{equation}
\begin{aligned}
	\cO&=\big\{Z^{\rho_-}_{o,x}\in [-15rN^{2/3},-rN^{2/3}],Z^{\rho_+}_{o,x}\in [rN^{2/3},15rN^{2/3}]\quad \forall x\in \cC^{s_r/2,t_r}\big\},\\
	\cB&= \Big\{\{\pi^{\rho_-}_{o,x} \cap \cI^{s_r,t_r}\neq \emptyset\} \cap \{\pi^{\rho_+}_{o,x} \cap \cI^{s_r,t_r}\neq \emptyset\} \quad \forall x\in \cC^{s_r/2,t_r}\Big \}.
\end{aligned}
\end{equation}

\begin{corollary}\label{cor:up}
Under Assumption~\ref{ass} there exists $C,c,N_0>0$ such that for $N>N_0$
\begin{align}
\P(\cO)&\geq 1-2e^{-cr^3}-C e^{-cM^3},\label{r1}\\
\P(\cB)&\geq 1-e^{-cM^3}\label{r2}.
\end{align}
\end{corollary}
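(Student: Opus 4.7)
The plan is to prove both \eqref{r1} and \eqref{r2} on the event
\[
\cG:=\cA_1\cap\cA_2\cap\cE_+\cap\cE_-,
\]
where $\cE_\pm$ is the intersection of the two events of Lemma~\ref{lem:cyl2} for density $\rho_\pm$ (the $\rho_-$-version is obtained from the $\rho_+$-version by the spatial symmetry $(x_1,x_2)\leftrightarrow(x_2,x_1)$, which reverses $\preceq$, swaps $x^1\leftrightarrow x^2$, and sends $\rho_+\leftrightarrow\rho_-$). A union bound then gives $\P(\cG)\ge 1-2e^{-cr^3}-Ce^{-cM^3}$: the $\cA_1\cap\cA_2$ part comes from the two tail bounds established inside the proof of Lemma~\ref{lem:sb}, and the $\cE_+\cap\cE_-$ part from Lemma~\ref{lem:cyl2}. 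Before using $\cG$ I would record the short algebraic fact that under Assumption~\ref{ass} one has $Mt_r^{2/3}\le s_r/16-4rt_r$, hence $q^1_j\ge(s_r/4)N^{2/3}$ and $q^2_j\le-(s_r/4)N^{2/3}$; consequently on $\cE_\pm$ the path $\pi^{\rho_\pm}_{o,x^1}$ stays in $\{j\ge (s_r/4)N^{2/3}\}$ and $\pi^{\rho_\pm}_{o,x^2}$ stays in $\{j\le-(s_r/4)N^{2/3}\}$ throughout times $[(1-t_r)N,N]$.

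For \eqref{r1} I would use what may be called a \emph{geodesic prefix} trick. Fix $x\in\cC^{s_r/2,t_r}$ at time $i_x\in[(1-t_r)N,N]$ and spatial coordinate $j_x\in[-(s_r/4)N^{2/3},(s_r/4)N^{2/3}]$, and let $y$ be the vertex of $\pi^{\rho_+}_{o,x^1}$ at time $i_x$. On $\cE_+$ we have $j_y\ge (s_r/4)N^{2/3}\ge j_x$, and since $y$ and $x$ share their time coordinate this gives $x\preceq y$. Because $y$ lies on the a.s.-unique geodesic $\pi^{\rho_+}_{o,x^1}$, its restriction to $o\to y$ is the a.s.-unique $\pi^{\rho_+}_{o,y}$, so $Z^{\rho_+}_{o,y}=Z^{\rho_+}_{o,x^1}\le 15rN^{2/3}$ by $\cA_1$. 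Monotonicity of the exit point in the $\preceq$-order of the endpoint then yields $Z^{\rho_+}_{o,x}\le Z^{\rho_+}_{o,y}\le 15rN^{2/3}$. The matching lower bound $Z^{\rho_+}_{o,x}\ge rN^{2/3}$ follows symmetrically with $\pi^{\rho_+}_{o,x^2}$ in place of $\pi^{\rho_+}_{o,x^1}$, and the bounds on $Z^{\rho_-}_{o,x}$ are obtained from the identical argument applied on $\cE_-\cap\cA_2$.

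For \eqref{r2} I would run the very same prefix argument at the level $i=(1-t_r)N$: the crossing of $\pi^{\rho_\pm}_{o,x}$ with $L_{1-t_r}$ is $\preceq$-sandwiched between $c^2$ (and its $\rho_-$-analogue) and $c^1$ (and its $\rho_-$-analogue), so it is enough to check that $c^1,c^2\in\cI^{s_r,t_r}$. Lemma~\ref{lem:cyl} confines the spatial coordinate of $c^1$ to the interval $[(3/8)s_r-Mt_r^{2/3},\,(3/8)s_r+8rt_r+Mt_r^{2/3}]N^{2/3}$; Assumption~\ref{ass} delivers $Mt_r^{2/3}\le s_r/16$ and $rt_r\le s_r/64$ (the latter coming from $s_r\ge 16M_0t_r^{2/3}+64rt_r$), which reduces this interval to a subset of $[-(s_r/2)N^{2/3},(s_r/2)N^{2/3}]$; the bound on $c^2$ is symmetric.

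The main obstacle is that a generic $x$ in the interior of $\cC^{s_r/2,t_r}$ is $\preceq$-comparable to neither $x^1$ nor $x^2$, so $\cA_1$ and $\cA_2$ cannot be converted into bounds on $Z^{\rho_\pm}_{o,x}$ by endpoint-monotonicity alone. The geodesic-prefix device, made possible by the side-way go-around control supplied by Lemma~\ref{lem:cyl2}, furnishes at every intermediate time a vertex on $\pi^{\rho_\pm}_{o,x^j}$ that \emph{is} $\preceq$-comparable to $x$ and shares its exit point with $x^j$; this upgrades the two extremal estimates of Lemma~\ref{lem:sb} to uniform control over the whole cylinder, after which only the mechanical verification that the constants in Assumption~\ref{ass} keep $c^1,c^2$ inside $\cI^{s_r,t_r}$ remains.
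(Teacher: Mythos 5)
Your proposal is correct and follows essentially the same route as the paper: Lemma~\ref{lem:cyl2} together with $D^2\preceq\cC^{s_r/2,t_r}\preceq D^1$ sandwiches $\pi^{\rho_\pm}_{o,x}$ between $\pi^{\rho_\pm}_{o,x^2}$ and $\pi^{\rho_\pm}_{o,x^1}$ (your ``geodesic prefix'' device is precisely the order-of-geodesics step written out), after which $\cA_1\cap\cA_2$ gives \eqref{r1} and Lemma~\ref{lem:cyl} gives \eqref{r2}. One small arithmetic point in your verification for \eqref{r2}: the decoupled bounds $Mt_r^{2/3}\le s_r/16$ and $rt_r\le s_r/64$ only yield $(3/8)s_r+8rt_r+Mt_r^{2/3}\le(9/16)s_r$, which exceeds $s_r/2$; you need the coupled form $Mt_r^{2/3}+4rt_r\le s_r/16$ from Assumption~\ref{ass}, which gives $8rt_r+Mt_r^{2/3}\le s_r/16+4rt_r\le s_r/8$ and hence exactly the required $\le s_r/2$.
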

\begin{proof}
	It might be helpful to take a look at Figure~\ref{fig:dim} while reading the proof. We prove in details the statements for $\rho_+$, since the proof for $\rho_-$ is almost identical.

By our choice of parameters,
\begin{equation}
	D^2\preceq \cC^{s_r/2,t_r} \preceq D^1.
\end{equation}
By Lemma~\ref{lem:cyl2} and order of geodesics, with probability at least $1-C e^{-c M^3}$,
\begin{equation}\label{eq3.75}
\pi^{\rho_+}_{o,x^2}\preceq \pi^{\rho_+}_{o,x}\preceq \pi^{\rho_+}_{o,x^1}
\end{equation}
for all $x\in \cC^{s_r/2,t_r}$. By Lemma~\ref{lem:sb} the exit point of the geodesics to $x^1$ and $x^2$ for the stationary model with density $\rho_+$ lies between $rN^{2/3}$ and $15r N^{2/3}$ with probability at least $1-e^{-c r^3}$, which leads to \eqref{r1}.

To prove \eqref{r2}, first notice that Assumption ~\ref{ass} implies
\begin{equation}
w^1+(8rt_rN^{2/3}+M(t_rN)^{2/3})\mathrm{e}_3 \preceq w^1+\tfrac12 s_r\mathrm{e}_3
\end{equation}
and
\begin{equation}
w^2-\tfrac12 s_r \mathrm{e}_3 \preceq w^2-M(t_rN)^{2/3}\mathrm{e}_3.
\end{equation}
Thus by Lemma~\ref{lem:cyl} we know that the crossing of $\pi^{\rho_+}_{o,x^1}$ and $\pi^{\rho_+}_{o,x^2}$ with $\cI^{s_r,t_r}$ occurs with probability at least $1-e^{-c M^3}$. This, together with \eqref{eq3.75} implies \eqref{r2}.
\end{proof}

\begin{proof}[Proof of Lemma~\ref{cor:og}]
	Consider the straight line going from $(0,rN^{2/3})$ to $x^1$ parameterized by $(u,l_1(u))$ with
\begin{equation}
l_1(u)=r N^{2/3}+u \frac{N-(\tfrac38 s_r +r)N^{2/3}}{N+\tfrac38 s_r N^{2/3}},
\end{equation}
and the straight line $(u,l_2(u))$, which overlaps in its first part with the boundary of $\cR^{r/2,1/4}$, defined through
\begin{equation}
l_2(u)=\frac{r}{2}N^{2/3}+u.
\end{equation}
By our assumption $s_r\leq r$,
\begin{equation}\label{lb}
		\inf_{0\leq u \leq \frac{N}{4}}( l_1(u)-l_2(u))\geq \frac{r}{2}N^{2/3}-\frac{\frac{7}{4}r N^{2/3}}{N+\frac38 r N^{2/3}} \frac{N}{4}\geq \frac1{16}r N^{2/3}.
\end{equation}
	It follows from \eqref{lb} and Theorem~\ref{prop1} that for some $C,c>0$
\begin{equation}\label{eq4.80}
		\P(\Gamma_u^l(\pi_{(0,rN^{2/3}),x^1})<l_2(u) \textrm{ for some }0\leq u \leq \tfrac{N}{4})\leq Ce^{-cr^3}.
\end{equation}
By the analogue of Lemma~\ref{lem:cyl2} for $\rho_-$, with probability at least $1-C_1 e^{-c_1 M^3}$, $\pi^{\rho_-}_{o,x}\preceq \pi^{\rho_-}_{o,x^1}$ for all $x\in \cC^{s_r/2,t_r}$. Furthermore, on the event $\cA_2$, $\rho^{\rho_-}_{o,x^1}$ starts from a point above $(0,r N^{2/3})$. Combining these two facts with \eqref{eq4.80} we get
	\begin{equation}\label{lb5}
		\P\Big(\pi^{\rho_-}_{o,x}\preceq \pi_{(0,rN^{2/3}),x^1} \preceq \cR^{r/2,1/4} \quad \forall x\in \cC^{s_r/2,t_r}\Big)\geq 1-C_1 e^{-c_1 M^3}-Ce^{-cr^3}.
	\end{equation}
	A similar result can be obtained for $\pi^{\rho_+}_{o,x}$, which combined with \eqref{lb5} gives
	\begin{equation}\label{eq4.82}
		\P\Big(\pi^{\rho_-}_{o,x} \preceq \cR^{r/2,1/4} \preceq \pi^{\rho_+}_{o,x}\quad \forall x\in \cC^{s_r/2,t_r}\Big)\geq 1-2C_1 e^{-c_1 M^3}-2Ce^{-cr^3}.
	\end{equation}
By the order of geodesics, on the event of \eqref{eq4.82}, every geodesic starting in $\cR^{r/2,1/4}$ and ending at $x$, is sandwiched between $\pi^{\rho_-}_{o,x}$ and $\pi^{\rho_+}_{o,x}$.

Next, for each point $x\in \cC^{s_r/2,t_r}$, its associated density  $\rho(x)$ satisfies $|\rho(x)-\tfrac12|\leq N^{-1/3}$ for all $s_r\leq 4(1-t_r)$ and $N$ large enough. By \eqref{spc1} of Lemma~\ref{spc}, with probability at least $1-e^{-c_0 r^3}$ the exit point of $\pi^{1/2}_{o,x}$ is also between $-r N^{2/3}$ and $r N^{2/3}$. Thus by appropriate choice of constants $C,c$,  the sandwitching of $\pi^{1/2}_{o,x}$ in \eqref{eq3.78} holds.
\end{proof}

\section{Lower bound for the probability of no coalescence }\label{sectLowerBound}

\subsection{Point-to-point case: proof of Theorem~\ref{thm:coal}}
Using the results of Section~\ref{SectLocalStat} we first relate the bound for the coalescing point of $\pi^{1/2}_{o,x}$ and $\pi_{o,x}$ to that of the coalescing point of $\pi^{\rho_+}_{o,x}$ and $\pi^{\rho_-}_{o,x}$.
\begin{lemma}\label{cor:cub}
Under Assumption~\ref{ass}, there exists $C,c>0$ such that
\begin{equation}\label{eq4.1}
\begin{aligned}
	&\P\Big(C_p(\pi^{1/2}_{o,x},\pi_{y,x})\leq L_{1-t_r} \quad \forall x\in \cC^{s_r/2,t_r},y\in\cR^{r/2,1/4}\Big)\geq 1-Ce^{-cM^3}-2e^{-cr^3}\\
&-\P\Big(\exists x\in \cC^{s_r,t_r}: C_p(\pi^{\rho_+}_{o,x},\pi^{\rho_-}_{o,x})\geq \cI^{s_r/2,t_r},\cB \Big)
\end{aligned}
\end{equation}
for all $N$ large enough.
\end{lemma}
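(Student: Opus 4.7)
The plan is to exploit the sandwich relation of Lemma~\ref{cor:og} to reduce the coalescence statement for the heterogeneous pair $(\pi^{1/2}_{o,x},\pi_{y,x})$ to the homogeneous pair of stationary geodesics $(\pi^{\rho_-}_{o,x},\pi^{\rho_+}_{o,x})$, both rooted at $o$. Denote by $\mathcal{S}$ the sandwiching event of \eqref{eq3.78}; then Lemma~\ref{cor:og} yields $\P(\mathcal{S}^c)\leq Ce^{-cM^3}+2e^{-cr^3}$.

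The central step is a monotonicity-of-geodesics principle: if $\pi^{\rho_-}_{o,x}$ and $\pi^{\rho_+}_{o,x}$ share a common lattice point $p$, then any up-right path $\gamma$ ending at $x$ that is $\preceq$-sandwiched between them must pass through $p$. Indeed, pick any down-right path $\cY$ through $p$; since an up-right path and a down-right path meet in at most one point, $\cY\cap\pi^{\rho_\pm}_{o,x}=\{p\}$, and the sandwich definition of $\preceq$ forces $\cY\cap\gamma=\{p\}$ as well. Applied to $p=C_p(\pi^{\rho_+}_{o,x},\pi^{\rho_-}_{o,x})$ and to each of $\gamma=\pi^{1/2}_{o,x},\pi_{y,x}$, this yields the implication, valid on $\mathcal{S}$,
\begin{equation*}
C_p(\pi^{\rho_+}_{o,x},\pi^{\rho_-}_{o,x})\leq L_{1-t_r}\ \Longrightarrow\ C_p(\pi^{1/2}_{o,x},\pi_{y,x})\leq L_{1-t_r},
\end{equation*}
uniformly in $x\in\cC^{s_r/2,t_r}$ and $y\in\cR^{r/2,1/4}$.

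Consequently, the complement of the event on the left-hand side of \eqref{eq4.1} lies in the union of $\mathcal{S}^c$ and the event that some $x\in\cC^{s_r/2,t_r}\subset\cC^{s_r,t_r}$ has $C_p(\pi^{\rho_+}_{o,x},\pi^{\rho_-}_{o,x})>L_{1-t_r}$. Intersecting the latter with the event $\cB$ of Corollary~\ref{cor:up}, on which both stationary geodesics cross $\cI^{s_r,t_r}$, a coalescence strictly past time $(1-t_r)N$ is captured by $\{C_p(\pi^{\rho_+}_{o,x},\pi^{\rho_-}_{o,x})\geq\cI^{s_r/2,t_r}\}$, while $\cB^c$ is absorbed into the $Ce^{-cM^3}$ term using $\P(\cB^c)\leq e^{-cM^3}$ from Corollary~\ref{cor:up}. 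A union bound then yields \eqref{eq4.1}. The main delicacy is the last geometric step identifying $\{C_p>L_{1-t_r}\}\cap\cB$ with $\{C_p\geq\cI^{s_r/2,t_r}\}$, which relies on the fact that the endpoints $x$ lie in the narrower strip $\cC^{s_r/2,t_r}$ whereas the stationary geodesics cross the wider segment $\cI^{s_r,t_r}$, so that any crossing point of the two stationary geodesics that has not yet met by time $(1-t_r)N$ is automatically forced into the region above $\cI^{s_r/2,t_r}$.
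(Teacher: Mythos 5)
Your proof is correct and follows essentially the same route as the paper: bound the complement of the target event by $\P(\mathcal{S}^c)+\P(\cB^c)$ plus the probability that the two stationary geodesics fail to coalesce before $L_{1-t_r}$ on $\cB$, using the squeeze property of $\preceq$ to transfer coalescence of $(\pi^{\rho_-}_{o,x},\pi^{\rho_+}_{o,x})$ to the sandwiched geodesics $\pi^{1/2}_{o,x}$ and $\pi_{y,x}$. One minor inaccuracy: an up-right and a down-right path can share more than one point (e.g.\ two consecutive points along a common horizontal step), so your justification of the squeeze should instead note that on each antidiagonal level $\cY$ between the coalescence point and $x$, where $\pi^{\rho_-}_{o,x}$ and $\pi^{\rho_+}_{o,x}$ agree, every point of $\gamma\cap\cY$ is both $\succeq$ and $\preceq$ every point of $\pi^{\rho_\pm}_{o,x}\cap\cY$, forcing equality.
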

\begin{proof}
We bound the probability of the complement event. Define the event
\begin{equation}\label{eventG}
\cG=\{\pi^{\rho_-}_{y,x}\preceq \pi^{1/2}_{o,x},\pi_{y,x} \preceq \pi^{\rho_+}_{y,x} \quad \forall x\in \cC^{s_r/2,t_r}, y\in\cR^{r/2,1/4}\}.
\end{equation}
Then
\begin{equation}
\begin{aligned}
& \P\Big(\exists x \in \cC^{s_r/2,t_r},y\in\cR^{r/2,1/4}: C_p(\pi^{1/2}_{o,x},\pi_{y,x})>L_{1-t_r}\Big) \leq \P(\cB^c)+\P(\cG^c)\\
&+ \P\Big(\{\exists x \in \cC^{s_r/2,t_r},y\in\cR^{r/2,1/4}: C_p(\pi^{1/2}_{o,x},\pi_{y,x})>L_{1-t_r}\}\cap \cB\cap \cG\Big).
\end{aligned}
\end{equation}
Note that if $\cB$ and $\cG$ hold, then both geodesics $\pi^{1/2}_{o,x}$ and $\pi_{y,x}$ are sandwiched between $\pi^{\rho_-}_{o,x}$ and $\pi^{\rho_+}_{o,x}$ and their crossings with the line $L_{1-t_r}$ occurs in the segment $\cI^{s_r,t_r}$. Furthermore, if $C_p(\pi^{1/2}_{o,x},\pi_{y,x})>L_{1-t_r}$ and $\cG$ holds, then also $C_p(\pi^{\rho_+}_{o,x},\pi^{\rho_-}_{o,x})>L_{1-t_r}$. This, together with Corollary~\ref{cor:up} and Lemma~\ref{cor:og} proves the claim.
\end{proof}
Thus, to prove Theorem~\ref{thm:coal} it remains to get an upper bound for the last probability in \eqref{eq4.1}.

	For $x,y,z\in\Z^2$ such that $x\leq y \leq z$, let $\gamma_{x,z}$ be an up-right path going from $x$ to $z$. Define the exit point of $\gamma_{x,z}$ with respect to the point $y$
	\begin{equation}
		\cZ_y(\gamma_{x,z})=\sup\{u\in\gamma_{x,z}:u_1=y_1 \text{ or } u_2=y_2\}.
	\end{equation}
	Define the sets
\begin{equation}
\begin{aligned}
\cH&=\{(1-t_r)N\mathrm{e}_4+\tfrac {s_r}2N^{2/3}\mathrm{e}_3-i\mathrm{e}_1, \quad 1 \leq i \leq \tfrac {s_r}2N^{2/3}\},\\
\cV&=\{(1-t_r)N\mathrm{e}_4-\tfrac {s_r}2N^{2/3}\mathrm{e}_3-i\mathrm{e}_2, \quad 1 \leq i \leq \tfrac {s_r}2N^{2/3}\},
\end{aligned}
\end{equation}
and the point
\begin{equation}
v^c=[(1-t_r)N-\tfrac{s_r}{2} N^{2/3}]\mathrm{e}_4. 
\end{equation}
	 Define the event
	\begin{equation}
		E_1=\{\cZ_{v^c}(\pi^{\rho_+}_{o,x})\in \cH\cup\cV ,\cZ_{v^c}(\pi^{\rho_-}_{o,x})\in \cH\cup\cV \quad \forall x\in \cC^{s_r/2,t_r}\}.
	\end{equation}
	Note that (see Figure~\ref{fig:coal})
	\begin{equation}
		E_1=\cB,
	\end{equation}
	since to cross the set $\cI^{s_r,t_r}$ the geodesic must cross either $\cH$ or $\cV$ and, viceversa, if the geodesic crosses $\cH\cup\cV$, then it crosses also $\cI^{s_r,t_r}$.
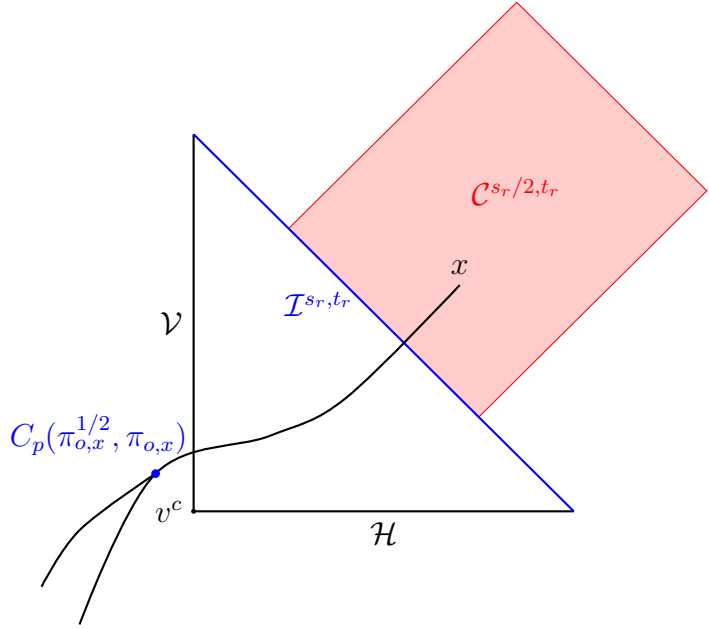
\begin{figure}
\begin{center}
	\begin{tikzpicture}[scale=0.5, every node/.style={transform shape}]
	node/.style={transform shape}]
	\def\s{2.0}
	\draw [black,thick](0,0) -- (0,10);
	\node [scale=\s,left,black] at (0,5) {$\cV$};
	\draw [black,thick](0,0) -- (10,0);
	\node [scale=\s,below,black] at (5,0) {$\cH$};
	\draw [color=red,fill=red!20] (2.5,7.5) -- (7.5,2.5) -- (13.5,8.5) --(8.5,13.5) -- (2.5,7.5);
	\draw [blue,thick] (0,10)--(10,0);
	\node [scale=\s,red] at (8.5,8.5) {$\cC^{s_r/2,t_r}$};
	\node [scale=\s,blue] at (3.3,5.5) {$\cI^{s_r,t_r}$};
	\draw[black,thick] plot [smooth,tension=0.6] coordinates{(-3,-3) (-1,1) (2,2) (4,3) (7,6) };
	\draw[black,thick] plot [smooth,tension=0.6] coordinates{(-4,-2) (-3,-0.5) (-1,1)};
	\node [scale=\s,above,blue] at (-2.5,1.2) {$C_p(\pi^{1/2}_{o,x},\pi_{o,x})$};
	\node [scale=\s,above] at (7,6) {$x$};
	\draw [blue,fill](-1,1) circle (1mm);
	\draw [black,fill](0,0) circle (0.5mm);
	\node [scale=\s,left] at (0,0) {$v^c$};
	\end{tikzpicture}
	\caption{On the event $E_1\cap E_2$, the geodesics $\pi^{\rho_+}_{o,x}$ and $\pi^{\rho_-}_{o,x}$ coalesce before crossing $\cI^{s_r,t_r}$.}
\label{fig:coal}
\end{center}
\end{figure}

On the edges of $\Z_{\geq 0}^2$ define the random field $B$ through
	\begin{equation}\label{bf3}
		B_{x,x+\mathrm{e}_k}=G(\pi_{o,x+\mathrm{e}_k})-G(\pi_{o,x}),\quad k=1,2
 \end{equation}
for all $x>o$. Similarly we define
\begin{equation}
	\begin{aligned} B^{\rho_+}_{x,x+\mathrm{e}_k}&=G(\pi^{\rho_+}_{o,x+\mathrm{e}_k})-G(\pi^{\rho_+}_{o,x}), \quad k=1,2,\\ B^{\rho_-}_{x,x+\mathrm{e}_k}&=G(\pi^{\rho_-}_{o,x+\mathrm{e}_k})-G(\pi^{\rho_-}_{o,x}),\quad k=1,2.\label{bf2}
	\end{aligned}
\end{equation}
	 One can couple the random fields $B,B^{\rho_-}$ and $B^{\rho_+}$ (see \cite[Theorem 2.1]{FS18}) such that
\begin{equation}
	 \begin{aligned}\label{og}
	 	B^{\rho_-}_{x-\mathrm{e}_1,x}&\leq B_{x-\mathrm{e}_1,x} \leq B^{\rho_+}_{x-\mathrm{e}_1,x},\\
	 	B^{\rho_+}_{x-\mathrm{e}_2,x}&\leq B_{x-\mathrm{e}_2,x} \leq B^{\rho_-}_{x-\mathrm{e}_2,x}.
	 \end{aligned}
\end{equation}

Let $o\leq v\leq x$. Then, since each geodesic has to pass either by one site on the right or above $v$, we have
\begin{equation}
\begin{aligned}
	 G_{o,x}=G_{o,v}+\max\Big\{&\sup_{0\leq l \leq x_1-v_1}\sum_{i=0}^l B_{v+i\mathrm{e}_1,v+(i+1)\mathrm{e}_1}+G_{v+(l+1)\mathrm{e}_1+\mathrm{e}_2,x},\\
&\sup_{0\leq l\leq x_2-v_2}\sum_{i=0}^l B_{v+i\mathrm{e}_2,v+(i+1)\mathrm{e}_2}+G_{v+(l+1)\mathrm{e}_2+\mathrm{e}_1,x}\Big\}.
\end{aligned}
\end{equation}
Thus setting $v=v^c$, on the event $E_1\cap \cG$, for every $x\in\cC^{s_r/2,t_r}$
\begin{equation}
\begin{aligned}\label{wr}
G_{o,x}=G_{o,v}+\max\Big\{&\sup_{u\in \cH}\sum_{i=0}^{u_1-v^c_1} B_{v^c+i\mathrm{e}_1,v^c+(i+1)\mathrm{e}_1}+G_{u+\mathrm{e}_2,x},\\
&\sup_{u\in \cV}\sum_{i=0}^{u_2-v^c_2} B_{v^c+i\mathrm{e}_2,v^c+(i+1)\mathrm{e}_2}+G_{u+\mathrm{e}_1,x}\Big\}.
\end{aligned}
\end{equation}
$G^{\rho_+}_{o,x}$ and $G^{\rho_-}_{o,x}$ can be decomposed in the same way.

This shows that on the event $E_1\cap \cG$ the restriction of the geodesics $\pi_{o,x},\pi^{\rho_+}_{o,x},\pi^{\rho_-}_{o,x}$ to $\Z^2_{>v^c}$ is a function of the weights \eqref{bf3}--\eqref{bf2} and the bulk weights east-north to $v^c$. More precisely, define $\cE^B_{v^c}$ to be the set of edges in the south-west boundary of $\Z^2_{>v^c}$ and that are incident to $\cH\cup \cV$ i.e.
\begin{equation}
	 \begin{aligned}
	 	\cE_{v^c}^H&=\{(x-\mathrm{e}_2,x):x\in \cH\},\\
	 	\cE_{v^c}^V&=\{(x-\mathrm{e}_1,x):x\in \cV\},\\
	 	\cE^B_{v^c}&=\cE_{v^c}^H\cup \cE_{v^c}^V.
	 \end{aligned}
\end{equation}
The representation \eqref{wr} show that on the event $E_1\cap \cG$, for every $x\in\cC^{s_r/2,t_r}$, the restrictions of the geodesics $\pi_{o,x},\pi^{\rho_+}_{o,x},\pi^{\rho_-}_{o,x}$ to $\Z_{> v^c}$ are functions of the bulk weights
	 \begin{equation}\label{w}
	 	\{\omega_x\}_{x\in \Z^2_{>v^c}}
	 \end{equation}
	 and the boundary weights
	 \begin{equation}\label{bf5}
	 	\{B_e\}_{e\in\cE^B_{v^c}}, \{B^{\rho_+}_e\}_{e\in\cE^B_{v^c}}\textrm{ and }\{B^{\rho_-}_e\}_{e\in\cE^B_{v^c}}
	 \end{equation}
respectively. The stationary geodesics with densities $\rho_+$ and $\rho_-$ will coalesce before reaching $\cI^{s_r,t_r}$ if the following event holds true:
	 \begin{equation}
	 	E_2=\{\exists e\in\cE^B_{v^c}: B^{\rho_+}_e= B^{\rho_-}_e \}.
	 \end{equation}
	 \begin{lemma}\label{lem:e1e2}
We have
	 	\begin{equation}
	 		 \{\exists x\in \cC^{s_r/2,t_r}: C_p(\pi^{\rho_+}_{o,x},\pi^{\rho_-}_{o,x})\geq \cI^{s_r,t_r}, \cB \} \subseteq E_1\cap E_2.
	 	\end{equation}
	 \end{lemma}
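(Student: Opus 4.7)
The inclusion splits as LHS $\subseteq E_1$, immediate from the identification $E_1 = \cB$ noted in the excerpt, and LHS $\subseteq E_2$, which is the substantive content. For the latter I will argue by contradiction: assume the LHS event holds while $B^{\rho_+}_e \neq B^{\rho_-}_e$ for every $e \in \cE^B_{v^c}$. The monotonicity \eqref{og} then forces strict signs for $D_e := B^{\rho_+}_e - B^{\rho_-}_e$, namely $D_e > 0$ on horizontal edges and $D_e < 0$ on vertical edges of $\cE^B_{v^c}$.

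Pick $x^* \in \cC^{s_r/2, t_r}$ realizing the late coalescence and let $u^\pm := \cZ_{v^c}(\pi^{\rho_\pm}_{o, x^*})$. By $E_1 = \cB$, both $u^\pm$ lie in $\cH \cup \cV$. If $u^+ = u^-$, the two geodesics share a common entry edge into $\Z^2_{>v^c}$ and, by uniqueness of bulk geodesics under the shared weights $\{\omega_y\}$, coincide from the entry onward, placing the coalescence before $\cI^{s_r, t_r}$ and contradicting the LHS. Hence $u^+ \neq u^-$. Using the representation \eqref{wr} to write $G^{\rho_\pm}_{o, x^*} = G^{\rho_\pm}_{o, v^c} + F^{\rho_\pm}(u^\pm)$, where $F^\rho(u)$ is the sum of $B^\rho$ along the boundary path from $v^c$ to $u$ plus the common bulk LPP, the argmax conditions $F^{\rho_+}(u^+) \geq F^{\rho_+}(u^-)$ and $F^{\rho_-}(u^-) \geq F^{\rho_-}(u^+)$, added together, cancel the common bulk contributions and yield
\begin{equation*}
H(u^+) \geq H(u^-),
\end{equation*}
where $H(u)$ denotes the cumulative sum of $D_e$ over edges on the boundary path from $v^c$ to $u$.

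The density-monotonicity of the coupling in \cite{FS18} gives $\pi^{\rho_-} \preceq \pi^{\rho_+}$ and hence $u^- \preceq u^+$. The case $u^- \in \cH$ with $u^+ \in \cV$ is excluded by coordinate incompatibility with $u^- \preceq u^+$. In the remaining feasible cases, I will combine the strict sign pattern of $D_e$ with the positions of $u^\pm$ and the inequality $H(u^+) \geq H(u^-)$ to derive a contradiction. I expect the main obstacle to be the cases where $u^+$ and $u^-$ sit in the same segment (both in $\cH$ or both in $\cV$): there the pointwise sign of $D_e$ from \eqref{og} alone is consistent with $H(u^+) \geq H(u^-)$, so closing the argument will require invoking the finer queueing structure of the coupling of \cite{FS18} beyond the pointwise bound \eqref{og}, for instance by tracking the second-class particle trajectory that encodes the discrepancy set $\{e : B^{\rho_+}_e \neq B^{\rho_-}_e\}$.
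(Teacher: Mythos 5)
There is a genuine gap, and it stems from reading the event $E_2$ with the wrong sign. The display defining $E_2$ in the paper contains a typo (``$=$'' should be ``$\neq$''): the operative definition, forced by the proof of Corollary~\ref{cor:e2} (where $E_2=E_2^H\cup E_2^V$ with $E_2^H=\{\exists e\in\cE^H_{v^c}:B^{\rho_+}_e\neq B^{\rho_-}_e\}$ and $\P((E_2^H)^c)=\P(\cA^m)$) and by the final bound $\P(E_2)\leq Cs_r^{1/2}r$, is
\begin{equation*}
E_2=\{\exists e\in\cE^B_{v^c}:\,B^{\rho_+}_e\neq B^{\rho_-}_e\}.
\end{equation*}
You instead took the complement of the literal ``$=$'' version, i.e.\ you assumed that \emph{every} boundary increment strictly disagrees and tried to contradict late coalescence. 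That is the wrong direction: in the coupling of \cite{FS18}, disagreement of the increments is precisely the signature of non-coalescence, so the statement you are attacking is essentially the converse of the intended one and is not expected to be provable (as you yourself observe, the sign information from \eqref{og} cannot rule out the case where both entry points lie in the same segment, and no amount of refinement of your argmax inequality $H(u^+)\geq H(u^-)$ will close this, because the implication is false in spirit).

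With the correct definition the proof is short and is the one the paper gives: on $E_1\cap E_2^c$ all boundary increments on $\cE^B_{v^c}$ coincide for the two densities, and by the representation \eqref{wr} the restrictions of $\pi^{\rho_+}_{o,x}$ and $\pi^{\rho_-}_{o,x}$ to $\Z^2_{>v^c}$ are the \emph{same} deterministic function of $\{B^{\rho_\pm}_e\}_{e\in\cE^B_{v^c}}$ and of the common bulk weights $\{\omega_y\}_{y\in\Z^2_{>v^c}}$; identical inputs give identical paths, so $C_p(\pi^{\rho_+}_{o,x},\pi^{\rho_-}_{o,x})\leq\cH\cup\cV\leq\cI^{s_r,t_r}$ for every $x\in\cC^{s_r/2,t_r}$, contradicting the left-hand event. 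Your observation that $u^+=u^-$ forces coincidence of the geodesics beyond the entry point is the special case of this argument for a single $x$; the point is that no case analysis on $u^+\neq u^-$ is needed once one notes that equality of \emph{all} boundary increments makes the two variational problems in \eqref{wr} literally identical.
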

 \begin{proof}
The representation \eqref{wr} for the stationary models imply that $G^{\rho_-}_{o,x}$ and $G^{\rho_+}_{o,x}$ are functions of the weights in \eqref{w} and the stationary weights in \eqref{bf5}. This implies that on $(E_1\cap E_2)^c$
\begin{equation}
	C_p(\pi^{\rho_+}_{o,x},\pi^{\rho_-}_{o,x})\leq \cH\cup \cV \leq \cI^{s_r,t_r} \quad \forall x\in \cC^{s_r/2,t_r},
\end{equation}
which implies the result.
 \end{proof}

Thus it remains to find an upper bound for $\P(E_2)$. For $m>0$,  let us define
\begin{equation}
	\cA^m=\big\{B^{\rho_+}_{v^c+i\mathrm{e}_1,v^c+(i+1)\mathrm{e}_1}=B^{\rho_-}_{v^c+i\mathrm{e}_1,v^c+(i+1)\mathrm{e}_1}, 0\leq i \leq m\big\}. 
\end{equation}
Recall that $\rho_+=1/2+rN^{-1/3}$. In~\cite{BBS20} the following bound is proven.
\begin{lemma}[Lemma~5.9 of~\cite{BBS20}]
	Let $m\geq1$. For $0<\theta<\rho_+$, it holds
\begin{equation}
	\begin{aligned}\label{tub}
	\P(\cA^m)&\geq 1- \frac{2rN^{-1/3}}{\frac12+rN^{-1/3}}\\
&+\frac{\frac12-rN^{-1/3}}{\frac12+rN^{-1/3}}\Bigg[1+\frac{2r\theta N^{-1/3}+\theta^2}{\frac14-(r^2N^{-2/3}+2rN^{-1/3}\theta+\theta^2)}\Bigg]^{m}\frac{1}{1+2\theta r^{-1}N^{1/3}}.
	\end{aligned}
\end{equation}
\end{lemma}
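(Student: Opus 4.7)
To lower bound $\P(\cA^m)$ we would use the queueing coupling of the two stationary LPP models of densities $\rho_+$ and $\rho_-$ introduced in~\cite{FS18}, which realizes both processes on the same probability space with common bulk weights $\{\omega_x\}$ and gives an explicit joint description of the horizontal increment sequences $\{B^{\rho_+}_{v^c+i\mathrm{e}_1,v^c+(i+1)\mathrm{e}_1}\}_{i\geq 0}$ and $\{B^{\rho_-}_{v^c+i\mathrm{e}_1,v^c+(i+1)\mathrm{e}_1}\}_{i\geq 0}$. In that coupling the two increment sequences can be read off as coupled inter-departure sequences of a single-server queue: the indices $i$ at which the two increments disagree are precisely the service times of a ``second-class'' particle whose arrival stream has rate $\rho_+-\rho_-$.

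Step 1: Translate $\cA^m$ into the event that no second-class service occurs in the first $m$ horizontal steps from $v^c$. Equivalently, letting $T=\inf\{i\geq 0:B^{\rho_+}_{v^c+i\mathrm{e}_1,v^c+(i+1)\mathrm{e}_1}\neq B^{\rho_-}_{v^c+i\mathrm{e}_1,v^c+(i+1)\mathrm{e}_1}\}$, one has $\cA^m=\{T>m\}$, and $T$ is a first-passage time for a one-dimensional random walk whose increments are differences of independent exponentials of rates $\rho_+$ and $1-\rho_-$ and which is started from a stationary exponential initial state with parameter proportional to $\rho_+-\rho_-$. Step 2: Peel off the zeroth horizontal step. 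By memorylessness, the probability that $B^{\rho_+}$ and $B^{\rho_-}$ already disagree at $i=0$ is $(\rho_+-\rho_-)/\rho_+$, giving the additive correction $1-A=\rho_-/\rho_+$; conditionally on equality at $i=0$, the workload process is distributed as a stationary exponential, explaining the factor $B=\rho_-/\rho_+$ in front of the tail term.

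Step 3: Bound the tail $\P(T\leq m\mid\text{coupling survives the $0$-th step})$ by a Chernoff estimate. For any $\theta\in(0,\rho_+)$, an exponential Tchebychev inequality applied to the workload random walk produces an upper bound equal to the $m$-th power of the one-step moment generating function times the MGF of the stationary exponential initial condition. A direct computation of these two MGFs against an exponential of rate $\rho_-(1-\rho_+)=1/4-r^2N^{-2/3}$ yields respectively the factor
\[
1+\frac{2r\theta N^{-1/3}+\theta^2}{1/4-(r^2N^{-2/3}+2rN^{-1/3}\theta+\theta^2)},
\]
which is finite precisely for $\theta\in(0,1-\rho_+)$, and the initial factor $(1+2\theta r^{-1}N^{1/3})^{-1}$, where $2rN^{-1/3}=\rho_+-\rho_-$ is the rate of the initial state. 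Leaving $\theta$ free allows it to be optimized later at the application to Theorem~\ref{thm:coal}. Assembling Steps 1--3 yields~\eqref{tub}.

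The main obstacle is Step 1: setting up the~\cite{FS18} coupling precisely enough that one may identify $T$ with a clean one-dimensional first-passage time of a random walk with iid exponential increments, and determine the exponential rate of its stationary initial condition. Once that identification is in place, the rest is a textbook Chernoff calculation with explicit exponential moment generating functions, and the formula falls out by direct substitution.
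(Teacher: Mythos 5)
First, a point of comparison: the paper does not prove this lemma at all --- it is imported verbatim as Lemma~5.9 of~\cite{BBS20} --- so there is no internal proof to measure your attempt against. Your overall architecture (the Fan--Sepp\"al\"ainen coupling, ``increments disagree exactly when the server idles'', the stationary waiting-time law \eqref{des}, an exponential Chebyshev bound) is indeed the architecture of the cited proof, and the paper itself assembles precisely this toolkit in Section~\ref{SectUpperBound} (the queueing map $D$, the law \eqref{des}, and the cumulative idle-time identity \eqref{sume}), albeit for a different purpose.

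That said, there are concrete errors in your plan. (1) The rates are misidentified. In the coupling $(I^{\rho_+},I^{\rho_-})\sim\nu^{1-\rho_+,1-\rho_-}$ the arrivals have rate $1-\rho_+=\rho_-$ and the services rate $1-\rho_-=\rho_+$; your walk with increments ``$\mathrm{Exp}(\rho_+)-\mathrm{Exp}(1-\rho_-)$'' is, for the symmetric densities $\rho_\pm=\tfrac12\pm rN^{-1/3}$, a difference of two $\mathrm{Exp}(\rho_+)$ variables and hence driftless. Its one-step MGF cannot reproduce the bracket in \eqref{tub}, which equals $\frac{\rho_+\rho_-}{(\rho_--\theta)(\rho_++\theta)}=\E e^{\theta a}\,\E e^{-\theta s}$ with $a\sim\mathrm{Exp}(\rho_-)$, $s\sim\mathrm{Exp}(\rho_+)$; the small positive drift $\frac{1}{1-\rho_+}-\frac{1}{1-\rho_-}\asymp rN^{-1/3}$ is exactly what makes the bracket exceed $1$ and sets the time scale $r^{-2}N^{2/3}$ exploited in Corollary~\ref{cor:co}. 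Relatedly, $\rho_-(1-\rho_+)$ equals $\rho_-^2$, not $\tfrac14-r^2N^{-2/3}$; the latter is $\rho_+\rho_-=(1-\rho_+)(1-\rho_-)$. (2) Step~1 is not made operational. The identity you actually need is \eqref{sume}, which converts $\cA^m$ into $\{w_{-1}\ge\max_{0\le i\le m}(-S^0_i)\}$ with $w_{-1}$ distributed as \eqref{des} and independent of the walk; a pointwise Chernoff bound at time $m$ does not control this running maximum, so you must invoke Doob's maximal inequality for the submartingale $e^{-\theta S^0_i}$ (or sum over the first-passage index). Your assertion that ``conditionally on equality at $i=0$ the workload is again stationary'' is unjustified and is not where the prefactor $\frac{1-\rho_+}{1-\rho_-}$ comes from: it is the weight $\beta/\alpha$ of the exponential part of \eqref{des}. (3) Your route produces the bound in the form $1-A-B[\cdots]^m C$; be aware that the ``$+$'' in the displayed \eqref{tub} cannot be correct as written (the bracket exceeds $1$, so the right-hand side would eventually exceed $1$), and the derivation of Corollary~\ref{cor:co} uses the subtractive form --- so you should aim at the corrected statement, not at \eqref{tub} literally.
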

\begin{corollary}\label{cor:co}
	There exists $C>0$, such that for every $r>0$ and $0<\eta<1/4000$, 
	\begin{equation}\label{aa}
		\P(\cA^{\eta r^{-2}N^{2/3}})\geq 1-C\eta^{1/2}.
	\end{equation}
for all $N$ large enough.
\end{corollary}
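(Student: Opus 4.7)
We apply the lemma above with $m = \eta r^{-2} N^{2/3}$ and a value of the free parameter $\theta$ to be chosen. The natural scaling, matching the two $\theta$-dependent factors in \eqref{tub}, is $\theta = \alpha r N^{-1/3}$ with $\alpha>0$ a constant to be optimized in terms of $\eta$.

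With this substitution the factor $1/(1+2\theta r^{-1}N^{1/3})$ equals exactly $1/(1+2\alpha)$, while in the regime $r\leq N^{1/3}/\log N$ (so that $rN^{-1/3}=o(1)$) a Taylor expansion of the fraction inside the bracket converts the bracketed quantity raised to the $m$-th power into $\exp\bigl(4\eta\alpha(2+\alpha)\bigr)(1+o(1))$. Writing $X := 2rN^{-1/3}/(\tfrac12+rN^{-1/3})$, so that the prefactor $(\tfrac12-rN^{-1/3})/(\tfrac12+rN^{-1/3})$ equals $1-X$, and noting $X\leq 4/\log N=o(1)$ uniformly in $r$ in the allowed range, the conclusion of the lemma simplifies for $N$ large to
\[
\P(\cA^m) \geq 1 - X + (1-X)\,\frac{e^{4\eta\alpha(2+\alpha)}}{1+2\alpha}(1+o(1)).
\]

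The plan is now to optimize the function $F(\alpha) := e^{4\eta\alpha(2+\alpha)}/(1+2\alpha)$ over $\alpha>0$. Setting its logarithmic derivative to zero gives the equation $4\eta(1+\alpha)(1+2\alpha)=1$; for $\eta$ small this has solution $\alpha^\ast = 1/\sqrt{8\eta}\,(1+o(1))$, at which $F(\alpha^\ast) = \sqrt{e\eta/2}\,(1+o(1))$. The assumption $\eta<1/4000$ ensures $\alpha^\ast$ is sufficiently large for this asymptotic to be well-controlled. Substituting $\alpha=\alpha^\ast$ into the displayed inequality and using $X=o(1)$, one extracts a bound of the form $\P(\cA^m)\geq 1-C\eta^{1/2}$ for an appropriate universal constant $C$ and all $N$ large enough.

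The main computational subtlety lies in controlling the $o(1)$ error terms: both the Taylor remainders in $rN^{-1/3}$ (which sit inside the $m$-th power and are amplified by $m=\eta r^{-2}N^{2/3}$) and the error from approximating the true minimizer by $1/\sqrt{8\eta}$ must be shown to remain smaller than $\sqrt{\eta}$. The constraints $r\leq N^{1/3}/\log N$ and $\eta<1/4000$ provide enough margin for this bookkeeping, which is the heart of the computation.
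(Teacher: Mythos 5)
Your proposal takes exactly the paper's route: the paper likewise substitutes $\theta=\eta^{-1/2}rN^{-1/3}$ (i.e.\ your $\alpha=\eta^{-1/2}$) into \eqref{tub}, lets $N\to\infty$ so that the bracket tends to $e^{4+8\sqrt{\eta}}$ and the last factor contributes the $\sqrt{\eta}$, and then checks that the resulting expression is at most $C\sqrt{\eta}$ with $C=62$ for $\eta<1/4000$. Your optimization of $\alpha$ (giving $\alpha^\ast\sim 1/\sqrt{8\eta}$ and, after correcting a factor of $2$, $F(\alpha^\ast)=\sqrt{2e\eta}\,(1+o(1))$ rather than $\sqrt{e\eta/2}$) only improves the unimportant constant $C$; the argument is otherwise the same.
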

	\begin{proof}
		We set $\theta=\eta^{-1/2}rN^{-1/3}$ and plug this into \eqref{tub}. Taking $N\to\infty$ we obtain
		\begin{equation}
			\lim_{N\to\infty}\P(\cA^{\eta r^{-2}N^{2/3}})\geq 1- \frac{e^{4+8\sqrt{\eta}}\sqrt{\eta}}{1+2\sqrt{\eta}}.
		\end{equation}
    Taking for instance $C=62$, then for all $0<\eta<1/4000$, we have $1\geq C \sqrt{\eta}\geq \frac{e^{4+8\sqrt{\eta}}\sqrt{\eta}}{1+2\sqrt{\eta}}$, which implies the result. 
	\end{proof}
We are now able to bound the probability of $E_2$.
\begin{corollary}\label{cor:e2}
		There exists $C>0$, such that for every $r>0$ and $0<s_r<1/(2000 r^2)$ 
	\begin{equation}
	\P(E_2)\leq Cs_r^{1/2} r
	\end{equation}
for all $N$ large enough.
\end{corollary}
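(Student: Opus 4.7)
The plan for the proof of Corollary~\ref{cor:e2} is to reduce the event $E_2$, which concerns the $s_r N^{2/3}$ boundary edges in $\cE^B_{v^c}$, to a translated version of the event controlled by Corollary~\ref{cor:co}, and then apply that corollary with an appropriate choice of $\eta$.

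\textbf{First step (symmetry reduction).} I will split $\cE^B_{v^c} = \cE^H_{v^c} \cup \cE^V_{v^c}$, each of cardinality $(s_r/2)N^{2/3}$, and use a union bound to get
\begin{equation*}
\P(E_2) \leq \P\bigl(\exists e \in \cE^H_{v^c}: B^{\rho_+}_e = B^{\rho_-}_e\bigr) + \P\bigl(\exists e \in \cE^V_{v^c}: B^{\rho_+}_e = B^{\rho_-}_e\bigr).
\end{equation*}
By the reflection symmetry of the LPP across the main diagonal (which swaps $\mathrm{e}_1 \leftrightarrow \mathrm{e}_2$ and hence $\rho_+ \leftrightarrow \rho_-$), the two terms are equal, so it suffices to bound one.

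\textbf{Second step (translation to the axis).} I will use the Burke-type stationarity of the coupled field $(B^{\rho_+}, B^{\rho_-})$ constructed in~\cite{FS18}: along any down-right path, the joint law of the coupled increments depends only on the shape of the path. Combining this with the diagonal reflection of the first step, the joint distribution of $(B^{\rho_+}_e, B^{\rho_-}_e)$ over the edges of $\cE^H_{v^c}$ agrees with the joint distribution over the first $(s_r/2)N^{2/3}$ horizontal edges emanating from $v^c$ along the $\mathrm{e}_1$-axis, that is, exactly the edges appearing in $\cA^{(s_r/2)N^{2/3}}$ from Corollary~\ref{cor:co}. In particular, the probability of the corresponding event on $\cE^H_{v^c}$ is controlled by $\P(\cA^{(s_r/2)N^{2/3}})$ (up to sign conventions in the coupling).

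\textbf{Third step (quantitative input).} Set $\eta = s_r r^2 / 2$. The hypothesis $s_r < 1/(2000 r^2)$ gives $\eta < 1/4000$, so Corollary~\ref{cor:co} applies with $m = \eta r^{-2} N^{2/3} = (s_r/2)N^{2/3}$ and yields
\begin{equation*}
\P(\cA^{(s_r/2)N^{2/3}}) \geq 1 - C \eta^{1/2} = 1 - C (s_r r^2/2)^{1/2} = 1 - C' s_r^{1/2} r
\end{equation*}
for $N$ large enough. Combined with the symmetric split of the first step and the identification of the second step, this gives $\P(E_2) \leq 2 C' s_r^{1/2} r$, which is of the desired form $C s_r^{1/2} r$.

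\textbf{Main obstacle.} The delicate point is the second step: verifying that the joint law of the coupled fields $(B^{\rho_+}, B^{\rho_-})$ on the L-shaped collection of entry edges $\cE^B_{v^c}$ is indeed distributionally equivalent, modulo the diagonal reflection, to the joint law on a contiguous block of axis edges through $v^c$. This requires tracing through the queueing construction of the \cite{FS18} coupling and confirming that it enjoys the appropriate stationarity under translations of down-right paths; once this identification is in place, the quantitative estimate is immediate from Corollary~\ref{cor:co}.
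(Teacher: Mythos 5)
Your proposal is essentially the paper's own proof: split $E_2$ over $\cE^H_{v^c}$ and $\cE^V_{v^c}$, invoke the diagonal symmetry to treat only one of them, identify the law of the coupled increments there with that of the axis increments appearing in $\cA^m$ (the paper simply asserts the identity $\P\big((E^H_2)^c\big)=\P(\cA^{\eta r^{-2}N^{2/3}})$, while you correctly flag it as the step needing the stationarity of the \cite{FS18} coupling), and apply Corollary~\ref{cor:co} with $m=\tfrac{s_r}{2}N^{2/3}$, i.e.\ $\eta=s_rr^2/2<1/4000$. One caveat: Corollary~\ref{cor:co} bounds the probability of the \emph{complement} of $\cA^m$, i.e.\ of the event that some pair of increments \emph{differs}, so the events in your first step must read $B^{\rho_+}_e\neq B^{\rho_-}_e$ rather than $B^{\rho_+}_e= B^{\rho_-}_e$ (as written, each of your two events has probability close to $1$); this is consistent with the paper's proof, where $E^H_2$ and $E^V_2$ are defined with $\neq$, the ``$=$'' in the displayed definition of $E_2$ being a typo.
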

\begin{proof}
	Define
\begin{equation}
	\begin{aligned}
		E^H_2&=\{\exists e\in\cE_{v^c}^H : B^{\rho_+}_e\neq B^{\rho_-}_e \},\\
		E^V_2&=\{\exists e\in\cE_{v^c}^V : B^{\rho_+}_e\neq B^{\rho_-}_e \},
	\end{aligned}
\end{equation}
	and note that
	\begin{equation}
		E_2=E^H_2\cup E^V_2.
	\end{equation}
	By the symmetry of the problem, it is enough to show
	\begin{equation}
			\P(E^H_2)\leq Cs_r^{1/2} r.
	\end{equation}
	Apply Corollary \ref{cor:co} with  $m=\frac{s_r}{2}N^{2/3}$ i.e.\  with $\eta=\frac{s_r r^2}{2}$. Then $\P\big((E^H_2)^c\big)=\P(\cA^{\eta r^{-2}N^{2/3}})$ and \eqref{aa} gives the claimed result.
\end{proof}

\begin{corollary}
Consider the parameters satisfying Assumption~\ref{ass} and $s_r r^2<1/4000$.  Then, there exist constants $c,C>0$ such that
\begin{equation}\label{eq5.28}
		\P\Big(C_p(\pi^{1/2}_{o,x},\pi_{y,x})\leq L_{1-t_r} \quad \forall x\in \cC^{s_r/2,t_r},y\in\cR^{r/2,1/4}\Big) \geq 1-Ce^{-cM^3}-e^{-cr^3}-Cs_r^{1/2}r
	\end{equation}
for all $N$ large enough.
\end{corollary}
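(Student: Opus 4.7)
The plan is to chain together Lemma~\ref{cor:cub}, Lemma~\ref{lem:e1e2}, and Corollary~\ref{cor:e2}, with no new estimate required beyond verifying parameter compatibility. Under Assumption~\ref{ass}, Lemma~\ref{cor:cub} already provides a lower bound of the form
\begin{equation*}
1-Ce^{-cM^3}-2e^{-cr^3}-\P\Bigl(\exists\, x\in \cC^{s_r/2,t_r}: C_p(\pi^{\rho_+}_{o,x},\pi^{\rho_-}_{o,x})\geq \cI^{s_r,t_r},\ \cB\Bigr),
\end{equation*}
so the task reduces to controlling this residual probability.

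The key reduction is Lemma~\ref{lem:e1e2}, which states that the event above is contained in $E_1\cap E_2$, and hence has probability at most $\P(E_2)$. The extra hypothesis $s_r r^2<1/4000$ in the statement is strictly stronger than the condition $s_r<1/(2000 r^2)$ demanded by Corollary~\ref{cor:e2}, so the latter applies and yields $\P(E_2)\leq Cs_r^{1/2}r$ for all $N$ large enough. Inserting this into the bound above, and absorbing the factor $2$ in $2e^{-cr^3}$ into a slightly smaller decay constant, produces exactly the claimed inequality \eqref{eq5.28}.

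I anticipate no serious obstacle: the substantive analytic work has already been done upstream — the sandwiching of the point-to-point and general-initial-condition geodesics between $\pi^{\rho_-}_{o,x}$ and $\pi^{\rho_+}_{o,x}$ via the Cator--Pimentel comparison (Lemma~\ref{cor:og}), the control of where the stationary geodesics cross $L_{1-t_r}$ (Lemmas~\ref{lem:cyl} and~\ref{lem:cyl2}), and the queueing coupling of~\cite{FS18} that makes $\P(E_2)$ explicit through Corollary~\ref{cor:co}. The only point to verify is consistency of the parameter windows, which is immediate: Assumption~\ref{ass} regulates $M, r, s_r, t_r$ for the localization inputs, and the tighter $s_r r^2<1/4000$ ensures that $\eta=s_r r^2/2$ falls in the range $(0,1/4000)$ where the explicit bound $1-C\eta^{1/2}$ of Corollary~\ref{cor:co} is valid. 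Assembling these pieces gives the result.
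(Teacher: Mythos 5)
Your proposal is correct and follows exactly the paper's proof: reduce via Lemma~\ref{cor:cub} to the residual coalescence event, bound it by $\P(E_2)$ using Lemma~\ref{lem:e1e2}, and then apply Corollary~\ref{cor:e2} (whose hypothesis $s_r<1/(2000\,r^2)$ is implied by $s_r r^2<1/4000$). The parameter check and the absorption of the factor $2$ into the constants are exactly the right (and only) remaining points.
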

\begin{proof}
	By Lemma~\ref{lem:e1e2}
	\begin{equation}
		\P\Big(\exists x\in \cC^{s_r/2,t_r}: C_p(\pi^{\rho_+}_{o,x},\pi^{\rho_-}_{o,x})\geq \cI^{s_r,t_r}, \cB \Big) \leq \P(E_2).
	\end{equation}
	The result follows from Corollary~\ref{cor:e2} and Lemma~\ref{cor:cub}.
\end{proof}
\begin{proof}[Proof of Theorem~\ref{thm:coal}]
	To end the proof of Theorem~\ref{thm:coal}, we just need to express the parameters $r,s_r,t_r,M$ in terms of $\delta$ so that Assumption~\ref{ass} is in force. For a small $\delta>0$ consider the scaling
	\begin{equation}\label{eq4.31}
	\begin{aligned}
	s_r&=2\delta,\\
	t_r&=\delta^{3/2}/(\log(1/\delta))^3,\\
	r&=\tfrac14\log(1/\delta),\\
	M&=\tfrac14\log(1/\delta).
	\end{aligned}
	\end{equation}
	Let us verify the assumptions. For all $\delta\leq 0.05$, the last inequality in \eqref{eq3.67} holds true and also $s_r<r$. Finally, for $\delta\leq \exp(-4 M_0)$, we have $M\geq M_0$ as well. For small $\delta$, the largest error term in \eqref{eq5.28} is $C s_r^{1/2} r$, which however goes to $0$ as $\delta$ goes to $0$. 
\end{proof}
\begin{remark}
Of course, \eqref{eq4.31} is not the only possible choice of parameters. For instance, one can take $t_r=\delta^{3/2}/\log(1/\delta)$, but then we have to take smaller values of $r$ and $M$, e.g., $r=M=\tfrac{1}{16}\sqrt{\log(1/\delta)}$, for which the last inequality in \eqref{eq3.67} holds for $\delta\leq 0.01$, but the decay of the estimate $e^{-c r^3}$ and $e^{-c M^3}$ are much slower.
\end{remark}

\subsection{General initial conditions: proof of Theorem~\ref{thm:coalGeneralIC}}
In this section we prove Theorem~\ref{thm:coalGeneralIC}. The strategy of the proof is identical to the one of Theorem~\ref{thm:coal} and thus we will focus only on the differences.
\begin{proof}[Proof of Theorem~\ref{thm:coalGeneralIC}]
	Here we think of the stationary LPP as leaving from the line $\cL=\{x\in\Z^2| x_1+x_2=0\}$ rather than the point $o=(0,0)$. The first ingredient of the proof is a version of Lemma~\ref{lem:sb}, where we adapt Lemma~\ref{lem:sb} so that the statement holds with  $Z^{\rho_\pm}_{\cL,x^k}$    rather than $Z^{\rho_\pm}_{o,x^k}$ \footnote{Since the geodesics have slope very close to $1$, one might expect that $r N^{2/3}$ and $15 r N^{2/3}$ should be replaced by their half. However the statement don't need to be changed since we did not choose these boundary to be sharp.}. Next we use line-to-point versions of \eqref{eq3.44} and \eqref{eq3.48} which indeed can be obtained as  $\{Z^{\rho_+}_{o,x^3}<0\}=\{Z^{\rho_+}_{\cL,x^3}<0\}$ and therefore the bounds from Lemma~\ref{spc} can still be applied.
	
	Lastly, for general initial conditions, the exit point of $Z^{h_0}_{\cL,x}$ is not $0$ anymore. By Assumption~\ref{ass_IC} it is  between $-r N^{2/3}$ and $r N^{2/3}$ with $r=\log(\delta^{-1})$ with probability at least $1-Q(\delta)$. Thus we replace the bound $1-e^{-c r^3}$ for $r=\log(\delta^{-1})$ with $1-Q(\delta)$. The rest of the proof is unchanged.
\end{proof}
\section{Upper bound for the probability of no coalescence}\label{SectUpperBound}
In this section we will prove Theorem~\ref{thm:LBcoal}, but for this we need some preparations.

\subsection{Coupling of stationary models with distinct densities}
Let $\arrv=(\arr_j)_{j\in\Z}$ and $\servv=(\serv_j)_{j\in\Z}$ be two independent sequences of i.i.d.\ exponential random variables of intensity $\beta$ and $\alpha$ respectively, where $0<\beta<\alpha<1$. We think of $a_j$ as the inter-arrival time between customer $j$ and customer $j-1$, and of $s_j$ as the service time of customer $j$. The waiting time of the $j$'th customer is given by
\begin{equation}\label{waittime}
w_j=\sup_{i\leq j}\Big(\sum_{k=i}^{j}s_{k-1}-a_k\Big)^+.
\end{equation}
The distribution of $w_0$ (and by stationarity the distribution of any $w_j$ for $j\in \Z$) is given by
\begin{equation}\label{des}
\P(w_0\in dw):=f(dw)=\big(1-\frac{\beta}{\alpha}\big)\delta_0(dw)+\frac{(\alpha-\beta)\beta}{\alpha}e^{-(\alpha-\beta)w}dw.
\end{equation}
The queueing map $D:\R_+^\Z\times \R_+^\Z\rightarrow\R_+^\Z$ takes the sequence of interarrival times and the service times and maps them to the inter-departure times
\begin{equation}
\begin{aligned}
\depav&=D(\arrv,\servv),\\
d_j&=(w_{j-1}+s_{j-1}-a_j)^-+s_j.
\end{aligned}
\end{equation}
We denote by $\nu^{\beta,\alpha}$ the distribution of $(D(\arrv,\servv),\servv)$ on $\R_+^\Z\times \R_+^\Z$, that is
\begin{equation}\label{qd}
\nu^{\beta,\alpha}\sim (D(\arrv,\servv),\servv).
\end{equation}
By Burke's Theorem~\cite{Bur56} $D(\arrv,\servv)$ is a sequence of i.i.d.\ exponential random variables of intensity $\beta$, consequently, the measure $	\nu^{\beta,\alpha}$ is referred to as a stationary measure of the queue.
One can write
\begin{equation}\label{eqEjs}
d_j=e_j+s_{j},
\end{equation}
where $e_j$ is called the $j$'th \textit{idle time} and is given by
\begin{equation}\label{e}
e_j=(w_{j-1}+s_{j-1}-a_j)^-.
\end{equation}
$e_j$ is the time between the departure of customer $j-1$ and the arrival of customer $j$ in which the sever is idle. Define
\begin{align}
x_j=s_{j-1}-a_j,
\end{align}
and the summation operator
\begin{equation}\label{S}
S^k_l=\sum_{i=k}^lx_i.
\end{equation}
Summing $e_j$ we obtain the cumulative idle time (see Chapter 9.2, Eq. 2.7 of~\cite{WhittBook}).
\begin{lemma}[Lemma A1 of~\cite{BBS20}]
	For any $k\leq l$
	\begin{equation}\label{sume}
	\sum_{i=k}^l e_i=\Big(\inf_{k\leq i\leq l}w_{k-1}+S^k_i\Big)^-.
	\end{equation}
\end{lemma}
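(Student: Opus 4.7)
The identity is the standard Skorokhod-reflection formula expressing the cumulative idle time of a single-server queue in terms of the driving random walk, and my plan is to derive it from the Lindley recursion by a short induction.

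First I would rewrite the definition \eqref{waittime} in the equivalent Lindley form $w_j = (w_{j-1}+x_j)^+$, where $x_j = s_{j-1}-a_j$ as in the line preceding \eqref{S}. Combined with the definition $e_j = (w_{j-1}+x_j)^-$ and the elementary identity $y^+ - y^- = y$ (with the convention $y^- = \max(-y,0)$ used by the authors), this gives the balance equation $w_j - w_{j-1} = x_j + e_j$. Summing this telescoping relation from $j=k$ up to $j=l$ yields
\begin{equation*}
w_l \;=\; w_{k-1} + S^k_l + \sum_{i=k}^l e_i,
\end{equation*}
so with $E_j := \sum_{i=k}^j e_i$ (and $E_{k-1}=0$) one has $(w_{k-1}+S^k_j) + E_j = w_j \ge 0$ for every $j\ge k-1$.

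Next I would observe that $E_j$ is the minimal non-decreasing correction that forces the path $j\mapsto w_{k-1}+S^k_j$ above zero: $E$ is non-decreasing by construction, and its increment at step $j$ is strictly positive exactly when $w_{j-1}+x_j<0$, which in turn forces $w_j = (w_{j-1}+x_j)^+ = 0$. This is precisely the discrete Skorokhod-reflection condition, whose unique solution is
\begin{equation*}
E_l \;=\; \Big(\min_{k-1\le i\le l}(w_{k-1}+S^k_i)\Big)^{\!-} \;=\; \Big(\min_{k\le i\le l}(w_{k-1}+S^k_i)\Big)^{\!-},
\end{equation*}
where the second equality uses $w_{k-1}+S^k_{k-1} = w_{k-1}\ge 0$, so removing $i=k-1$ from the minimum does not affect the negative part. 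This is exactly \eqref{sume}.

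If one prefers a self-contained derivation rather than quoting the Skorokhod lemma, the same conclusion is reached by induction on $l$: the base case $l=k$ is immediate from $e_k = (w_{k-1}+x_k)^-$ with the minimum over a singleton, and for the step I would split on the sign of $w_{l-1}+x_l$, using the balance $w_{l-1} = w_{k-1}+S^k_{l-1}+E_{l-1}$ to check that adding $e_l$ to $(\min_{k\le i\le l-1}(w_{k-1}+S^k_i))^-$ yields the same expression with $l-1$ replaced by $l$. The only potential source of friction is sign-convention bookkeeping (the $^-$ denotes the positive quantity $\max(-y,0)$) and the indexing of partial sums; there is no real analytic obstacle.
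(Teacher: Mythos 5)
Your argument is correct. Note that the paper itself gives no proof of this statement: it is imported verbatim as Lemma A1 of \cite{BBS20} (with a pointer to Chapter 9.2 of \cite{WhittBook}), and the standard route you take --- Lindley recursion $w_j=(w_{j-1}+x_j)^+$, the balance identity $w_j-w_{j-1}=x_j+e_j$ obtained from $y=y^+-y^-$, telescoping, and then the discrete Skorokhod reflection characterization of the minimal regulator (together with the observation that $w_{k-1}+S^k_{k-1}=w_{k-1}\ge 0$ lets you drop the index $i=k-1$ from the minimum) --- is exactly the derivation behind the cited result. The sign conventions and indexing are handled consistently, so there is nothing to fix.
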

It has long been known that the LPP on the lattice can be seen as queues in tandem. In particular, the stationary distribution for LPP can be seen as a stationary distribution of queues in tandem. In \cite{FS18} Fan an Sepp\"al\"ainen found the multi-species stationary distribution for LPP. For $0<\beta<\alpha<1$, let $I^\beta=\{I^\beta_i\}_{i\in\Z}$ and $I^\alpha=\{I^\alpha_i\}_{i\in\Z}$ be two i.i.d.\ random sequences such that
\begin{equation}\label{wg}
I^\beta_1\sim \text{Exp}(1-\beta)\textrm{ and }
I^\alpha_1\sim \text{Exp}(1-\alpha).
\end{equation}

Let $x\in \Z$ such that $o=(0,0)\leq N\mathrm{e}_4+x \mathrm{e}_1$. Let $G^\alpha,G^\beta$ be stationary LPP as in Section~\ref{SectStatLPP} with the weights in \eqref{wg}. Define the sequences $I^{\beta,x}$ and $I^{\alpha,x}$ by
\begin{equation}
\begin{aligned}
I^{\beta,x}_i&=G^\beta_{(1-t_r)N\mathrm{e}_4+i\mathrm{e}_1}-G^\beta_{(1-t_r)N\mathrm{e}_4+(i-1)\mathrm{e}_1} \quad\textrm{for }i>x,\\
I^{\alpha,x}_i&=G^\alpha_{(1-t_r)N\mathrm{e}_4+i\mathrm{e}_1}-G^\alpha_{(1-t_r)N\mathrm{e}_4+(i-1)\mathrm{e}_1} \quad\textrm{for }i>x.
\end{aligned}
\end{equation}
The multi-species results in \cite{FS18}, in particular Theorem~2.3 of~\cite{FS18}, show that if we take $(I^{\alpha},I^{\beta})\sim \nu^{1-\alpha,1-\beta}$ then
\begin{align}
(I^{\alpha,x},I^{\beta,x})\sim \nu^{1-\alpha,1-\beta}|_{x+\R^{\Z_+}}.
\end{align}
where $\nu^{1-\alpha,1-\beta}|_{x+\R^{\Z_+}}$ is the restriction of $\nu^{\beta,\alpha}$ to $x+\R^{\Z_+}$.

\subsection{Control on the stationary geodesics at the $(1-t_r)N$.}
As main ingredient in the proof, in Proposition \ref{prop:Lb} below, we show that with positive probability the geodesics ending at $N\mathrm{e}_4$ for the stationary models with density $\rho_+$ and $\rho_-$ do not coalesce before time $(1-t_r)N$.

Let $r_0>0$ to be determined later, $z_0=-r_0t_r^{2/3} N^{2/3}$, $z_1=r_0t_r^{2/3} N^{2/3}$, and define
\begin{equation}\label{ex}
\cH^{\rho}=\sup\{i\in\Z |(1-t_r)N\mathrm{e}_4+(i,0)\in \pi^\rho_{o,N\mathrm{e}_4}\}
\end{equation}
be the exit point of the geodesic $\pi^\rho_{o,N\mathrm{e}_4}$ with respect to the horizontal line $(\Z,(1-t_r)N)$, which geometrically is at position $\widetilde \cH^\rho=(1-t_r)N\mathrm{e}_4+(\cH^\rho,0)$.
Define
\begin{equation}
I= I_-\cup I_+\textrm{ where } I_-=\{z_0,\ldots,0\}, I_+=\{1,\ldots,z_1\}.
\end{equation}

\begin{proposition}\label{prop:Lb}
	Under the choice of parameters in \eqref{eq4.31}, there exist $C,\delta_0>0$ such that for $\delta<\delta_0$ and $N$ large enough
	\begin{equation}
	\P(\cH^{{\rho_-}}\in I_-, \cH^{{\rho_+}}>0 )\geq C\delta^{1/2}.
	\end{equation}
\end{proposition}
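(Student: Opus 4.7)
The plan is to analyze $(\cH^{\rho_-}, \cH^{\rho_+})$ through the Fan--Seppäläinen two-class queueing coupling applied along the horizontal line $L = \{y = (1-t_r)N\}$. By Burke's theorem, the horizontal increments $I^{\rho}_i := G^{\rho}_{o, v_i} - G^{\rho}_{o, v_{i-1}}$ along $L$ are marginally i.i.d.\ $\mathrm{Exp}(1-\rho)$; under the two-class stationary coupling $\nu^{1-\rho_+, 1-\rho_-}$ the differences $e_i := I^{\rho_+}_i - I^{\rho_-}_i \geq 0$ are the non-negative idle times. Writing $\tilde F^{\rho}(i) := \sum_{k=1}^i I^{\rho}_k + G_{u_i, N\mathrm{e}_4}$ with the bulk term shared between the two models, we have $\cH^{\rho} = \argmax_i \tilde F^{\rho}(i)$ and $\tilde F^{\rho_+}(i) - \tilde F^{\rho_-}(i) = W(i) := \sum_{k=1}^i e_k$ is non-decreasing in $i$ (with the convention $W(i) = -\sum_{k=i+1}^0 e_k$ for $i < 0$). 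Adding a non-decreasing function to an argmax can only shift it to the right, so $\cH^{\rho_+} \geq \cH^{\rho_-}$ almost surely, yielding
\begin{equation*}
\P(\cH^{\rho_-} \leq 0 < \cH^{\rho_+}) = \P(\cH^{\rho_+} > 0) - \P(\cH^{\rho_-} > 0).
\end{equation*}

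The core step is to lower-bound this probability difference by $C\delta^{1/2}$. Via Burke's theorem, $\cH^{\rho}$ is distributed as the horizontal exit point of the stationary LPP in the slab from $(1-t_r)N\mathrm{e}_4$ to $N\mathrm{e}_4$ with boundary density $\rho$; this slab has size $t_rN$ in each direction and characteristic density $1/2$. Under the parameter scaling $r = \tfrac14\log(\delta^{-1})$, $t_r = \delta^{3/2}/(\log\delta^{-1})^3$, one has $|\rho_\pm - \tfrac12| = rN^{-1/3} \leq (t_rN)^{-1/3}$, so Lemma~\ref{spc} applies. A characteristic-direction computation places the typical location of $\cH^{\rho_\pm}$ at $\pm \Theta(rt_rN^{2/3})$ with KPZ fluctuations on the scale $(t_rN)^{2/3}$, so the ratio $rt_r^{1/3} = \delta^{1/2}/4$ acts as the signal-to-noise ratio for the events $\{\cH^{\rho_\pm} > 0\}$. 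Converting this into the probability lower bound requires a non-degeneracy (lower density) estimate for the argmax distribution at the fluctuation scale. This will be extracted from the Fan--Seppäläinen coupling combined with Lemma~\ref{Lem:Comparison}: comparing the stationary LPP with endpoint $N\mathrm{e}_4$ to LPPs with endpoints shifted by $\Theta(rt_rN^{2/3})$ in the $\mathrm{e}_3$-direction shifts the typical exit point by the same amount, and an averaging argument over such shifted endpoints, combined with the tail estimates of Lemma~\ref{spc}, guarantees that $\cH^{\rho_-}$ assigns probability of order $\delta^{1/2}$ to suitable windows of length $\Theta(rt_rN^{2/3})$ in its typical fluctuation region.

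Finally, the event $\{\cH^{\rho_-} \in I_-\}$ is obtained from $\{\cH^{\rho_-} \leq 0\}$ by subtracting the tail event $\{\cH^{\rho_-} < z_0\}$, which has probability at most $Ce^{-cr_0^3}$ by Lemma~\ref{spc}. For $r_0$ sufficiently large this is $o(\delta^{1/2})$, yielding $\P(\cH^{\rho_-} \in I_-, \cH^{\rho_+} > 0) \geq C\delta^{1/2}$. The main obstacle is the non-degeneracy estimate in Step 2: Lemma~\ref{spc} supplies only tail upper bounds, so a matching lower bound on the argmax density at the relevant scale needs an additional input. The natural route is through the Fan--Seppäläinen coupling itself, since the drift $\E[e_i] = \Theta(rN^{-1/3})$ yields $\E[W(\Theta((t_rN)^{2/3}))] = \Theta(rt_r^{2/3}N^{1/3})$, of order $\delta^{1/2}(t_rN)^{1/3}$ relative to the fluctuation scale of $\tilde F^{\rho_-}$; fluctuation estimates for the queue, in the spirit of Lemma~5.9 of~\cite{BBS20}, then deliver the requisite non-degeneracy with the correct $\delta^{1/2}$ scaling.
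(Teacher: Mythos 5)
Your reduction is sound in outline: under the Fan--Sepp\"al\"ainen coupling the horizontal increments along $L_{1-t_r}$ satisfy $I^{\rho_+}_i\geq I^{\rho_-}_i$, so the argmax is a.s.\ monotone, $\cH^{\rho_+}\geq\cH^{\rho_-}$, and the target becomes a lower bound on $\P(\cH^{\rho_+}>0)-\P(\cH^{\rho_-}>0)$ minus a negligible tail; your signal-to-noise identification $rt_r^{1/3}=\delta^{1/2}/4$ is also the right heuristic. But the core quantitative step is not proved, and neither route you sketch can close it. The averaging argument over windows of length $\Theta(rt_rN^{2/3})$ only shows that \emph{some} window in the fluctuation region carries mass $\gtrsim\delta^{1/2}$, not the specific window straddling the origin that the difference of the two probabilities requires; Lemma~\ref{spc} gives only upper tail bounds, and shifting endpoints by translation invariance relocates the distribution without yielding any anti-concentration. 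Likewise, the first-moment computation $\E[W(\Theta((t_rN)^{2/3}))]=\Theta(\delta^{1/2}(t_rN)^{1/3})$ delivers, via Markov, only the \emph{upper} bound $\P(W\gtrsim(t_rN)^{1/3})\lesssim\delta^{1/2}$ (which is what feeds Theorem~\ref{thm:coal}); for the matching lower bound one must rule out that the idle time, when present at all, vastly overshoots its mean --- and it typically does, since the stationary waiting time of the $\nu^{1-\rho_+,1-\rho_-}$ queue is, by \eqref{des}, an atom at $0$ plus an $\mathrm{Exp}(2rN^{-1/3})$ variable of typical size $N^{1/3}/r\gg(t_rN)^{1/3}$.

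The paper's proof supplies exactly this missing ingredient, and it is where the $\delta^{1/2}$ actually comes from: condition on the waiting time $w_{z_0-1}$ at the left edge of the window $I$. The explicit stationary law \eqref{des} gives $\P\big(w_{z_0-1}\leq(z_1-z_0)^{1/2}\big)=1-\tfrac{1/2-rN^{-1/3}}{1/2+rN^{-1/3}}e^{-2rN^{-1/3}(z_1-z_0)^{1/2}}\sim 2^{3/2}r_0^{1/2}\delta^{1/2}$. On that event, the idle-time identity \eqref{sume} converts the increment difference $\sum_k(\hat I^{\rho_+}_k-\hat I^{\rho_-}_k)$ into a functional of three random walks (built from the backward bulk increments $\hat I$, the service times and the arrival times, the first independent of the last two), and Lemmas~\ref{lem:gl}--\ref{lem:Llb2} show that with conditional probability bounded below by a \emph{constant} the accumulated idle time over $I_+$ beats the bulk fluctuations, forcing $\cH^{\rho_+}>0$ while $\cH^{\rho_-}\in I_-$. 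You would also need to control the joint dependence between the boundary increments and the shared bulk term $G_{u_i,N\mathrm{e}_4}$, which the paper handles through this independence structure. In short: right framework and right scaling heuristic, but the $\delta^{1/2}$ lower bound itself --- the heart of the proposition --- remains unproved in your proposal.
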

Before we turn to the proof of Proposition \ref{prop:Lb}, we need some preliminary results. The following lemma shows that with probability close to $1/2$
$\pi^\rho_{o,N\mathrm{e}_4}$ will cross the interval $I$ from its left half.
\begin{lemma}\label{lem:gl}
	Under \eqref{eq4.31}, there exists $\delta_0,c>0$ such that for $\delta<\delta_0$ and for large enough $N$
	\begin{equation}
	\P(\cH^{{\rho_-}}\in I_-)\geq \frac12-e^{-cr_0^3}.
	\end{equation}
\end{lemma}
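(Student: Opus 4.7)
The plan is to write
\[
\P(\cH^{\rho_-}\in I_-)\;\geq\;\P(\cH^{\rho_-}\leq 0)\;-\;\P(\cH^{\rho_-}<z_0),
\]
and to prove separately that $\P(\cH^{\rho_-}\leq 0)\geq\tfrac12$ and $\P(\cH^{\rho_-}<z_0)\leq e^{-cr_0^3}$. The first estimate is the essential new input; the second is a routine assembly of the tools developed in Section~\ref{SectLocalStat}.

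For the $\tfrac12$-bound I would combine a reflection argument with the stochastic monotonicity of the geodesic in the density parameter. Let $T(x_1,x_2)=(x_2,x_1)$ be the reflection across the main diagonal. Under $T$ the bulk weights stay i.i.d.\ $\mathrm{Exp}(1)$ and the boundary rates on the two axes swap, so $\pi^{\rho_-}_{o,N\mathrm{e}_4}$ is mapped in distribution to $\pi^{\rho_+}_{o,N\mathrm{e}_4}$ since $1-\rho_-=\rho_+$. Geometrically, the horizontal line $\{x_2=(1-t_r)N\}$ is mapped to the vertical line $\{x_1=(1-t_r)N\}$, and for any up-right path a short argument using up-rightness shows that the topmost row at column $(1-t_r)N$ is $\leq(1-t_r)N$ if and only if the rightmost column at row $(1-t_r)N$ is $\geq(1-t_r)N$; hence $\P(\cH^{\rho_-}\leq 0)=\P(\cH^{\rho_+}\geq 0)$. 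Independently, the coupling of \cite{FS18} used throughout the paper provides a joint realization of $\pi^{\rho_\pm}_{o,N\mathrm{e}_4}$ with $\pi^{\rho_-}\preceq\pi^{\rho_+}$, which forces $\cH^{\rho_-}\leq\cH^{\rho_+}$ pointwise and hence $\P(\cH^{\rho_+}\leq 0)\leq\P(\cH^{\rho_-}\leq 0)$. Adding the two estimates,
\[
2\P(\cH^{\rho_-}\leq 0)\;\geq\;\P(\cH^{\rho_+}\geq 0)+\P(\cH^{\rho_+}\leq 0)\;\geq\;1.
\]

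For the localization, I would first bound the exit point of $\pi^{\rho_-}_{o,N\mathrm{e}_4}$ from the $\mathrm{e}_2$-axis. Using $\xi_2(\rho_-)-\xi_1(\rho_-)=-4rN^{-1/3}+O((rN^{-1/3})^3)$ one writes $N\mathrm{e}_4=\xi(\rho_-)\cdot 2N-4rN^{2/3}\mathrm{e}_3+O(r^3)$, and after a horizontal shift by $15rN^{2/3}\mathrm{e}_1$ the endpoint takes the form $\xi(\rho_-)\tilde N+\tilde r\tilde N^{2/3}\mathrm{e}_3$ with $\tilde r\asymp r$; Lemma~\ref{spc}\eqref{spc3} applied with $\nu=\rho_-$ then yields $\P(Z^{\rho_-}_{o,N\mathrm{e}_4}<-15rN^{2/3})\leq e^{-cr^3}$, which under \eqref{eq4.31} is much smaller than $e^{-cr_0^3}$. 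On the complementary event the geodesic exits at some $(0,k)$ with $0\leq k\leq 15rN^{2/3}$ and continues as a bulk geodesic to $N\mathrm{e}_4$ whose characteristic crosses $\{x_2=(1-t_r)N\}$ at horizontal position $(1-t_r)N-t_r k+O(k^2/N)$. Under the scaling \eqref{eq4.31} this deterministic shift is of order $rt_rN^{2/3}=(\delta^{1/2}/4)\,t_r^{2/3}N^{2/3}$, negligible compared to $r_0 t_r^{2/3}N^{2/3}$ for $\delta$ small. Finally, Theorem~\ref{thm:locCylinder} together with Remark~\ref{remptptlocal} bounds the transverse fluctuation of this bulk geodesic at distance $t_rN$ from $N\mathrm{e}_4$ by $\tfrac{r_0}{2}(t_rN)^{2/3}$ up to probability $e^{-cr_0^3}$. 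Combining these three estimates with the $\tfrac12$-bound completes the proof. The main obstacle I expect is the reflection step: a careful check that after reflection the events $\{\cH^{\rho_-}\leq 0\}$ and $\{\cH^{\rho_+}\geq 0\}$ are matched correctly (handling the boundary case $\cH=0$) and that the monotonicity coupling is correctly combined to yield the absolute $\tfrac12$-bound; the localization step, while technical, only strings Lemma~\ref{spc} and Theorem~\ref{thm:locCylinder} together, and the numerology of \eqref{eq4.31} has been arranged so that the deterministic shift of the characteristic from the diagonal is negligible.
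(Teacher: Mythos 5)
Your decomposition $\P(\cH^{\rho_-}\in I_-)=\P(\cH^{\rho_-}\le 0)-\P(\cH^{\rho_-}<z_0)$ is exactly the paper's, but you implement both halves by a different (and correct) route. For the $\tfrac12$-bound the paper identifies $\{\cH^{\rho_-}\in I_-\}$ with an exit-point event for the stationary model restarted at the corner $v=(1-t_r)N\mathrm{e}_4+z_0\mathrm{e}_1$, uses stochastic monotonicity of $Z^{\rho}_{v,N\mathrm{e}_4}$ in $\rho$ to pass from $\rho_-$ to $1/2$, and then gets exactly $1/2$ from the diagonal symmetry of the density-$1/2$ model; you instead reflect $\rho_-$ into $\rho_+$ across the diagonal and combine with the monotone coupling $\pi^{\rho_-}_{o,N\mathrm{e}_4}\preceq\pi^{\rho_+}_{o,N\mathrm{e}_4}$. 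Both exploit the same two ingredients (a reflection symmetry and a monotonicity in $\rho$), just assembled differently. Your reflection step does need the correction you anticipated: writing $R$ for the rightmost column of the path on the row of $p=(1-t_r)N\mathrm{e}_4$ and $U$ for its topmost row on the column of $p$, the correct identity is $\{U\le p_2\}=\{R>p_1\}$ rather than $\{R\ge p_1\}$ (a path may reach $p$, turn up and then right, giving $R=p_1$ but $U>p_2$); with the corrected version the chain reads $\P(\cH^{\rho_-}\le0)=\P(\cH^{\rho_+}>0)=1-\P(\cH^{\rho_+}\le0)\ge 1-\P(\cH^{\rho_-}\le0)$, so the bound still closes. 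For the tail term the paper needs only a single application of the exit-point moderate-deviation estimate (Lemma~\ref{lem:sb}, via Lemma~\ref{spc}) at the corner $v$, because stationarity makes $\cH^{\rho_-}$ relative to $v$ literally the exit point $Z^{\rho_-}_{v,N\mathrm{e}_4}$; your route through the exit point at the origin, sandwiching by bulk geodesics and Theorem~\ref{thm:locCylinder} reaches the same $e^{-cr_0^3}$ with more machinery but no gap, and your numerology check that $rt_rN^{2/3}\ll r_0t_r^{2/3}N^{2/3}$ under \eqref{eq4.31} is precisely the estimate the paper performs when locating the backward characteristic relative to $v$.
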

\begin{proof}
	Let $v=(1-t_r)Ne_4-r_0t_r^{2/3}N^{2/3}e_1$. Note that
	\begin{equation}\label{eq5}
	\{Z^{\rho_-}_{v,Ne_4}\in [0,-z_0]\}=\{\cH^{{\rho_-}}\in I_-\}.
	\end{equation}
	Moreover
	\begin{equation}\label{eq4}
	\P(Z^{\rho_-}_{v,Ne_4}\in [0,-z_0])=\P(Z^{\rho_-}_{v,Ne_4}\leq -z_0)-\P(Z^{\rho_-}_{v,Ne_4}< 0).
	\end{equation}
 The exit point $Z^\rho_{v,Ne_4}$ is stochastically monotone in $\rho$, that is,
	\begin{equation}
	\P(Z^\rho_{v,Ne_4}\leq x)\geq \P(Z^\lambda_{v,Ne_4}\leq x) \quad \text{for $\rho\leq\lambda$}.
	\end{equation}
	It follows that
	\begin{equation}
	\begin{aligned}
		\P(Z^{\rho_-}_{v,Ne_4}\leq -z_0)&\geq \P(Z^{1/2}_{v,Ne_4}\leq -z_0)=\P(Z^{1/2}_{v-z_0,Ne_4}\leq 0)\\
    &=\P(Z^{1/2}_{(1-t_r)Ne_4,Ne_4}\leq 0)=1/2\label{eq},
	\end{aligned}
	\end{equation}
	where the last equality follows from symmetry. Consider the characteristic $\rho_-$ emanating from $Ne_4$ and its intersection point $c^0$ with the set $\{(1-t_r)Ne_4+ie_1\}_{i\in\Z}$. A simple approximation of the characteristic $\frac{(\rho_-)^2}{(1-\rho_-)^2}$ shows that
	\begin{equation}
		c^0_1= (1-t_r)N -8rt_rN^{2/3}+\cO(N^{1/3}).
	\end{equation}
	It follows that
	\begin{equation}
\begin{aligned}
		c^0_1-v_1&\geq r_0t_r^{2/3} N^{2/3}-8rt_rN^{2/3}(1+o(1))\\
&=\Big(r_0\delta(\log(\delta^{-1}))^{-2}-2\delta^{3/2}(\log(\delta^{-1}))^{-2}(1+o(1))\Big)N^{2/3}.
\end{aligned}
	\end{equation}
	For $\delta$ small enough
	\begin{equation}
		c^0_1-v_1\geq \frac12 r_0\delta(\log(\delta^{-1}))^{-2}N^{2/3}
	\end{equation}
	and
	\begin{equation}
		\frac{c^0_1-v_1}{(t_r N)^{2/3}}\geq \frac{r_0}2.
	\end{equation}
	It follows from Lemma~\ref{lem:sb} that
	\begin{equation}\label{eq3}
		\P(Z^{\rho_-}_{v,Ne_4}<0)\leq e^{-cr_0^3}.
	\end{equation}
	Plugging \eqref{eq} and \eqref{eq3} in \eqref{eq4} and using \eqref{eq5} implies the result.
\end{proof}
Let $o_2=Ne_4$ and define
\begin{equation}
\hat{I}_i=\widehat{G}_{o_2,(1-t_r)N\mathrm{e}_4-(i+1)\mathrm{e}_1}-\widehat{G}_{o_2,(1-t_r)N\mathrm{e}_4-i\mathrm{e}_1} \quad\textrm{for }i\in \Z
\end{equation}
and let $(\arr_j)_{j\in\Z}$ and $(\serv_j)_{j\in\Z}$ be two independent sequences of i.i.d.\ random variables, independent of $\hat{I}$, such that
\begin{equation}
a_0\sim \text{Exp}(1-{\rho_+}),\quad
s_0\sim \text{Exp}(1-{\rho_-}).
\end{equation}
For $i\in\Z$ define the shifted random varaibles
\begin{equation}
X^1_i=\hat{I}_i-2,\quad X^2_i=s_i-2,\quad X^3_i=a_i-2.
\end{equation}
Finally define the random walks
\begin{equation}
\begin{aligned}
&S^{a,x}_i=\sum_{k=x}^{i} X^a_k, \quad \textrm{for }a\in\{1,2,3\},\\
&S^{a,b,x}_i=S^{a,x}_i-S^{b,x}_i \quad \textrm{for }a,b\in\{1,2,3\}.
\end{aligned}
\end{equation}
In particular,
\begin{equation}
S^{2,3,x}_i=\sum_{k=x}^{i}(s_k-a_k).
\end{equation}
We also define unbiased versions of $S^{a,x}$ for $a\in\{2,3\}$
\begin{equation}
\bar{S}^{2,x}_i=\sum_{k=x}^{i}(s_k-(1-{\rho_-})^{-1}),\quad
\bar{S}^{3,x}_i=\sum_{k=x}^{i}(a_k-(1-{\rho_+})^{-1}).
\end{equation}
A simple computation gives, for $i>x$,
\begin{align}\label{ba}
S^{2,x}_i&\leq \bar{S}^{2,x}_i\leq S^{2,x}_i +(i-x)\frac{ rN^{-1/3}}{\frac14+\frac12rN^{-1/3}}\\
\bar{S}^{3,x}_i&\leq S^{3,x}_i\leq \bar{S}^{3,x}_i +(i-x)\frac{ rN^{-1/3}}{\frac14-\frac12rN^{-1/3}}.\label{ba2}
\end{align}

Our next result controls the maximum of $S^{1,z_0}$ on $I$.
\begin{lemma}\label{lem:com}
	There exists $c>0$ such that for any fixed $y>9r_0^{1/2}$
	\begin{equation}
	\P\Big(\sup_{z_0\leq i\leq z_1} |S^{1,z_0}_i|\leq y(z_1-z_0)^{1/2}\Big) \geq 1-2Ce^{-c(y-9r_0^{1/2})^2}-2e^{-cr_0^3}.
	\end{equation}	
for all $N$ large enough.
\end{lemma}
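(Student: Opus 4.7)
\emph{Proof plan.} The idea is to sandwich the backward non-stationary horizontal increments $\hat I_i$ by two i.i.d.\ sequences coming from backward stationary LPPs with densities slightly above and below $1/2$, and then control the resulting random walks via a Chernoff-type maximal inequality. Fix a small absolute constant $c_*>0$ and set $\rho_\pm=\tfrac12\pm c_*(t_rN)^{-1/3}$. Consider the backward stationary LPPs $\widehat G^{\rho_\pm}$ from $o_2=N\mathrm{e}_4$, coupled to $\widehat G$ through the common bulk weights $\omega$. Burke's property makes the horizontal increments $\hat I^{\rho_\pm}_i=\widehat G^{\rho_\pm}_{o_2,q(i+1)}-\widehat G^{\rho_\pm}_{o_2,q(i)}$ on the line $\{y=(1-t_r)N\}$ into i.i.d.\ sequences with marginal $\mathrm{Exp}(1-\rho_\pm)$.

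A direct computation shows that the $\rho_+$-characteristic from $o_2$ meets the line $\{y=(1-t_r)N\}$ about $8c_*(t_rN)^{2/3}$ to the right of the diagonal, whereas $q(z_1)$ sits $r_0(t_rN)^{2/3}$ to the left of the diagonal; the opposite statement holds for $\rho_-$ and $q(z_0)$. Applying Lemma~\ref{spc} in the backward setting (which is legitimate by the space-reversal symmetry already used in the paper) yields
\begin{equation*}
\P\bigl(\widehat Z^{\rho_+}_{o_2,q(z_1)}\le 0,\ \widehat Z^{\rho_-}_{o_2,q(z_0)}\ge 0\bigr)\ \ge\ 1-2e^{-cr_0^3}.
\end{equation*}
On this event the backward analogue of Lemma~\ref{Lem:Comparison} delivers the telescoping sandwich
\begin{equation*}
\sum_{k=z_0}^{i}\hat I^{\rho_+}_k\ \le\ \sum_{k=z_0}^{i}\hat I_k\ \le\ \sum_{k=z_0}^{i}\hat I^{\rho_-}_k\qquad\text{for all }z_0\le i\le z_1.
\end{equation*}

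It remains to control the stationary partial sums. With $m=z_1-z_0=2r_0t_r^{2/3}N^{2/3}$ and $\mu_\pm=(1-\rho_\pm)^{-1}-2=\pm 4c_*(t_rN)^{-1/3}+O((t_rN)^{-2/3})$, the cumulative drift satisfies $n|\mu_\pm|\le m|\mu_+|=4c_*\sqrt{2}\,\sqrt{r_0}\sqrt{m}$ for every $n\le m$; taking $c_*$ small this is at most $9\sqrt{r_0}\sqrt{m}$. The centered walks $\bar W^\pm_n=\sum_{k=z_0}^{z_0+n-1}(\hat I^{\rho_\pm}_k-\mu_\pm-2)$ are i.i.d.\ sums of mean-zero sub-exponential variables with variance $(1-\rho_\pm)^{-2}\sim 4$, so a standard Cram\'er bound combined with a reflection/Doob argument gives
\begin{equation*}
\P\Bigl(\sup_{1\le n\le m}|\bar W^\pm_n|>s\sqrt{m}\Bigr)\ \le\ Ce^{-cs^2}\qquad(0<s\le c'\sqrt{m}).
\end{equation*}
Choosing $s=y-9\sqrt{r_0}$ and a union bound over $\pm$ combined with the good event above yields the claim.

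\emph{Main obstacle.} The delicate point is calibrating $|\rho_\pm-1/2|=c_*(t_rN)^{-1/3}$ so that two competing requirements hold simultaneously: the density gap must be small enough that the accumulated drift stays below $9\sqrt{r_0}\sqrt{m}$, but large enough that the characteristic is displaced by a positive fraction of $r_0(t_rN)^{2/3}$ so Lemma~\ref{spc} supplies cubic decay in $r_0$. The KPZ scaling $|\rho_\pm-1/2|\asymp(t_rN)^{-1/3}$ is precisely the window in which both demands are compatible, which is exactly what produces the shape $e^{-c(y-9\sqrt{r_0})^2}+e^{-cr_0^3}$ appearing on the right-hand side.
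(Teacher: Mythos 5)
Your overall architecture (sandwich the increments $\hat I_i$ between backward stationary increments via the comparison lemma, then control the stationary random walks) is the same as the paper's, but your calibration of the density gap breaks the first half of the argument. You take $\rho_\pm=\tfrac12\pm c_*(t_rN)^{-1/3}$ with $c_*$ a \emph{small absolute constant}, so the backward characteristics from $o_2$ meet the line $\{y=(1-t_r)N\}$ only about $8c_*(t_rN)^{2/3}$ from the diagonal. But the interval $I$ has half-width $z_1=r_0(t_rN)^{2/3}$, and in the application $r_0$ must be taken large (Lemmas~\ref{lem:gl} and~\ref{lem:Llb2} need $e^{-cr_0^3}$ small). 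Hence for $r_0>8c_*$ both characteristics land \emph{strictly inside} $I$, and the uniform exit-point condition you need over all of $I$ (whatever sign convention you adopt, the binding constraint is at the endpoint of $I$ farthest from the characteristic) fails with probability close to $1$, not with probability at most $2e^{-cr_0^3}$. Lemma~\ref{spc} gives cubic decay in $r_0$ only when the endpoint lies at distance of order $r_0(t_rN)^{2/3}$ from the characteristic \emph{on the correct side}; with your parameters it is on the wrong side. The paper's choice is $\lambda_\pm=\tfrac12\pm r_0(t_rN)^{-1/3}$, i.e.\ the perturbation must scale \emph{with} $r_0$, so that the characteristics clear $I$ with margin $7r_0(t_rN)^{2/3}$. (A secondary point: your sandwich $\sum\hat I^{\rho_+}\le\sum\hat I\le\sum\hat I^{\rho_-}$ is oriented the wrong way; horizontal increments are stochastically increasing in the density, cf.\ \eqref{og}, so the larger density gives the upper bound.)

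Your ``main obstacle'' paragraph does put its finger on a real tension: with the correct gap $r_0(t_rN)^{-1/3}$ the cumulative drift over $z_1-z_0=2r_0(t_rN)^{2/3}$ steps is $8r_0^2(t_rN)^{1/3}\asymp r_0^{3/2}(z_1-z_0)^{1/2}$, not $r_0^{1/2}(z_1-z_0)^{1/2}$, so the two requirements are genuinely not simultaneously satisfiable at the stated threshold $9r_0^{1/2}$ — this is an infelicity in the paper's own constants (the centering should be at $\asymp r_0^{3/2}$, which is harmless downstream since $r_0$ and $M$ are eventually fixed constants). The correct resolution is to keep the gap proportional to $r_0$ and absorb the larger drift into the condition on $y$, not to shrink the gap: shrinking it sacrifices the $e^{-cr_0^3}$ sandwiching estimate, which is the load-bearing part of the lemma.
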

\begin{proof}
	Let
	\begin{equation}
		\lambda_\pm=\frac12\pm r_0(t_rN)^{-1/3}.
	\end{equation}
	We first show that
\begin{equation}\label{ine2}
	\begin{aligned}
		\P(\widehat Z^{\lambda_+}_{N\mathrm{e}_4,x}>0,\widehat Z^{\lambda_-}_{N\mathrm{e}_4,x}<0 \quad\forall x\in I)\geq 1-2e^{-cr_0^3}.
	\end{aligned}
\end{equation}
We will show that
\begin{equation}\label{ine}
	\P(\widehat Z^{\lambda_+}_{N\mathrm{e}_4,x}>0\quad\forall x\in I)\geq 1-e^{-cr_0^3},
\end{equation}
a similar result can be shown for $\hat{Z}^{\lambda_-}_{N\mathrm{e}_4,x}$ and the result follows by union bound. Note that
\begin{equation}
	\P(\widehat Z^{\lambda_+}_{N\mathrm{e}_4,x}>0\quad\forall x\in I)=\P(\widehat Z^{\lambda_+}_{N\mathrm{e}_4,(1-t_r)Ne_4+z_1e_1}>0).
\end{equation}
Consider the characteristic $\lambda_+$ emanating from $Ne_4$ and its intersection point $d$ with the set $\{(1-t_r)Ne_4+ie_1\}_{i\in\Z}$. A simple approximation of the characteristic $\frac{(\lambda_+)^2}{(1-\lambda_+)^2}$ shows that
\begin{equation}
d_1= (1-t_r)N +8r_0(t_rN)^{2/3}+\cO(N^{1/3})\geq (1-t_r)N+2r_0 (t_r N)^{2/3}
\end{equation}
for all $N$ large enough.
It follows that
\begin{equation}
d_1-[(1-t_r)N+z_1]\geq r_0(t_r N)^{2/3}.
\end{equation}
As in previous proofs, applying Lemma~\ref{lem:sb} we conclude \eqref{ine} and therefore \eqref{ine2}.

Define
\begin{equation}
\begin{aligned}
	\hat{I}^{\lambda_-}_i&=G^{\lambda_-}_{Ne_4,(1-t_r)Ne_4+ie_1}-G^{\lambda_-}_{Ne_4,(1-t_r)Ne_4+(i+1)e_1},\\
	\hat{I}^{\lambda_+}_i&=G^{\lambda_+}_{Ne_4,(1-t_r)Ne_4+ie_1}-G^{\lambda_+}_{Ne_4,(1-t_r)Ne_4+(i+1)e_1}.
\end{aligned}
\end{equation}
Using the Comparison Lemma, see Section~\ref{SectCompLemma}, we obtain
\begin{equation}
	\P(\hat{I}^{\lambda_-}_i\leq \hat{I}_i \leq \hat{I}^{\lambda_+}_i \quad i\in I)\geq 1-2e^{-cr_0^3}.
\end{equation}
Therefore, we also have
\begin{equation}\label{ine3}
	\P\Big(\sum_{k=z_0}^{i}(\hat I^{\lambda_-}_i-2)\leq S^{1,z_0}_i \leq \sum_{k=z_0}^{i}(\hat I^{\lambda_+}_i-2)\Big)\geq 1-2e^{-cr_0^3}.
\end{equation}

Denote $\widehat S_i:=\sum_{k=z_0}^{i}(\hat I^{\lambda_+}_k-(1-\lambda_+)^{-1})$, which is a martingale starting at time $i=z_0$. Then
\begin{equation}
\begin{aligned}
	&\P\Big(\sup_{z_0\leq i\leq z_1}\sum_{k=z_0}^{i}(\hat I^{\lambda_+}_k-2)\leq yr_0^{1/2}(t_rN)^{1/3}\Big)\\
	&\geq\P\Big(\sup_{z_0\leq i\leq z_1}\widehat S_i+(z_1-z_0)((1-\lambda_+)^{-1}-2)\leq yr_0^{1/2}(t_rN)^{1/3}\Big)\\
	&\geq \P\Big(\sup_{z_0\leq i\leq z_1}r_0^{-1/2}(t_rN)^{-1/3}\widehat S_i\leq y-9r_0^{1/2}\Big),
\end{aligned}
\end{equation}
where in the third line we used
\begin{equation}
	(z_1-z_0)((1-\lambda_+)^{-1}-2)= 8r_0(t_rN)^{1/3}+\cO(1)\leq 9r_0(t_rN)^{1/3}
\end{equation}
for all $N$ large enough. The scaling is chosen such that, setting $i=z_0+2 \tau r_0(t_r N)^{2/3}$, the scaled random walk $r_0^{-1/2}(t_rN)^{-1/3}\widehat S_i$ converges as $N\to\infty$ weakly to a Brownian motion on the interval $\tau\in [0,1]$ for some (finite) diffusion constant. Thus there exists constants $C,c>0$ such that for any given $y>9 r_0^{1/2}$
\begin{equation}\label{ine4}
	\P\Big(\sup_{z_0\leq i\leq z_1}\sum_{k=z_0}^{i}(\hat I^{\lambda_+}_k-2)\leq yr_0^{1/2}(t_rN)^{1/3}\Big)\geq 1-Ce^{-c(y-9r_0^{1/2})^2}
\end{equation}
for all $N$ large enough. Similarly we show that for any given $y>9r_0^{1/2}$
\begin{equation}\label{ine5}
\P\Big(\inf_{z_0\leq i\leq z_1}\sum_{k=z_0}^{i}(\hat I^{\lambda_-}_k-2)\geq -yr_0^{1/2}(t_rN)^{1/3}\Big)\geq 1-Ce^{-c(y-9r_0^{1/2})^2}
\end{equation}
for all $N$ large enough. From \eqref{ine3}, \eqref{ine4} and \eqref{ine5} it follows that
\begin{equation}
\begin{aligned}
	&\P\Big(\sup_{z_0\leq i\leq z_1} |S^{1,z_0}_i|\leq y(z_1-z_0)^{1/2}\Big)=\P\Big(\sup_{z_0\leq i\leq z_1} |S^{1,z_0}_i|\leq yr_0^{1/2}(t_rN)^{1/3}\Big) \\
	&\geq\P\Big( \inf_{z_0\leq i\leq z_1}\sum_{k=z_0}^{i}(\hat I^{\lambda_-}_k-2)\geq -yr_0^{1/2}(t_rN)^{1/3},\sup_{z_0\leq i\leq z_1}\sum_{k=z_0}^{i}(\hat I^{\lambda_+}_k-2)\leq yr_0^{1/2}(t_rN)^{1/3}\Big)
	\\
	&\geq 1-2e^{-c(y-9r_0^{1/2})^2}-2e^{-cr_0^3}.
\end{aligned}
\end{equation}
\end{proof}

Next we control the fluctuations of the random walk $S^{2,z_0}$.
\begin{lemma}\label{lem:Llb1}
Let $\cC=\{\sup_{z_0\leq i \leq z_1}|S^{2,z_0}_i| \leq y(z_1-z_0)^{1/2} \}$. Under the choice of parameters in \eqref{eq4.31}, there exists $c,\delta_0>0$ such that for $\delta<\delta_0$, and any fixed $y>r_0^{1/2}$
	\begin{equation}\label{ine1}
	\P(\cC)>1-Ce^{-c(y-r_0^{1/2})^2}
	\end{equation}
for all $N$ large enough.
\end{lemma}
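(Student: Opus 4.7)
The strategy mirrors the last part of the proof of Lemma~\ref{lem:com}: decompose $S^{2,z_0}$ into a centered martingale plus a deterministic bias, show the bias is small under the scaling \eqref{eq4.31}, and then bound the supremum of the centered martingale by a Brownian/Doob argument. By definition, the $s_k$ are i.i.d.\ $\mathrm{Exp}(1-\rho_-)$ with $\rho_-=\tfrac12-rN^{-1/3}$, so their common mean is $(1-\rho_-)^{-1}=2-4rN^{-1/3}(1+o(1))$ and we can write
\begin{equation}
S^{2,z_0}_i = \bar S^{2,z_0}_i + (i-z_0+1)\bigl((1-\rho_-)^{-1}-2\bigr),
\end{equation}
where $\bar S^{2,z_0}_i$ is a mean-zero martingale whose step variance $(1-\rho_-)^{-2}$ tends to $4$.

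First, I would bound the bias uniformly in $z_0\leq i\leq z_1$. Using \eqref{ba}, $|(1-\rho_-)^{-1}-2|\leq 4rN^{-1/3}(1+o(1))$ and $|i-z_0|\leq z_1-z_0=2r_0 t_r^{2/3}N^{2/3}$, so the bias is at most $8 r\,r_0\, t_r^{2/3}N^{1/3}(1+o(1))$. In units of $(z_1-z_0)^{1/2}=\sqrt{2r_0}\,t_r^{1/3}N^{1/3}$ this equals $4\sqrt{2}\,r\sqrt{r_0}\,t_r^{1/3}(1+o(1))$, and by the choice \eqref{eq4.31} we have $r t_r^{1/3}=\tfrac14\delta^{1/2}$, so the bias is bounded by $C\delta^{1/2}r_0^{1/2}(z_1-z_0)^{1/2}$, which is at most $\tfrac12 r_0^{1/2}(z_1-z_0)^{1/2}$ once $\delta$ is small enough. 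Consequently, on the event $\{\sup_{z_0\leq i\leq z_1}|\bar S^{2,z_0}_i|\leq (y-r_0^{1/2})(z_1-z_0)^{1/2}\}$, event $\cC$ holds.

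Second, I would bound the centred martingale. Setting $i=z_0+\lfloor \tau(z_1-z_0)\rfloor$, the rescaled process $\tau\mapsto (z_1-z_0)^{-1/2}\bar S^{2,z_0}_{z_0+\lfloor \tau(z_1-z_0)\rfloor}$ converges weakly on $[0,1]$ to a Brownian motion with a finite diffusion constant (the step variances converge to $4$), and by the reflection principle / Doob's maximal inequality applied to the exponential martingale $\exp(\lambda \bar S^{2,z_0}_i)$, together with an optimisation in $\lambda$ (i.e.\ a one-sided Bernstein bound for sums of centred Exp$(1-\rho_-)$ variables), one obtains
\begin{equation}
\P\Big(\sup_{z_0\leq i\leq z_1}|\bar S^{2,z_0}_i|\geq y''(z_1-z_0)^{1/2}\Big)\leq C e^{-c(y'')^2}
\end{equation}
for all $N$ large enough and $0<y''\leq(z_1-z_0)^{1/2}$. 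Applying this with $y''=y-r_0^{1/2}$ and combining with the bias bound above yields \eqref{ine1}.

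The only subtle point is that the increments are sub-exponential rather than sub-Gaussian, so Bernstein's inequality gives a Gaussian tail only up to deviations of order $\sqrt{n}$; this is exactly the regime enforced by the normalisation $(z_1-z_0)^{1/2}$, so there is no actual obstacle. All other elements (stochastic monotonicity of the exit point, Comparison Lemma, Lemma~\ref{spc}) used in Lemma~\ref{lem:com} are unnecessary here because the sequence $(s_k)$ is by construction i.i.d.\ with an explicit rate, so the proof reduces to a standard maximal inequality for a biased random walk.
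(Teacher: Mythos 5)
Your proposal is correct and follows essentially the same route as the paper: both decompose $S^{2,z_0}$ into the centred walk $\bar S^{2,z_0}$ plus a deterministic drift, use the parameter choice \eqref{eq4.31} (via $r\,t_r^{1/3}=\tfrac14\delta^{1/2}$) to absorb the drift into the $r_0^{1/2}$ shift, and finish with a maximal inequality for the rescaled centred walk. Your explicit Bernstein/exponential-martingale justification of the Gaussian tail is slightly more detailed than the paper's appeal to weak convergence plus Doob, but it is the same argument.
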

\begin{proof}
	By \eqref{ba} we have
	\begin{equation}
	\P(\cC )>\P\Big(\sup_{z_0\leq i \leq z_1}|(z_1-z_0)^{-1/2} \bar{S}^{2,z_0}_i| \leq y-(z_1-z_0)^{1/2}\frac{ rN^{-{1/3}}}{\frac14+\frac12rN^{-1/3}}\Big).
	\end{equation}
	Note that
\begin{equation}\label{eq6.52}
	\begin{aligned}
		(z_1-z_0)^{1/2}\frac{ rN^{-{1/3}}}{\frac14+\frac12rN^{-1/3}}&=(2r_0)^{1/2}(t_rN)^{1/3}\frac{ rN^{-{1/3}}}{\frac14+\frac12rN^{-1/3}}
=(2r_0)^{1/2}\frac{t_r^{1/3} r}{\frac14+\frac12rN^{-1/3}}\\
&=(2r_0)^{1/2}\frac{\tfrac14\delta^{1/2}}{\frac14+\frac12rN^{-1/3}}.
	\end{aligned}
\end{equation}
Thus for all $N$ large enough and $\delta$ small enough
	\begin{equation}
		\P(\cC )>\P\Big(\sup_{z_0\leq i \leq z_1}|(z_1-z_0)^{-1/2} \bar{S}^{2,z_0}_i| \leq y-r_0^{1/2}\Big).
	\end{equation}
Also notice that $(z_1-z_0)^{-1/2} \bar{S}^{2,z_0}_i$ converges weakly to a Brownian motion as $N\to\infty$. Using Doob maximum inequality one deduces that for $N$ large enough
	 \eqref{ine1} indeed holds.
\end{proof}

For $M>0$ define
\begin{equation}
	\cE_2=\Big\{\sup_{i\in I}|S^{2,z_0}_i| \leq M(z_1-z_0)^{1/2} \Big\}.
\end{equation}
For $x>0$, define the sets
\begin{equation}
\begin{aligned}
\cE_1&=\Big\{\cH^{{\rho_-}}\in I_-,\sup_{i\in I}|S^{2,1,z_0}_i|\leq 2M(z_1-z_0)^{1/2}\Big\},\\
\cE_{3,x}&=\Big\{\inf_{i\in I_-} (x+S^{2,3,z_0}_i) > -2M(z_1-z_0)^{1/2} ,\inf_{i\in I_+} (x+S^{2,3,z_0}_i)<-7M(z_1-z_0)^{1/2}\Big\}.
\end{aligned}
\end{equation}
Note that on the event $\cE_2$
\begin{equation}
S^{2,1,z_0}_i>-M(z_1-z_0)^{1/2}-S^{1,z_0}_i\textrm{ and }S^{2,1,z_0}_i<M(z_1-z_0)^{1/2}-S^{1,z_0}_i,
\end{equation}
which implies
\begin{equation}\label{subs}
\cE_1\cap \cE_2 \supseteq \{\cH^{{\rho_-}}\in I_-,\sup_{i\in I}|S^{1,z_0}_i|\leq M(z_1-z_0)^{1/2}\}\cap\cE_2.
\end{equation}
Similarly, on the event $\cE_2$
\begin{equation}\label{eq4.42}
S^{2,3,z_0}_i>-M(z_1-z_0)^{1/2}-S^{3,z_0}_i\textrm{ and }
S^{2,3,z_0}_i<M(z_1-z_0)^{1/2}-S^{3,z_0}_i,
\end{equation}
so that, for $x\in[0,(z_1-z_0)^{1/2}]$ and $M\geq 1$,
\begin{equation}
\begin{aligned}\label{subs2}
\cE_{3,x}\cap \cE_2
&\supseteq\{\inf_{i\in I_-} (x-S^{3,z_0}_i) > -M(z_1-z_0)^{1/2} ,\inf_{i\in I_+} (x-S^{3,z_0}_i)<-8M(z_1-z_0)^{1/2}\}\cap \cE_2\\
&\supseteq \{\sup_{i\in I_-} S^{3,z_0}_i < M(z_1-z_0)^{1/2} ,\sup_{i\in I_+} S^{3,z_0}_i>9M(z_1-z_0)^{1/2}\}\cap \cE_2\\
&\supseteq \{\sup_{i\in I_-} S^{3,z_0}_i < M(z_1-z_0)^{1/2} ,\sup_{i\in I_+} S^{3,z_0}_i-S^{3,z_0}_0>9M(z_1-z_0)^{1/2}-S^{3,z_0}_0\}\cap \cE_2\\
&\supseteq \{\sup_{i\in I_-} |S^{3,z_0}_i| < M(z_1-z_0)^{1/2} ,\sup_{i\in I_+} S^{3,0}_i>10M(z_1-z_0)^{1/2}\}\cap \cE_2\\
&\supseteq \{\sup_{i\in I_-} |S^{3,z_0}_i| < M(z_1-z_0)^{1/2} , S^{3,0}_{z_1}>10M(z_1-z_0)^{1/2}\}\cap \cE_2,
\end{aligned}
\end{equation}
as in the first line we used \eqref{eq4.42}, in the second $x=0$ for the first term and $x=(z_1-z_0)^{1/2}$ for the second one.

Next we are going to prove that, conditioned on $\cE_2$, $\cE_1$ and $\cE_{3,x}$ occurs with positive probability.

\begin{lemma}\label{lem:Llb2} There exists $c_2,r_0>0$ and $M\geq 1$ such that for $x\in [0,(z_1-z_0)^{1/2}]$
	\begin{equation}
	\P(\cE_1,\cE_{3,x}|\cE_2)>c_2.
	\end{equation}
for all $N$ large enough.
\end{lemma}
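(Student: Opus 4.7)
The plan is to exploit the mutual independence of three sources of randomness. By construction, the fresh sequences $(s_i)_{i\in\Z} \sim \mathrm{Exp}(1-\rho_-)$ and $(a_i)_{i\in\Z} \sim \mathrm{Exp}(1-\rho_+)$ are independent of each other and of the LPP randomness (the $\omega$-weights and the stationary $\rho_\pm$-boundary data), hence also of $\hat I$ and $\cH^{\rho_-}$. Setting
\begin{align*}
\cF_1 &= \{\cH^{\rho_-} \in I_-\} \cap \{\sup_{i \in I}|S^{1,z_0}_i| \leq M(z_1-z_0)^{1/2}\},\\
\cF_3 &= \{\sup_{i \in I_-}|S^{3,z_0}_i| < M(z_1-z_0)^{1/2}\} \cap \{S^{3,0}_{z_1} > 10M(z_1-z_0)^{1/2}\},
\end{align*}
the events $\cF_1 \in \sigma(\omega,\text{boundary})$, $\cF_3 \in \sigma(a)$, $\cE_2 \in \sigma(s)$ lie in three mutually independent $\sigma$-algebras. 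The inclusions \eqref{subs} and \eqref{subs2} (valid for $x \in [0,(z_1-z_0)^{1/2}]$) give $\cE_1 \cap \cE_{3,x} \cap \cE_2 \supseteq \cF_1 \cap \cF_3 \cap \cE_2$, hence
\begin{equation*}
\P(\cE_1, \cE_{3,x} \mid \cE_2) \geq \P(\cF_1 \cap \cF_3 \mid \cE_2) = \P(\cF_1)\,\P(\cF_3),
\end{equation*}
reducing the proof to lower-bounding the two factors by universal constants.

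To bound $\P(\cF_1)$ I would combine Lemma~\ref{lem:gl} ($\P(\cH^{\rho_-} \in I_-) \geq \tfrac12 - e^{-c r_0^3}$) with Lemma~\ref{lem:com} applied with $y = M > 9 r_0^{1/2}$ via a union bound, obtaining $\P(\cF_1) \geq \tfrac12 - \eta$ for any prescribed $\eta > 0$ once $r_0$ and $M$ are taken as fixed universal constants, sufficiently large. For $\P(\cF_3)$, I would use that $S^{3,z_0}_\bullet$ has per-step mean $4rN^{-1/3} + O(N^{-2/3})$ and per-step variance tending to $(1-\rho_+)^{-2} \to 4$; under the scaling \eqref{eq4.31} the cumulative drift across the window $I$ is only of order $\sqrt{2r_0}\,\delta^{1/2}(z_1-z_0)^{1/2}$, negligible as $\delta \to 0$. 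Donsker's invariance principle then identifies $\cF_3$, in the scaling limit, with a Brownian event (stay in a $[-M,M]$-box on $[-1,0]$ and exceed a fixed multiple of the standard deviation at $t=1$) of strictly positive probability—uniformly in $x \in [0,(z_1-z_0)^{1/2}]$, since $x$ enters only through the $2M$ versus $7M$ slack in the definition of $\cE_{3,x}$. Choosing $r_0,M$ as fixed universal constants yields $c_2 := \P(\cF_1)\P(\cF_3) > 0$.

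The main obstacle is organisational rather than analytical: one must carefully justify that $\cH^{\rho_-}$ is measurable with respect to $\sigma(\omega,\text{boundary})$ (so that it is genuinely independent of the auxiliary $s,a$ introduced for the Fan--Sepp\"al\"ainen coupling), and then balance $r_0$ and $M$ so that $M = O(r_0^{1/2})$ remains a moderate fixed constant. This keeps the Gaussian-type penalty $e^{-c M^2}$ for the upward excursion $\{S^{3,0}_{z_1} > 10M(z_1-z_0)^{1/2}\}$ bounded below by a positive universal constant, while simultaneously making the error terms $e^{-cr_0^3}$ and $e^{-c(M-9r_0^{1/2})^2}$ in Lemmas~\ref{lem:gl} and~\ref{lem:com} small enough that $\P(\cF_1) \geq 1/4$.
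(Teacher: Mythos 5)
Your proposal is correct and follows essentially the same route as the paper's proof: the same reduction $\P(\cE_1,\cE_{3,x}\mid\cE_2)\geq\P(\cF_1)\P(\cF_3)$ via the inclusions \eqref{subs}--\eqref{subs2} and the mutual independence of $\hat I$, $\servv$, $\arrv$, the same use of Lemmas~\ref{lem:gl} and~\ref{lem:com} to get $\P(\cF_1)\geq 1/8$, and the same Donsker/negligible-drift argument for $\P(\cF_3)$.
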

\begin{proof}
	Define
\begin{equation}
	\begin{aligned}
	&\cF_1=\{\cH^{{\rho_-}}\in I_-,\sup_{i\in I}|S^{1,z_0}_i|\leq M(z_1-z_0)^{1/2}\},\\
	&\cF_3=\{\sup_{i\in I_-} |S^{3,z_0}_i| < M(z_1-z_0)^{1/2} , S^{3,0}_{z_1}>10M(z_1-z_0)^{1/2}\}.
	\end{aligned}
\end{equation}
	By \eqref{subs} and \eqref{subs2} and the independence of $S^{1,z_0}$ and $S^{3,z_0}$
	\begin{equation}\label{atb}
	\P(\cE_1,\cE_{3,x}|\cE_2)\geq \P(\cF_1,\cF_3|\cE_2)=\P(\cF_1)\P(\cF_3).
	\end{equation}
Next we want to derive lower bounds for $\P(\cF_1)$ and $\P(\cF_3)$.

Note that
\begin{equation}
	\P(\cF_1)\geq \P(\cH^{{\rho_-}}\in I_-)-\P(\sup_{i\in I}|S^{1,z_0}_i|\geq M(z_1-z_0)^{1/2}).
\end{equation}
By Lemma~\ref{lem:gl} and Lemma~\ref{lem:com} there exists $r_0>0$ and $M>9r_0^{1/2}$ for which
\begin{equation}
	\P(\cH^{{\rho_-}}\in I_-)\geq 1/4\quad \textrm{ and }\quad\P(\sup_{i\in I}|S^{1,z_0}_i|\geq M(z_1-z_0)^{1/2})\leq 1/8,
\end{equation}
so that
\begin{equation}\label{f1}
	\P(\cF_1)\geq 1/8.
\end{equation}

	Let us now try to find a lower bound for $\P(\cF_3)$.
	\begin{align}
	&\P(\cF_3)=\P\Big(\sup_{i\in I_-} |S^{3,z_0}_i| < M(z_1-z_0)^{1/2} ,S^{3,0}_{z_1}>10M(z_1-z_0)^{1/2}\Big)\\
	&=\P\Big(\sup_{i\in I_-} |S^{3,z_0}_i| < M(z_1-z_0)^{1/2}\Big)\P\Big( S^{3,0}_{z_1}>10M(z_1-z_0)^{1/2}\Big)
	\end{align}
	since the processes $\{S^{3,z_0}_{i}\}_{i\in I_-}$ and $\{S^{3,0}_{i}\}_{i\in I_+}$ are independent.
Note that  the centered random walks $\{(z_1-z_0)^{-1/2}\bar S^{3,z_0}_{i}\}_{i\in I_-}$ and $\{(z_1-z_0)^{-1/2}\bar S^{3,0}_{i}\}_{i\in I_+}$ converge weakly to a Brownian motion.
Furthermore, the difference coming from the non-zero drift of $S^{3,z_0}_i$ is, by \eqref{ba2}, bounded by
\begin{equation}
\sup_{i\in I}|(z_1-z_0)^{-1/2}S^{3,z_0}_i-(z_1-z_0)^{-1/2}\bar S^{3,z_0}_i|\leq (z_1-z_0)^{1/2}\frac{r N^{-1/3}}{\frac14-\frac12 r N^{-1/3}}
\end{equation}
which is $o\Big((z_1-z_0)^{1/2}\Big)$ as $\delta\rightarrow0$(similarly to \eqref{eq6.52}).
Thus by choosing $M$ large enough, there exists  $c_2>0$ such that for $N$ large enough
\begin{equation}
\begin{aligned}
		&\P\Big(\sup_{i\in I_-} |S^{3,z_0}_i| < M(z_1-z_0)^{1/2}\Big)\geq 1/2,\\
		&\P\Big( S^{3,0}_{z_1}>10M(z_1-z_0)^{1/2}\Big)\geq 16c_2.
	\end{aligned}
\end{equation}
	I follows that
	\begin{align}\label{f3}
		\P(\cF_3)\geq 8c_2
	\end{align}
	Plugging \eqref{f1} and \eqref{f3} in \eqref{atb} we obtain the result.
\end{proof}

Now we can prove the main statement of this section.
\begin{proof}[Proof of Proposition~\ref{prop:Lb}]
	Note that
\begin{equation}
\Big\{\cH^{{\rho_-}}\in I_-,\sup_{i\in I_-}\sum_{k=z_0}^i(\hat I_k^{{\rho_+}}-\hat{I}_k)<\sup_{i\in I_+}\sum_{k=z_0}^i(\hat I_k^{{\rho_+}}-\hat{I}_k)\Big\}
\subseteq \Big\{	\cH^{{\rho_-}}\in I_-, \cH^{{\rho_+}}>0 \Big\}.
\end{equation}
Indeed, if $\cH^{{\rho_-}}\in I_-$ then also $\cH^{{\rho_+}}\geq z_0$, and the second condition implies that $\cH^{{\rho_+}}\not\in I_-$.
Using a decomposition as in \eqref{eqEjs}, we can write
\begin{equation}
	\begin{aligned}
	&\Big\{\cH^{{\rho_-}}\in I_-,\sup_{i\in I_-}\sum_{k=z_0}^i(\hat I_k^{{\rho_+}}-\hat{I}_k)<\sup_{i\in I_+}\sum_{k=z_0}^i(\hat I_k^{{\rho_+}}-\hat{I}_k)\Big\}\\
	=&\Big\{\cH^{{\rho_-}}\in I_-, \sup_{i\in I_-}\sum_{k=z_0}^ie_k+\sum_{k=z_0}^i(\hat I_k^{{\rho_-}}-\hat{I}_k)<\sup_{i\in I_+}\sum_{k=z_0}^ie_k+\sum_{k=z_0}^i(\hat I_k^{{\rho_-}}-\hat{I}_k)\Big\}\label{i}
	\end{aligned}
\end{equation}
	where $e_j=\hat I^{\rho_+}_j-\hat I^{\rho_-}_j$. By \eqref{sume} the following event has the same probability as \eqref{i}
	\begin{multline}
	\cE_4=\Big\{\cH^{{\rho_-}}\in I_-,\\
\sup_{i\in I_-}\Big(\inf_{z_0\leq l\leq i}w_{z_0-1}+S^{2,3,z_0}_l\Big)^-+S^{2,1,z_0}_i<\sup_{i\in I_+}\Big(\inf_{z_0\leq l\leq i}w_{z_0-1}+S^{2,3,z_0}_l\Big)^-+S^{2,1,z_0}_i\Big\}.
	\end{multline}
	It follows that
	\begin{equation}\label{Llb5}
	\P(\cH^{{\rho_-}}\in I_-, \cH^{{\rho_+}}>0) \geq \P(\cE_4).
	\end{equation}

	For $x>0$, define
	\begin{multline}
	\cE_{4,x}=\Big\{\sup_{i\in I_-}S^{2,1,z_0}_i>\sup_{i\in I_+}S^{2,1,z_0}_i,\\
\sup_{i\in I_-}\Big(\inf_{z_0\leq l\leq i}x+S^{2,3,z_0}_l\Big)^-+S^{2,1,z_0}_i<\sup_{i\in I_+}\Big(\inf_{z_0\leq l\leq i}x+S^{2,3,z_0}_l\Big)^-+S^{2,1,z_0}_i\Big\}.
	\end{multline}
	Note that
	\begin{equation}
	\cE_2\cap\cE_1\cap\cE_{3,x} \subseteq \cE_{4,x}.
	\end{equation}

In our case, the value of $x$ in $\cE_{4,x}$ is random and distributed according to \eqref{des}. Therefore we have
\begin{equation}
	\begin{aligned}
	\P(\cE_4)&=\int_0^\infty\P(\cE_4|w_{z_0-1}=w)f(dw)=\int_0^\infty\P(\cE_{4,w})f(dw)\\
	&\geq \int_0^\infty\P(\cE_2,\cE_1,\cE_{3,w})f(dw)\geq \int_0^{(z_1-z_0)^{1/2}}\P(\cE_2,\cE_1,\cE_{3,w})f(dw),\label{Llb}
	\end{aligned}
\end{equation}
	where in the second equality we used the fact that the processes $\{S^{i,z_0}_j\}_{j\geq z_0,i\in\{1,2,3\}}$ are independent of $w_{z_0-1}$.

Taking $M$ large enough, Lemma~\ref{lem:Llb1} gives
	\begin{equation}\label{e2b}
		\P(\cE_2)\geq 1/2
	\end{equation}
for all $N$ large enough. \eqref{e2b} and Lemma~\ref{lem:Llb2} imply that there exists $c_4>0$ such that for any $w\in [0,(z_1-z_0)^{1/2}]$, $\delta<\delta_0$, and large enough $N$
	\begin{equation}\label{Llb4}
	\P(\cE_2,\cE_1,\cE_{3,w})\geq \tfrac12 c_2.
	\end{equation}
	Plugging \eqref{Llb4} in \eqref{Llb}
	\begin{equation}
	\P(\cE_4)\geq \tfrac12 c_2\Big(1-\frac{\frac12-rN^{-1/3}}{\frac12+rN^{-1/3}}e^{-2rN^{-1/3}(z_1-z_0)^{1/2}}\Big).
	\end{equation}
	Note that
	\begin{equation}
		2rN^{-1/3}(z_1-z_0)^{1/2}=2r (2r_0)^{1/2}t_r^{1/3}=2^{3/2} r_0^{1/2}\delta^{1/2}\rightarrow 0
	\end{equation}
	if $r_0 \delta\rightarrow 0$ as $\delta\to 0$. Then by first order approximation of the exponential function, there exists $C>0$ such that for $\delta<\delta_0$, $r_0\leq \delta^{-1}(\log\delta^{-1})^{-1}$, and large enough $N$
	\begin{equation}\label{Llb6}
	\P(\cE_4)\geq C\delta^{1/2}.
	\end{equation}
	Using \eqref{Llb6} in \eqref{Llb5} we obtain the result.
\end{proof}

\subsection{Proof of Theorem~\ref{thm:LBcoal}}
Finally we prove the second main result of this paper.
\begin{proof}[Proof of Theorem~\ref{thm:LBcoal}]
	Let
	\begin{equation}
		\rho'_+=\frac12+\frac1{120}rN^{-1/3},\quad
		\rho'_-=\frac12-\frac1{120}rN^{-1/3},
	\end{equation}
	and define
	\begin{equation}
		\cA'=\left\{-\frac r8 N^{2/3} \leq Z^{ \rho'_-}_{o,N\mathrm{e}_4}\leq Z^{\rho'_+}_{o,N\mathrm{e}_4}\leq \frac r8 N^{2/3}\right\}.
	\end{equation}
	By Lemma~\ref{lem:sb}
	\begin{equation}\label{Ap}
		\P(\cA')\geq 1-e^{-cr^3}
	\end{equation}
	Define
	\begin{equation}
		y^1=\frac14rN^{2/3} \mathrm{e}_1,\quad
		y^2=\frac14rN^{2/3} \mathrm{e}_2.
	\end{equation}
	Let $o^2=N\mathrm{e}_4$. Similar to \eqref{ex} we define
	\begin{equation}
	H^j=\sup\{i:(i,(1-t_r)N)\in \pi_{y^j,o^2}\} \quad \textrm{for }i\in\{1,2\}.
	\end{equation}
	Note that on the event $\cA'$, the geodesics $\pi^{\rho'_-}_{o,o^2},\pi^{\rho'_+}_{o,o^2}$ are sandwiched between the geodesics $\pi_{y^1,o^2},\pi_{y^2,o^2}$, which implies that if the geodesics $\pi^{\rho'_-}_{o,o^2},\pi^{\rho'_+}_{o,o^2}$ did not coalesce then neither did $\pi_{y^1,o^2},\pi_{y^2,o^2}$ i.e.\
	\begin{equation}\label{cont}
		\cA'\cap \{\cH^{\rho'_-}\in I_-, \cH^{\rho'_+}>0\} \subseteq \{C_p(\pi_{y^1,o^2},\pi_{y^2,o^2})>L_{1-t_r}\}.
	\end{equation}
	Indeed, on the event $\cA'$
	\begin{equation}
		-y^2_2<Z^{\rho_-}_{o,o^2}\leq Z^{\rho_+}_{o,o^2} \leq y^1_1
	\end{equation}
	so that
	\begin{equation}
		\pi_{y_2,o^2} \preceq \pi^{\rho'_-}_{o,o^2}\preceq \pi^{\rho'_+}_{o,o^2} \preceq \pi_{y_1,o^2}
	\end{equation}
	 which implies that under $\cA'\cap \{\cH^{\rho'_-}\in I_-, \cH^{\rho'_+}>0\}$
	 \begin{equation}
	 	L_{1-t_r}\leq C_p(\pi^{\rho'_+}_{o,o^2},\pi^{\rho'_-}_{o,o^2})\leq C_p(\pi_{y^1,o^2},\pi_{y^2,o^2})
	 \end{equation}
	 Note that as $y^1,y^2\in \cR^{r/2,1/4}$ and $o^2\in \cC^{s_r/2,t_r}$
	\begin{equation}\label{eq8}
		\{C_p(\pi_{y^1,o^2},\pi_{y^2,o^2})>L_{1-t_r}\}\subseteq \{\exists x \in \cC^{s_r/2,t_r},y\in\cR^{r/2,1/4}: C_p(\pi^{1/2}_{o,x},\pi_{y,x})>L_{1-t_r}\}.
	\end{equation}
	Indeed, if the geodesics $\pi_{y^1,o^2}$ and $\pi_{y^2,o^2}$ do not meet before the time horizon $L_{1-t_r}$, at least one of the them did not coalesce with the geodesic $\pi^{1/2}_{o,o^2}$ before $L_{1-t_r}$. It follows from \eqref{cont}, \eqref{eq8} and \eqref{Ap} that
	 \begin{equation}
	 	\P(\exists x \in \cC^{s_r/2,t_r},y\in\cR^{r/2,1/4}: C_p(\pi^{1/2}_{o,x},\pi_{y,x})>L_{1-t_r})\geq C\delta^{1/2}.
	 \end{equation}
\end{proof}
\subsection{Proof of Theorem~\ref{thm:coal2}}
\begin{proof}[Proof of Theorem~\ref{thm:coal2}]
	Note that
	\begin{equation}\label{inc}
\begin{aligned}
		&\{C_p(\pi^{1/2}_{o,x},\pi_{y,x})\leq L_{1-\tau} \quad \forall x\in \cC^{\delta,\tau},y\in \cR^{\frac18\log\delta^{-1},1/4}\}\\
		&\subseteq \{C_p(\pi_{w,x},\pi_{y,x})\leq L_{1-\tau} \quad \forall x\in \cC^{\delta,\tau},w,y\in \cR^{\frac18\log\delta^{-1},1/4}\}.
\end{aligned}
\end{equation}
	Indeed, on the event that any geodesic starting from $\cR^{\frac18\log\delta^{-1},1/4}$ and terminating in $\cC^{\delta,\tau}$ coalesces with the stationary geodesic before the time horizon $L_{1-\tau}$ any two geodesics starting from $\cR^{\frac18\log\delta^{-1},1/4}$ and terminating in $\cC^{\delta,\tau}$ must coalesce as well. Theorem~\ref{thm:coal} and \eqref{inc} imply the lower bound in Theorem~\ref{thm:coal2}.

Next note that
\begin{equation}
	\begin{aligned}\label{inc2}
	&\{\exists x\in \cC^{\delta,\tau},y\in \cR^{\frac18\log\delta^{-1},1/4}: C_p(\pi^{1/2}_{o,x},\pi_{y,x})> L_{1-\tau}, |Z^{1/2}_{o,x}|\leq \frac1{16}\log(\delta^{-1})N^{2/3} \}\\
	&\subseteq \{C_p(\pi_{w,x},\pi_{y,x})\leq L_{1-\tau} \quad \forall x\in \cC^{\delta,\tau},w,y\in \cR^{\frac18\log\delta^{-1},1/4}\}^c.
	\end{aligned}
\end{equation}
	To illustrate the validity of \eqref{inc2}, assume w.l.o.g.\ that $x=Ne_4,y=o$ such that $C_p(\pi^{1/2}_{o,e_4N},\pi_{o,e_4N})>L_{1-\tau}$ and that $\frac1{16}\log(\delta^{-1})N^{2/3}\geq Z^{1/2}_{o,Ne_4}=a>0$. It follows that
	\begin{equation}\label{set}
		C_p(\pi_{ae_1,Ne_4},\pi_{o,Ne_4})> L_{1-\tau}
	\end{equation}
	holds. The event in \eqref{set} is contained in the event in the last line of \eqref{inc2} which implies \eqref{inc2}.

\eqref{inc2} implies that
\begin{equation}
	\begin{aligned}\label{ine7}
		&\P\Big(C_p(\pi^{1/2}_{o,x},\pi_{y,x})\leq L_{1-\tau} \quad \forall x\in \cC^{\delta,\tau},y\in \cR^{\frac18\log\delta^{-1},1/4}\Big)\\
		&+\P\Big(|Z^{1/2}_{o,x}|>\frac1{16}\log(\delta^{-1})N^{2/3} \quad \text{ for some }x\in \cC^{\delta,\tau}\Big)\\
		&\geq\P\Big(C_p(\pi_{w,x},\pi_{y,x})\leq L_{1-\tau} \quad \forall x\in \cC^{\delta,\tau},w,y\in \cR^{\frac18\log\delta^{-1},1/4}\Big).
	\end{aligned}
\end{equation}
	Next we claim that for some $c>0$
	\begin{equation}\label{ine6}
		\P\Big(|Z^{1/2}_{o,x}|>\frac1{16}\log(\delta^{-1})N^{2/3} \quad \text{ for some }x\in \cC^{\delta,\tau}\Big)\leq e^{-c\log(\delta^{-1})^3}.
	\end{equation}
	Indeed, it follows by \eqref{r1} with $r=\frac1{15}\frac1{16}\log(\delta^{-1})$ that
	\begin{equation}
 \begin{aligned}
		&\P\Big(Z^{1/2}_{o,x}>\frac1{16}\log(\delta^{-1})N^{2/3} \quad \text{ for some }x\in \cC^{\delta,\tau}\Big)\\
		&\leq \P\Big(Z^{\rho_+}_{o,x}>\frac1{16}\log(\delta^{-1})N^{2/3} \quad \text{ for some }x\in \cC^{\delta,\tau}\Big) \leq e^{-c\log(\delta^{-1})^3}.
	\end{aligned}
	\end{equation}
A similar bound can be obtained for the lower tail to obtain \eqref{ine6}.
	Using the upper bound in \eqref{thm:LBcoal} and \eqref{ine6} in \eqref{ine7} we obtain the upper bound in Theorem~\ref{thm:coal2}.
\end{proof}
\appendix

\section{An estimate}
\begin{lemma}\label{lem:crw}
	Let $0<\beta<\alpha<1$. Let
	\begin{equation}
	S_n=\sum_{i=1}^{n}X_i
	\end{equation}
	where $\{X_i\}_{i\geq1}$ are i.i.d.\ with law
	\begin{equation}
	X_i\sim\mathrm{Exp}(\alpha)-\mathrm{Exp}(\beta).
	\end{equation}
	Then
	\begin{equation}
	\P\Big(\sup_{i\geq 1}S_i>\lambda\Big)\leq \frac{\beta}{\alpha}e^{-(\alpha-\beta)\lambda}\quad \text{for all $\lambda>0$}.
	\end{equation}
\end{lemma}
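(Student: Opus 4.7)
The idea is to recognize $\sup_{i\geq 1} S_i$ as, in distribution, the stationary waiting time of the M/M/1 queue from Section~5, whose law is already recorded in equation \eqref{des}; the claimed bound is then just the tail integral of that density.

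Match notation by writing $X_i = s_{i-1} - a_i$ with independent $s_j \sim \textrm{Exp}(\alpha)$ and $a_j \sim \textrm{Exp}(\beta)$. Since $0 < \beta < \alpha < 1$, the drift $\E[X_1] = 1/\alpha - 1/\beta$ is strictly negative, so $S_n \to -\infty$ a.s.\ and $M := \sup_{n \geq 0} S_n$ is a.s.\ finite. Let $w_n$ denote the Lindley waiting time $w_n = (w_{n-1} + X_n)^+$ with $w_0 = 0$. The standard time-reversal inside the block $\{1, \ldots, n\}$ (which preserves the joint law of the increments because the $X_i$ are i.i.d.) gives the Spitzer--Lindley identity $w_n \stackrel{d}{=} \max_{0 \leq k \leq n} S_k$. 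Letting $n \to \infty$, the right-hand side increases a.s.\ to $M$ and the left-hand side converges in distribution to the stationary law $w_\infty$ appearing in \eqref{des}. Using $M \geq S_0 = 0$, for every $\lambda > 0$,
\[
\P\!\left(\sup_{i \geq 1} S_i > \lambda\right) = \P(M > \lambda) = \P(w_\infty > \lambda) = \int_\lambda^\infty \frac{(\alpha-\beta)\beta}{\alpha} e^{-(\alpha-\beta)w}\, dw = \frac{\beta}{\alpha} e^{-(\alpha-\beta)\lambda},
\]
which is in fact an equality, stronger than the stated inequality.

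The only nontrivial input is the Lindley--Spitzer duality together with the convergence $w_n \Rightarrow w_\infty$ under negative drift, both of which are classical; I expect no real obstacle. A self-contained alternative that bypasses \eqref{des} is an exponential martingale argument: a direct convolution shows that $X_1$ is the mixture of $\textrm{Exp}(\alpha)$ with probability $\beta/(\alpha+\beta)$ and $-\textrm{Exp}(\beta)$ with probability $\alpha/(\alpha+\beta)$, so the first passage $\tau = \inf\{n \geq 1 : S_n > \lambda\}$ occurs on a positive step and by memorylessness the overshoot $S_\tau - \lambda$ is $\textrm{Exp}(\alpha)$-distributed, independent of $\{\tau < \infty\}$. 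Since $\E[e^{(\alpha-\beta) X_1}] = \tfrac{\alpha}{\beta}\cdot\tfrac{\beta}{\alpha} = 1$, the martingale $e^{(\alpha-\beta) S_n}$ combined with optional stopping gives $\E[e^{(\alpha-\beta) S_\tau}\mathbf{1}_{\tau < \infty}] = 1$, and factoring out $e^{(\alpha-\beta)\lambda}$ together with $\E[e^{(\alpha-\beta)\,\textrm{Exp}(\alpha)}] = \alpha/\beta$ recovers $\P(\tau < \infty) = (\beta/\alpha) e^{-(\alpha-\beta)\lambda}$.
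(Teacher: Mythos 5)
Your proof is correct; in fact the paper states Lemma~\ref{lem:crw} in the appendix without any proof, so there is nothing to compare it against line by line. Your first route is exactly the identification the paper's Section~5 setup invites: with $X_i=s_{i-1}-a_i$, the quantity $\max\bigl(0,\sup_{i\geq1}S_i\bigr)$ is by definition \eqref{waittime} equal in law to the stationary waiting time $w_0$, whose law \eqref{des} the paper records, and integrating the density over $(\lambda,\infty)$ gives $\tfrac{\beta}{\alpha}e^{-(\alpha-\beta)\lambda}$ exactly (you do not even need the Lindley--Spitzer limit argument, since \eqref{waittime} is already the two-sided supremum representation). The only caveat is that this makes the lemma logically dependent on the unproved formula \eqref{des}, whose classical derivation is essentially the random-walk maximum computation itself; your second, martingale argument removes that circularity and is the preferable self-contained proof. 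I checked its ingredients: $\P(X_1>x)=\tfrac{\beta}{\alpha+\beta}e^{-\alpha x}$ for $x\geq0$ gives the exponential overshoot by memorylessness, $\E[e^{(\alpha-\beta)X_1}]=\tfrac{\alpha}{\beta}\cdot\tfrac{\beta}{\alpha}=1$ makes $e^{(\alpha-\beta)S_n}$ a martingale, and the passage to the limit in optional stopping is justified since the martingale is bounded by $e^{(\alpha-\beta)\lambda}$ on $\{\tau>n\}$ and tends to $0$ there by the negative drift. Both routes yield the stated bound with equality.
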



\begin{thebibliography}{10}

\bibitem{BDJ99}
J.~Baik, P.A. Deift, and K.~Johansson, \emph{On the distribution of the length
  of the longest increasing subsequence of random permutations}, J. Amer. Math.
  Soc. \textbf{12} (1999), 1119--1178.

\bibitem{BFP09}
J.~Baik, P.L. Ferrari, and S.~P{\'e}ch{\'e}, \emph{{Limit process of stationary
  TASEP near the characteristic line}}, Comm. Pure Appl. Math. \textbf{63}
  (2010), 1017--1070.

\bibitem{BL17}
J.~Baik and Z.~Liu, \emph{{Multi-point distribution of periodic TASEP}},
  arXiv:1710.03284 (2017).

\bibitem{BR99b}
J.~Baik and E.M. Rains, \emph{The asymptotics of monotone subsequences of
  involutions}, Duke Math. J. \textbf{109} (2001), 205--281.

\bibitem{BR99}
J.~Baik and E.M. Rains, \emph{Symmetrized random permutations}, Random Matrix
  Models and Their Applications, vol.~40, Cambridge University Press, 2001,
  pp.~1--19.

\bibitem{BBS20}
M.~Bal\'azs, O.~Busani, and T.~Sepp{\"a}l{\"a}inen, \emph{Local stationarity of
  exponential last passage percolation}, arXiv:2001.03961 (2020).

\bibitem{BCS06}
M.~Bal{\'a}zs, E.~Cator, and T.~Sepp{\"a}l{\"a}inen, \emph{Cube root
  fluctuations for the corner growth model associated to the exclusion
  process}, Electron. J. Probab. \textbf{11} (2006), 1094--1132.

\bibitem{BG18}
R.~Basu and S.~Ganguly, \emph{Time correlation exponents in last passage
  percolation}, arXiv:1807.09260 (2018).

\bibitem{BGH19}
R.~Basu, S.~Ganguly, and A.~Hammond, \emph{Fractal geometry of airy$_2$
  processes coupled via the airy sheet}, arXiv:1904.01717 (2019).

\bibitem{BGHH20}
R.~Basu, S.~Ganguly, A.~Hammond, and M.~Hedge, \emph{Interlacing and scaling
  exponents for the geodesic watermelon in last passage percolation},
  arXiv:2006.11448 (2020).

\bibitem{BSS17}
R.~Basu, S.~Sarkar, and A.~Sly, \emph{Coalescence of geodesics in exactly
  solvable models of last passage percolation}, Journal of Mathematical Physics
  \textbf{60} (2019), 093301.

\bibitem{BSS14}
R.~Basu, V.~Sidoravicius, and A.~Sly, \emph{Last passage percolation with a
  defect line and the solution of the slow bond problem}, arXiv:1408.3464
  (2014).

\bibitem{BKS85}
{H. van} Beijeren, R.~Kutner, and H.~Spohn, \emph{Excess noise for driven
  diffusive systems}, Phys. Rev. Lett. \textbf{54} (1985), 2026--2029.

\bibitem{BFPS06}
A.~Borodin, P.L. Ferrari, M.~Pr{\"a}hofer, and T.~Sasamoto, \emph{{Fluctuation
  properties of the TASEP with periodic initial configuration}}, J. Stat. Phys.
  \textbf{129} (2007), 1055--1080.

\bibitem{BFS07b}
A.~Borodin, P.L. Ferrari, and T.~Sasamoto, \emph{{Large time asymptotics of
  growth models on space-like paths II: PNG and parallel TASEP}}, Comm. Math.
  Phys. \textbf{283} (2008), 417--449.

\bibitem{Bur56}
P.J. Burke, \emph{The output of a queuing system}, Operations Res. \textbf{4}
  (1956), 699--704.

\bibitem{CP15b}
E.~Cator and L.~Pimentel, \emph{On the local fluctuations of last-passage
  percolation models}, Stoch. Proc. Appl. \textbf{125} (2015), 879--903.

\bibitem{CFS16}
S.~Chhita, P.L. Ferrari, and H.~Spohn, \emph{{Limit distributions for KPZ
  growth models with spatially homogeneous random initial conditions}}, Ann.
  Appl. Probab. \textbf{28} (2018), 1573--1603.

\bibitem{CFP10b}
I.~Corwin, P.L. Ferrari, and S.~P{\'e}ch{\'e}, \emph{{Universality of slow
  decorrelation in KPZ models}}, Ann. Inst. H. Poincar\'e Probab. Statist.
  \textbf{48} (2012), 134--150.

\bibitem{EJS20}
E.~Emrah, C.~Janjigian, and T.~Sepp{\"a}l{\"a}inen, \emph{Right-tail moderate
  deviations in the exponential last-passage percolation}, arXiv:2004.04285
  (2020).

\bibitem{FS18}
W.~Fan and T.~Sepp{\"a}l{\"a}inen, \emph{Joint distribution of {B}usemann
  functions in the exactly solvable corner growth model}, arXiv:1808.09069
  (2018).

\bibitem{FerPhD}
P.L. Ferrari, \emph{Shape fluctuations of crystal facets and surface growth in
  one dimension}, Ph.D. thesis, Technische Universit{\"a}t M{\"u}nchen,
  https://mediatum.ub.tum.de/602022, 2004.

\bibitem{Fer08}
P.L. Ferrari, \emph{{Slow decorrelations in KPZ growth}}, J. Stat. Mech.
  (2008), P07022.

\bibitem{FGN17}
P.L. Ferrari, P.~Ghosal, and P.~Nejjar, \emph{{Limit law of a second class
  particle in TASEP with non-random initial condition}}, Ann. Inst. Henri
  Poincar\'e Probab. Statist. \textbf{55} (2019), 1203--1225.

\bibitem{FO18}
P.L. Ferrari and A.~Occelli, \emph{Time-time covariance for last passage
  percolation with generic initial profile}, Math. Phys. Anal. Geom.
  \textbf{22} (2019), 1.

\bibitem{FS03b}
P.L. Ferrari and H.~Spohn, \emph{Last branching in directed last passage
  percolation}, Markov Process. Related Fields \textbf{9} (2003), 323--339.

\bibitem{Ham72}
J.M. Hammersley, \emph{A few seedlings of research}, Proc. {S}ixth {B}erkeley
  {S}ymp. {M}ath. {S}tatist. and {P}robability (University of~California~Press,
  ed.), vol.~1, 1972, pp.~345--394.

\bibitem{Ham16}
A.~Hammond, \emph{{Brownian regularity for the Airy line ensemble, and
  multi-polymer watermelonsin Brownian last passage percolation}},
  arXiv:1609.02971 (2016).

\bibitem{Ham17}
A.~Hammond, \emph{{Exponents governing the rarity of disjoint polymers in
  Brownian last passage percolation}}, Proc. London Math. Soc. \textbf{120}
  (2020), 370--433.

\bibitem{Jo00b}
K.~Johansson, \emph{Shape fluctuations and random matrices}, Comm. Math. Phys.
  \textbf{209} (2000), 437--476.

\bibitem{Jo00}
K.~Johansson, \emph{Transversal fluctuations for increasing subsequences on the
  plane}, Probab. Theory Related Fields \textbf{116} (2000), 445--456.

\bibitem{Jo03b}
K.~Johansson, \emph{Discrete polynuclear growth and determinantal processes},
  Comm. Math. Phys. \textbf{242} (2003), 277--329.

\bibitem{Jo18}
K.~Johansson, \emph{{The two-time distribution in geometric last-passage
  percolation}}, arXiv:1802.00729 (2018).

\bibitem{JR19}
K.~Johansson and M.~Rahman, \emph{{Multi-time distribution in discrete
  polynuclear growth}}, arXiv:1906.01053 (2019).

\bibitem{KPZ86}
M.~Kardar, G.~Parisi, and Y.Z. Zhang, \emph{Dynamic scaling of growing
  interfaces}, Phys. Rev. Lett. \textbf{56} (1986), 889--892.

\bibitem{KMH92}
J.~Krug, P.~Meakin, and T.~Halpin-Healy, \emph{Amplitude universality for
  driven interfaces and directed polymers in random media}, Phys. Rev. A
  \textbf{45} (1992), 638--653.

\bibitem{LR10}
M.~Ledoux and B.~Rider, \emph{{Small deviations for Beta ensembles}}, Eletron.
  J. Probab. \textbf{15} (2010), 1319--1343.

\bibitem{Pim16}
L.P.R. Pimentel, \emph{Duality between coalescence times and exit points in
  last-passage percolation models}, Ann. Probab. \textbf{44} (2016),
  3187--3206.

\bibitem{Pim18}
L.P.R. Pimentel, \emph{{Local Behavior of Airy Processes}}, J. Stat. Phys.
  \textbf{173} (2018), 1614--1638.

\bibitem{PS00}
M.~Pr{\"a}hofer and H.~Spohn, \emph{Universal distributions for growth
  processes in 1+1 dimensions and random matrices}, Phys. Rev. Lett.
  \textbf{84} (2000), 4882--4885.

\bibitem{PS01}
M.~Pr{\"a}hofer and H.~Spohn, \emph{Current fluctuations for the totally
  asymmetric simple exclusion process}, In and out of equilibrium
  (V.~Sidoravicius, ed.), Progress in Probability, Birkh{\"a}user, 2002.

\bibitem{PS02}
M.~Pr{\"a}hofer and H.~Spohn, \emph{Scale invariance of the {PNG} droplet and
  the {A}iry process}, J. Stat. Phys. \textbf{108} (2002), 1071--1106.

\bibitem{SS19}
T.~Sepp{\"a}l{\"a}inen and X.~Shen, \emph{Coalescence estimates for the corner
  growth model with exponential weights}, arXiv:1911.03792 (2019).

\bibitem{WhittBook}
W.~Whitt, \emph{Stochastic-process limits}, Springer Series in Operations
  Research, Springer-Verlag, New York, 2002, An introduction to
  stochastic-process limits and their application to queues.

\bibitem{Zha19}
L.~Zhang, \emph{Optimal exponent for coalescence of finite geodesics in
  exponential last passage percolation}, arXiv:1912.07733 (2019).

\end{thebibliography}

\end{document}